\documentclass{article}
\usepackage[utf8]{inputenc}
\usepackage[english]{babel}
\usepackage[T1]{fontenc}
\usepackage{lmodern}
\usepackage[a4paper]{geometry}
\usepackage{graphicx}
\usepackage{onlyamsmath}
\usepackage{amsfonts} 
\usepackage{amssymb}
\usepackage{ntheorem}
\usepackage{tikz}
\usepackage{verbatim}

\usepackage[ruled,vlined]{algorithm2e}

\newcommand{\be}{\begin{equation}}
  \newcommand{\ee}{\end{equation}}

\newcommand{\eps}{\varepsilon}

\newcommand{\E}{\mathbb{E}}
\newcommand{\bbS}{\mathbb{S}}

\newcommand{\Pro}{\mathbb{P}}
\newcommand{\R}{\mathbb{R}}

\def\fref#1{{\rm (\ref{#1})}}

\newcommand{\calA}{\mathcal{A}}
\newcommand{\calB}{\mathcal{B}}

\newcommand{\calE}{\mathcal{E}}
\newcommand{\calF}{\mathcal{F}}
\newcommand{\calG}{\mathcal{G}}
\newcommand{\calH}{\mathcal{H}}
\newcommand{\calI}{\mathcal{I}}

\newcommand{\calL}{\mathcal{L}}

\newcommand{\calQ}{\mathcal{Q}}

\newcommand{\calU}{\mathcal{U}}

\setlength\textheight{700pt}
\setlength\textwidth{485pt}
\setlength\topmargin{-50pt}
\oddsidemargin=-16pt

\newtheorem{thm}{Theorem}[section]

\newtheorem{prop}{Proposition}[section]

\newtheorem{lemma}{Lemma}[section]

\theoremstyle{nonumberplain}
\theorembodyfont{\upshape}
\newtheorem{proof}{Proof}

\title{A Monte Carlo Method for 3D Radiative Transfer Equations with Multifractional Singular Kernels}

\author{Christophe Gomez\thanks{Aix Marseille Univ, CNRS, I2M, Marseille, France, christophe.gomez@univ-amu.fr} \and Olivier Pinaud\thanks{Department of Mathematics, Colorado State University, Fort Collins CO, pinaud@math.colostate.edu}}

\begin{document}

\maketitle

\begin{abstract}
  We propose in this work a Monte Carlo method for three dimensional scalar radiative transfer equations with non-integrable, space-dependent scattering kernels. 
  Such kernels typically account for long-range statistical features, and arise for instance in the context of wave propagation in turbulent atmosphere, geophysics, and medical imaging in the peaked-forward regime. In contrast to the classical case where the scattering cross section is integrable, which results in a non-zero mean free time, the latter here vanishes. This creates numerical difficulties as standard Monte Carlo methods based on a naive regularization exhibit large jump intensities and an increased computational cost. We propose a method inspired by the finance literature based on a small jumps - large jumps decomposition, allowing us to treat the small jumps efficiently and reduce the computational burden. We demonstrate the performance of the approach with numerical simulations and provide a complete error analysis.  The multifractional terminology refers to the fact that the high frequency contribution of the scattering operator is a fractional Laplace-Beltrami operator on the unit sphere with space-dependent index.

\end{abstract} 

\begin{flushleft}
\textbf{Key words.} radiative transfer, singular scattering kernels, Monte Carlo method, wave propagation, random media, long-range correlations.
\end{flushleft}

\section{Introduction}

Radiative transfer models have been used for more than a century to describe wave energy propagation through complex/random media \cite{jeans, chandrasekhar}, as well as neutron transport \cite{Lux, Spanier}, heat transfer \cite{viskanta}, and are still an active area of research in astrophysics, geophysics, and optical tomography \cite{louvin, margerin2, noebauer, powell} for instance. In this work, we propose a new Monte Carlo (MC) method to simulate the following radiative transfer equation (RTE) 
\begin{equation}\label{RTE}
\left\{ \begin{split}
\partial_t u + \hat k \cdot \nabla_x u  & = \calQ u ,\\
u(t=0,x,\hat k) & = u_0(x,\hat k),
\end{split}
\qquad (t,x,\hat k)\in (0,\infty)\times \R^3\times \mathbb{S}^2, \right.
\end{equation}
where $\mathbb{S}^2$ denotes the unit sphere in $\R^3$, and $u$ is the wave energy density in the context of wave propagation or a particle distribution function in the context of neutronics. The scattering operator $\calQ$ has the standard form
\begin{equation}\label{defQ}
  (\calQ u)(x,\hat k) = \lambda(x) \int_{\mathbb{S}^2} \Phi(x, |\hat p- \hat k|)  (u(x,\hat p)-u(x,\hat k))\sigma(d \hat p),
  \end{equation}
for $\sigma(d \hat p)$ the surface measure on $\mathbb{S}^2$, $\Phi$ the scattering kernel, and $\lambda>0$ a function modeling the support of the scattering process. Regions where $\lambda(x)=0$ are homogeneous and $u$ undergoes free transport. MC methods have long be used for the resolution of (\ref{RTE}), see e.g. \cite{Lapeyre,Spanier}. The originality and difficulty in our work lies in the fact that we consider situations where the mean free time $t_0$ associated with $\calQ$ vanishes in the scattering regions, that is
\begin{equation}\label{eq:mft}
\frac{1}{t_0(x)} = \lambda(x)\int_{\mathbb{S}^2} \Phi(x,|\hat k - \hat p|) \sigma(d \hat p) = +\infty, \qquad \textrm{where} \quad \lambda(x)>0,\end{equation}
and as a consequence the standard MC representations of $u$ do not apply. Such a scenario arises for instance in the context of highly peaked-forward light scattering in biological tissues and in turbulent atmosphere, or more generally in the context of wave propagation in random media with long-range correlations that we describe below. In this paper we write $\Phi$ as 
\begin{equation}\label{def:rho} 
\Phi(x,|\hat p- \hat k|) := \frac{a(|\hat p- \hat k|)}{|\hat p- \hat k|^{2+\alpha(x)}} = \frac{1}{2^{1+\alpha(x)/2}} \rho(x,\hat k \cdot \hat p), \qquad\text{with}\qquad \rho(x,s) := \frac{a\big(\sqrt{2(1-s)}\big)}{(1 - s )^{1+\alpha(x)/2}}\qquad s\in [-1,1).
\end{equation}
Above, $\alpha :\R^3 \longrightarrow [0,2)$ accounts for the slow variations of scattering across the ambient space, and $a$ is a smooth bounded function characterizing some statistical properties of the medium and such that $a(0)>0$. Practical examples are given further. A direct calculation shows that \fref{eq:mft} holds when $\alpha \in [0,2)$. Also, the integral in \fref{defQ} has to be understood in the principal value sense when $\alpha \in [1,2)$, see \cite{gomez2}. The multifractional terminology that we use is motivated by the fact that the unbounded operator $\calQ$ can be expressed as a (multi)-fractional Laplace-Beltrami operator $(-\Delta_{\bbS^2})^{\alpha(x)/2}$ on the unit sphere up to a bounded operator w.r.t. the $\hat k$ variable \cite{gomez1, gomez2}.

We would like to emphasize that we focus in this work on kernels of the form \fref{def:rho} for simplicity of the exposition, and that our method applies, after proper decomposition (see \cite{gomez2}), to more general kernels that behave like \fref{def:rho} at the singularity.

The RTE can be derived from high frequency wave propagation in random media, see e.g. \cite{Ryzhik}. In such a context, the velocity field $c(x)$ has the form
\[
\frac{1}{c^2(x)} = \frac{1}{c^2_0}\Big( 1 +  \sqrt{\eta} \, V_0\Big(x , \frac{x}{\eta}\Big) \Big)\qquad x\in\mathbb{R}^3,\quad\eta \ll 1,
\]
where $c_0$ is the background velocity (that we set to one in the sequel for simplicity), $V_0$ is a mean zero random field modeling fluctuations around the background, and $\eta$ is the correlation length of the random medium, assumed to be small after proper rescaling. The first variable in $V_0$ represents the slow variations of the random perturbations, while the second one corresponds to their high frequency oscillations. The latter are responsible for the strong interaction between the wave and the medium over sufficient distances. The scattering kernel $\Phi$ is related to the correlation function of $V_0$, and assuming $V_0$ is stationary (in the statistical sense) with respect to the fast variable, a kernel of the form (\ref{def:rho}) can be obtained from random fields such that
\begin{equation}\label{eq:cov_media}
\E[V_0(x, x')V_0(y,y')] = \sqrt{\lambda(x)\lambda(y)} \int_{\R^3} \frac{a(|p|)}{|p|^{1+\frac{\alpha(x) + \alpha(y)}{2}}} e^{ip\cdot (x'-y')}dp,
\end{equation}
with $\alpha$ ranging from $0$ to $2$. Denoting by $R(x)$ the expectation in \fref{eq:cov_media} with $y=x$, $y'=x'+x/\eta$, one can show that $R$ behaves like $|x|^{\alpha(x)-2}$ for $|x| \gg 1$, and is therefore not integrable. This is how random fields with long-range correlations are defined, as opposed to random fields with short-range correlations that exhibit an integrable correlation function. This approach is of practical interest in biomedical imaging as media with long-range correlations are able to reproduce experimentally observed power-law attenuations associated with effective fractional wave equations \cite{garnier, gomez}. The value of the exponents is related to the rate of decay of the correlation function $R$, and depends on the nature of the imaged tissues as reported in \cite{duck, goss78, goss80}. Variations of this exponent can then be used for diagnosis purposes \cite{lin, rau}.

\begin{figure}[h!]
\begin{center}
\includegraphics[scale=0.3]{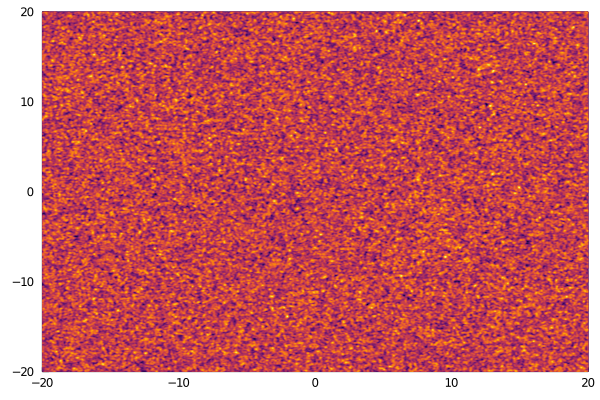} 
\includegraphics[scale=0.3]{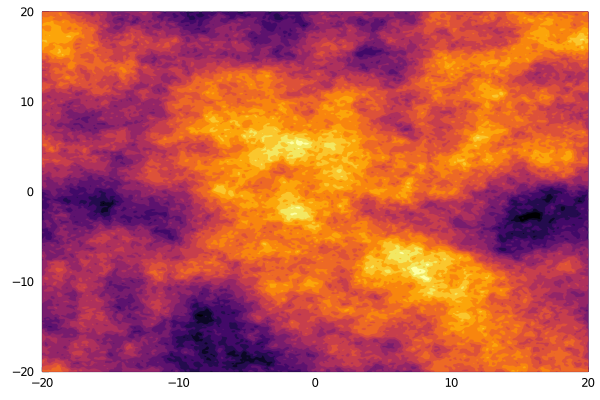} 
\\

\includegraphics[scale=0.3]{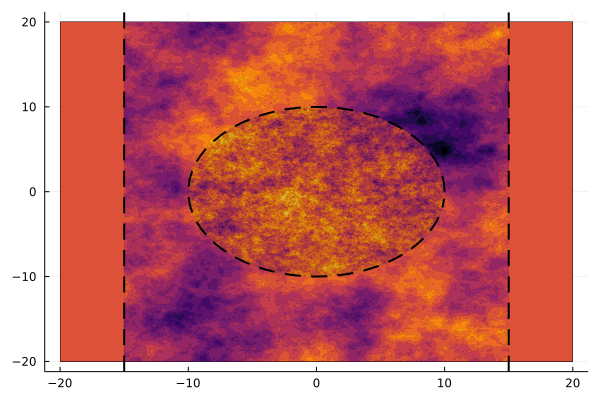} 
\includegraphics[scale=0.3]{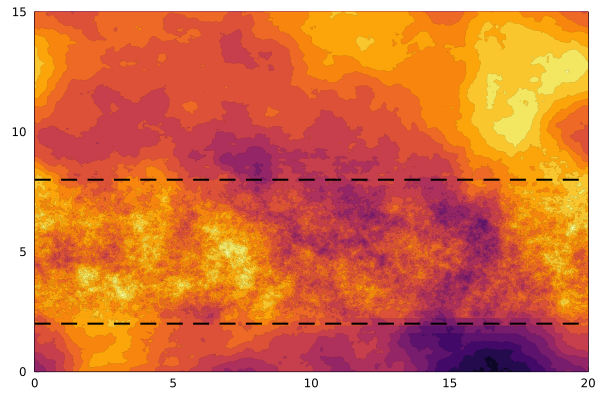} 
\end{center}
\caption{\label{fig:random_media} Realizations of Gaussian random fields. The upper-left picture represents a field with short-range correlations, while the upper-right picture depicts a field with long-range correlations with $\lambda \equiv 1$ and $\alpha \equiv 1$. In the lower-left picture, we have $\lambda = \mathbf{1}_{\{|x_1|<15\}}$, and $\alpha=0.1 \cdot \mathbf{1}_{\{|x|\leq 10\}} + 1 \cdot \mathbf{1}_{\{10<|x|\}}$. In the lower-right, we have $\lambda \equiv 1 $ and $\alpha(x_2) = 5/3 \cdot \mathbf{1}_{\{x_2\leq 2\}} + 0.5 \cdot \mathbf{1}_{\{2<x_2\leq 8\}}+ 1.9\cdot \mathbf{1}_{\{8<x_2\}}$.}
\end{figure}
In Figure \ref{fig:random_media}, we provide  examples of such 2D random fields. The top-left picture represents a random medium with short-range correlations (with a standard Gaussian covariance kernel), while the top-right picture illustrates a random medium with long-range correlations with $\alpha\equiv 1$. Because of the singularity at $p=0$, one can observe significantly larger statistical patterns than in the short-range case. 
In the bottom two pictures, we highlight the roles of $\lambda$ and $\alpha$: $\lambda$ characterizes scattering regions, and $\alpha$ defines the correlation structure. In the inner circle of the bottom-left picture we have $\alpha\equiv 0.1$, which tends to create shorter range fluctuations than in the outside where $\alpha\equiv 1$. In the bottom-right picture, we have a three-layer model for $\alpha$ in which the inner band exhibits smaller statistical patterns than the outer ones. This type of model is used for modeling non-Kolmogorov atmospheric turbulences, while standard atmospheric turbulence is modeled with the so-called Kolmogorov power spectrum 
\[
\Phi(|k|) \propto \frac{a(|k|)}{|k|^{11/3}}, 
\]
for $|k|$ in the inertial range of turbulence. This corresponds to the case $\alpha = 5/3$. This case is not always valid in experiments as reported in \cite{belenkii, stribling, zilberman}, and the statistics of atmospheric turbulence have been shown to vary with altitude. Models have been derived for instance (see \cite{korotkova} for a review) by considering three ranges (0-2km, 2-8km, and above 8km) with distinct power laws (see Figure \ref{fig:turb} for an illustration).

In the context of biological tissues, the following the Gegenbauer scattering kernel $\rho_G$ and Henyey-Greenstein (HG) kernel $\rho_{HG} $ are commonly used in the peaked-forward regime \cite{Henyey, Reynolds}:
\be \label{HG}
\rho_G(x,s) := \frac{\alpha \,g \, (1 + g^2 - 2g \, s)^{-1-\alpha/2} }{2\pi ( (1-g)^{-\alpha} - (1+g)^{-\alpha})},\qquad \rho_{HG}(x,s) := \frac{1}{4\pi} \frac{1-g^2}{(1+g^2 - 2g \, s)^{3/2}}.
\ee
The parameter $g\in(-1,1)$ is called the anisotropy factor, and $\rho_{HG}$ is obtained by setting $\alpha\equiv 1$ in $\rho_G$. The case $g=0$ corresponds to isotropic energy transfer over the unit sphere, $g<0$ to dominant transfer in the backward direction, and $g>0$ to forward energy transfer. The peaked forward regime is obtained in the limit $g\to 1$, for which 
\begin{equation}\label{eq:lim_HG}
\frac{1}{(1-g)^\alpha}\rho_G(x,\hat k \cdot \hat p) \underset{g\to 1}{\sim} \frac{\alpha}{2\pi(2-2\hat k \cdot \hat p)^{1+\alpha/2}} = \frac{\alpha}{2\pi |\hat k - \hat p|^{2+\alpha}}.
\end{equation}

The case $\alpha\equiv 1$ for the HG kernel is widely used in photon scattering in biological tissues \cite{Dehaes, Germer, Jacques}. A typical realization of the corresponding random field in 2D as $g\to 1$ is depicted in the top-right panel of Figure \ref{fig:random_media}. 

There exist a variety of methods for the resolution of \fref{RTE} that handle the singular nature of the HG kernel, see e.g. \cite{Fujii, zhao, Kim, Kim2, Leakeas}. They are based on finite differences type discretizations, projections over appropriate bases w.r.t. the $\hat{k}$ variable, and approximations of the kernel. Here we propose an alternative approach to handle singular scattering kernel \eqref{def:rho} that is based on a MC method. The latter are popular choices for the simulation of the RTE when the kernel is smooth, see e.g. \cite{Lapeyre,margerin, margerin1, Przybilla, Spanier}, essentially for their adaptability to a wide range of configurations and their simplicity of implementation. A downside is their slow convergence rate, and there is a vast literature on variance reduction techniques for acceleration. In this work, we focus on the design of an efficient MC method and postpone any variance reduction considerations to future works.



Our approach is based on an adaptation of a method proposed by Asmussen-Cohen-Rosi\'nski \cite{Asmussen, Cohen} (ACR) for the simulation of L\'evy processes with infinite jump intensity. It relies on a \emph{small jumps/large jumps} decomposition of the corresponding infinitesimal generator. The main idea is to approximate the generator of the \emph{small-jump} part, which possesses the infinite intensity due to the singularity of kernel, by a Laplace-Beltrami operator (with respect to the angular variables) on the unit sphere $\mathbb{S}^2$.  This requires us to simulate paths of a jump-diffusion process over the unit sphere. For this purpose, we use the characterization of Brownian motion on the unit sphere given in \cite{Berg} based on a standard stochastic differential equation (SDE) in $\mathbb{R}^3$ that is suitable for space-dependent kernels. This situation is hence more involved than the 2D case we investigated in \cite{gomez3} where the small jumps part can be approximated by Brownian motion on the unit circle for which analytical expressions are available. Note that, as shown in \cite{gomez3}, neglecting small jumps altogether in order to use standard MC methods leads to large errors, and reducing those comes at significantly increased computational cost.


Denoting by $\hat \mu(u)$ the estimator produced by our MC method for some observable $\mu(u)$ built on the solution $u$ to \eqref{RTE}, we provide an error estimate of the form
\[\mathbb{P}\Big(|\hat \mu(u)-\mu(u)|> E_1 + E_2 + E_3\Big) \ll 1\]
as a theoretical support of our method. Above, $E_1$, $E_2$, and $E_3$ are small terms characterizing the various approximation errors from the original model: the Laplace-Beltrami (i.e. small jumps) approximation, the discretization error of the diffusion process over the unit sphere, and the MC error. 
Note that the method we propose here applies directly to the stationary version of \fref{RTE} 
\[
\hat k \cdot \nabla_x \mathbf{u}  - \calQ \mathbf{u} = u_0,\qquad (x,\hat k) \in \mathbb{R}^3 \times \mathbb{S}^2,
\]
with source term $u_0$, through the relation 
\[
\mathbf{u} (x,\hat k) := \int_0^\infty u(t, x, \hat k) dt.
\]

The paper is organized as follows. In Section \ref{sec:prob_rep}, we introduce probabilistic representations for \eqref{RTE} and its approximation based on the ACR method. In Section \ref{sec:MC_meth}, we describe our MC method, state the main theoretical result regarding the overall approximation error, and detail the simulation algorithms. Section \ref{sec:test_case} is dedicated to the validation of the method using semi-analytical solutions. Numerical illustrations are given in Section \ref{sec:numilla}, where we investigate the role of the strength $\alpha$ of the singularity, both when constant or space-dependent in the case of non-Kolmogorov turbulence, and compare with solutions for the HG kernel. Section \ref{secproofs} is devoted to the proofs of our main results and we recall in an Appendix the stochastic collocation method.

The numerical simulations are performed using the Julia programming language (v1.6.5) on a NVIDIA Quadro RTX 6000 GPU driven by a 24 Intel Xeon Sliver 2.20GHz CPUs station. The codes have been implemented using the CUDA.jl library \cite{besard1, besard2}. 

\paragraph{Acknowledgment.} OP acknowledges support from NSF grant DMS-2006416. 

\section{Probabilistic representations and approximation}\label{sec:prob_rep}

\subsection{Representation for \fref{RTE}}
The starting point is the following standard probabilistic interpretation to \eqref{RTE}:
  \[
u(t,x,k) = \E_{x,\hat k}\big[u_{0}( D(t) )  \big]:=\E\big[u_{0}( D(t) )\,|\, D(0) = (x,\hat k) \big],
\]
where $D = (X, K)$ is a Markov process on $\mathbb{R}^3 \times \mathbb{S}^2$ with infinitesimal generator
\[
\calL f(x, \hat k) := -\hat k \cdot \nabla_x f(x, \hat k) + \frac{\lambda(x)}{2^{1+\alpha(x)/2}}\int_{\bbS^2} \rho(x, \hat p \cdot \hat k) \big(f(x, \hat p)-f(x, \hat k)\big)\sigma(d\hat p).
\]
A path, or a realization, of the Markov process $D$ is often referred to as a \emph{particle trajectory}. The $X$ component of $D$ represents the position of a particle, and the component $K$ its direction. The generator $\calL$ comprises two terms, the transport part describing free propagation of the particle, and the scattering operator (often referred to as the jump part in the probabilistic literature) describing the evolution of its direction. The jump component exhibits a non-integrable singularity leading to a infinite jump intensity and a vanishing mean free time as expressed in \fref{eq:mft}.

Note that when $\lambda$ and $\alpha$ are constant, it is shown in \cite{gomez2} that the solution $u$ is unique and infinitely differentiable in all variables for $t>0$ for any square integrable initial condition. When $\lambda$ and $\alpha$ are infinitely differentiable with bounded derivatives at all orders, this result remains valid and we will assume throughout this work that $u$ is smooth. The same applies to the function $u_\eps$ defined further in Proposition \ref{prop:approx_RTE}.

In order to adapt the ACR method, we introduce the following small region over which the singularity of the kernel $\rho$ (in \eqref{def:rho}) is not integrable, resulting in an unbounded infinitesimal generator $\calL$:
\begin{equation}\label{def:Seps}
S^\eps_{<} = S^\eps_{<}(\hat k):=\{\hat p \in \bbS^2:\quad 1 - \hat p\cdot\hat k < \eps\}\qquad \eps\in(0,1).
\end{equation}
We can now decompose the jump part of the generator $\calL$ into two components
\[\begin{split}
\calL f(x,\hat k) &= - \hat k \cdot \nabla_x f(x,\hat k) + \calL^\eps_{<} f(x,\hat k) + \calL^\eps_{>} f(x,\hat k)\\
& := - \hat k \cdot \nabla_x f(x,\hat k) + \frac{\lambda(x)}{2^{1+\alpha(x)/2}}\left(\int_{S^\eps_{<}} + \int_{S^\eps_{>}} \right)\rho(x, \hat p \cdot \hat k)\big(f(x, \hat p)-f(x, \hat k)\big)\sigma(d\hat p),
\end{split}\]
where $S^\eps_{>} = (S^\eps_{<})^c$ is the complementary set of region \eqref{def:Seps} over the unit sphere. The part of the scattering operator involving $S^\eps_{>}$ (with no singularity) is the infinitesimal generator of a standard jump Markov process. Regarding $S^\eps_{<}$ (with the singularity), the following result justifies the approximation of this singular part by a Laplace-Beltrami operator $\Delta_{\mathbb{S}^2}$ over the unit sphere $\mathbb{S}^2$. 
We will use the notation $r'_\eps=\sqrt{1-(1-\eps)^2}/(2-\eps)$ in what follows, and set in the rest of the paper $0<\eps\leq \eps_0<1$ and $0 \leq \alpha_m\leq \alpha(x) \leq \alpha_M<2$. 

\begin{prop}\label{prop:approx_RTE} Let $u$ be the solution to \eqref{RTE} and $u_\eps$ be the solution to  
\begin{equation}\label{RTE_approx}
\left\{ \begin{split}
\partial_t u_\eps + \hat k \cdot \nabla_x u_\eps  & = \sigma^2_{\eps}(x) \Delta_{\mathbb{S}^2} u_\eps + \frac{\lambda(x)}{2^{1+\alpha(x)/2}} \int_{S^\eps_{>}} \rho(x, \hat p \cdot \hat k) (u_\eps(\hat p)-u_\eps(\hat k))\sigma(d \hat p),\\
u_\eps(0,x,\hat k) & = u_{0}(x,\hat k),
\end{split}\right.
\end{equation}
for $(t,x,\hat k)\in (0,\infty)\times \R^3\times \mathbb{S}^2$, where
\begin{equation}\label{def_sig}
\sigma^2_{\eps}(x) := \frac{2^{1-\alpha(x)} a(0) \pi \lambda(x)}{2-\alpha(x)} {r'_\eps}^{2-\alpha(x)}.
\end{equation}
Assuming $a'(0)=0$, for any $T>0$, we have
\begin{equation}\label{eq:err_bound}
 \sup_{t\in [0,T]}\| u(t,\cdot,\cdot) - u_\eps(t,\cdot,\cdot) \|_{L^2(\mathbb{R}^3 \times \mathbb{S}^2)}  \leq  \eps^{2-(\alpha_M/2)} \, \sqrt{2T}\, E(u)
\end{equation}
where $E(u)$ is defined in \fref{eq:def_C_eps}.
\end{prop}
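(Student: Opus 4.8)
The plan is to compare the two evolutions directly through an $L^2$ energy estimate on their difference, with the discrepancy between the singular operator $\calL^\eps_{<}$ and the Laplace--Beltrami term $\sigma^2_\eps(x)\Delta_{\mathbb{S}^2}$ playing the role of a source. Writing $\|\cdot\|$ for the norm on $L^2(\R^3\times\bbS^2)$, set $w:=u-u_\eps$. Since $\partial_t u=-\hat k\cdot\nabla_x u+\calL^\eps_{<}u+\calL^\eps_{>}u$, subtracting \fref{RTE_approx} and adding and subtracting $\sigma^2_\eps(x)\Delta_{\mathbb{S}^2}u$ shows that $w$ solves
\[
\partial_t w + \hat k\cdot\nabla_x w = \sigma^2_\eps(x)\Delta_{\mathbb{S}^2}w + \calL^\eps_{>}w + F_\eps,\qquad w(0,\cdot,\cdot)=0,
\]
where the source is the consistency error $F_\eps:=\calL^\eps_{<}u-\sigma^2_\eps(x)\Delta_{\mathbb{S}^2}u$, and $w(0,\cdot,\cdot)=0$ because both problems share the datum $u_0$.

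First I would establish the energy inequality by testing the equation for $w$ against $w$. The transport term $-\hat k\cdot\nabla_x$ is skew-adjoint and contributes nothing after integration by parts in $x$; the term $\sigma^2_\eps(x)\Delta_{\mathbb{S}^2}$ contributes $-\int\sigma^2_\eps(x)|\nabla_{\bbS^2}w|^2\le 0$ since $\sigma^2_\eps\ge 0$; and the jump operator $\calL^\eps_{>}$, whose kernel $\rho(x,\hat p\cdot\hat k)$ is symmetric in $(\hat p,\hat k)$ and nonnegative on the symmetric domain $\{1-\hat p\cdot\hat k\ge\eps\}$, satisfies $\langle\calL^\eps_{>}w,w\rangle\le 0$ after symmetrizing $\hat p\leftrightarrow\hat k$. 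Hence $\frac12\frac{d}{dt}\|w\|^2\le\langle F_\eps,w\rangle\le\|F_\eps\|\,\|w\|$, which gives $\|w(t)\|\le\int_0^t\|F_\eps(s)\|\,ds$. A Cauchy--Schwarz inequality in time bounds this by $\sqrt{t}\,(\int_0^t\|F_\eps\|^2)^{1/2}$; gathering the $u$-dependent constants of the consistency estimate below with the factor from the time integration into the quantity $E(u)$ of \fref{eq:def_C_eps} produces the stated $\sqrt{2T}$ factor.

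The heart of the argument, and the step I expect to be the main obstacle, is the pointwise consistency estimate $|F_\eps(x,\hat k)|\lesssim\eps^{2-\alpha(x)/2}$ times a norm controlling the fourth-order angular derivatives of $u$, after which $\|F_\eps\|\le\eps^{2-\alpha_M/2}(\cdots)$ follows from $\eps\le\eps_0<1$, $\alpha(x)\le\alpha_M$, and integration in $(x,\hat k)$. I would expand $u(x,\cdot)$ in geodesic normal coordinates about $\hat k$, so that on the geodesic cap $S^\eps_{<}$ (of angular radius $\theta\sim\seps$) one has $u(\hat p)-u(\hat k)=\langle\nabla_{\bbS^2}u,V\rangle+\frac12\,\mathrm{Hess}\,u[V,V]+O(|V|^3)$ with $|V|\sim\theta$ and $\rho\sim 2^{1+\alpha/2}a(0)\,\theta^{-2-\alpha}$ near the singularity. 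The linear term is odd in $V$ and integrates to zero over the symmetric cap against the radial kernel---this is where the principal value interpretation is required when $\alpha(x)\ge1$---while averaging $V\otimes V$ over the isotropic cap reduces the quadratic term, with the $a(0)$ part of $\rho$, to exactly $\sigma^2_\eps(x)\Delta_{\mathbb{S}^2}u$: this is precisely the second-moment identity that defines $\sigma^2_\eps$ in \fref{def_sig}, so it cancels the subtracted term. What remains is genuine error: the cubic term vanishes by the same symmetry, the quartic remainder yields $\int_{S^\eps_{<}}\rho\,|V|^4\,\sigma(d\hat p)\sim\eps^{2-\alpha/2}$, and the correction $a(\theta)-a(0)=a'(0)\theta+O(\theta^2)$ in the quadratic term contributes $\int_{S^\eps_{<}}(a(\theta)-a(0))\,\theta^{-2-\alpha}|V|^2\,\sigma(d\hat p)$, whose leading piece is $O(\eps^{3/2-\alpha/2})$.

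This last $O(\eps^{3/2-\alpha/2})$ contribution is of larger order than the target and is exactly what the hypothesis $a'(0)=0$ removes: with $a'(0)=0$ the correction becomes $O(\eps^{2-\alpha/2})$, matching the quartic remainder. The delicate points are thus geometric---carrying out the Taylor expansion on the curved sphere (the normal-coordinate Jacobian being even in $V$ and perturbing only at the error order), checking that the second moment of the singular kernel over the cap reproduces $\sigma^2_\eps$ exactly and not merely to leading order, and tracking which orders survive the symmetric integration---together with the bookkeeping of $\eps$-powers showing that the error is genuinely $\eps^{2-\alpha_M/2}$ rather than the a priori worse $\eps^{3/2-\alpha_M/2}$.
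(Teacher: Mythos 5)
Your proposal follows essentially the same route as the paper's proof: the identical energy estimate for the difference $w=u-u_\eps$ (skew-adjoint transport, nonpositivity of $\sigma^2_\eps\Delta_{\mathbb{S}^2}$, symmetrization of $\calL^\eps_{>}$, then Cauchy--Schwarz in time giving the $\sqrt{2T}$ factor), followed by a fourth-order Taylor consistency bound on $\calL^\eps_{<}u-\sigma^2_\eps\Delta_{\mathbb{S}^2}u$ in which the odd terms cancel by symmetry of the cap, the quartic remainder is $O(\eps^{2-\alpha_M/2})$, and $a'(0)=0$ eliminates the larger $O(\eps^{3/2-\alpha/2})$ contribution---the paper merely flattens the cap with the retraction $R_{\hat k}(h)=(\hat k+h)/|\hat k+h|$ onto the tangent plane rather than geodesic normal coordinates. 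One small caveat: the second moment of the $a(0)$ part of the kernel does not reproduce $\sigma^2_\eps$ \emph{exactly} in either chart (the paper's $\tilde D_2$ differs from $\sigma^2_\eps$ by Jacobian/kernel corrections), but the mismatch is itself $O(\eps^{2-\alpha_M/2})\,\|\Delta_{\mathbb{S}^2}u\|$, which is exactly why $E(u)$ in \fref{eq:def_C_eps} carries a $\|\Delta_{\mathbb{S}^2}f\|$ term, so your error budget still closes.
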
 

The proof of Proposition \ref{prop:approx_RTE} is postponed to Section \ref{proof:prop_approx_RTE}. The term $E(u)$ is independent of $\eps$ and depends on derivatives of $u$ w.r.t. $\hat k$ up to order 4. Note that the error is of order $\eps^{1-(\alpha_M/2)}/(2-\alpha_M)$ when $a'(0)\neq 0$ yielding a less accurate approximation than for $a'(0)=0$. The difference comes from a truncated expansion along the sphere curvature providing an extra order in $\eps$ assuming $a'(0)=0$. This later assumption holds throughout the remaining of the paper. Based on \fref{eq:err_bound}, we then devise a MC method for \fref{RTE_approx} instead of \fref{RTE}. The advantage in using \fref{RTE_approx} is the fact that the  angular diffusion term $\sigma^2_{\eps}(x) \Delta_{\mathbb{S}^2}$ is the generator of a Markov process that can be easily simulated. Indeed, for $W$ a standard 3D Brownian motion on $\mathbb{R}^3$ and $\times$ the cross product in $\mathbb{R}^3$, it is shown in \cite{Berg} that the process $B$ solving the SDE
\[
dB = B \times dW - B dt, \qquad B(0)\in \mathbb{S}^2,
\]
has generator $\frac{1}{2} \Delta_{\mathbb{S}^2}$. A simple adaptation then gives the desired diffusion coefficient. Since the error is of order $\eps^{2-(\alpha_M/2)}$, it is always smaller than $\eps$, and can be adjusted to obtain a desired accuracy. Note also that $\sigma_{\eps}(x)$ increases as $\alpha(x)$ gets to $2$, and diffusion on the sphere eventually becomes the dominant dynamics.

\subsection{Representation for \eqref{RTE_approx}}

We interpret \fref{RTE_approx} as the forward Kolmogorov equation of an appropriate Markov process, and as a consequence focus on \textit{forward} MC methods, see e.g. \cite{Lapeyre} for terminology. Backward equations are simulated in a similar manner, and can be combined with forward methods for variance reduction techniques \cite{Blanc, Lux, Spanier}.

The Markov process we consider for this approach is defined by
\begin{equation}\label{def_Z}
D_\eps(t) := \sum_{n\geq 0} \mathbf{1}_{[T_n, T_{n+1})}(t) \psi^{Z_{n}}_n(t-T_n)\qquad t\geq 0,
\end{equation}
where (in the remaining of the paper we extensively make use of the notation $z=(x,\hat k)$):
\begin{enumerate}
\item The flow $\psi^{z}_n = (X^{x}_{n}, K^{\hat k}_{n})$ is the unique strong solution to the SDE 
\begin{equation}\label{diff_eq}
\left\{ \begin{array}{rcl}
dX^{x}_{n}(t) & = & K^{\hat k}_{n}(t)\, dt \\ 
dK^{\hat k}_{n}(t) & = & \sqrt{2}\, \sigma_{\eps}(X^{x}_{n}(t))\, K^{\hat k}_{n}(t) \times dW_n(t) - 2\,\sigma_{\eps}^2(X^{x}_{n}(t))\, K^{\hat k}_{n}(t) \, dt,
\end{array}
\right.
\end{equation}
where $\psi^{z}_n(0) = z$, $\times$ is the cross product in $\mathbb{R}^3$, $(W_n)_n$ is a sequence of independent standard Brownian motions on $\mathbb{R}^3$, and $\sigma_{\eps}$ is defined by \eqref{def_sig}.
\item The jump times $(T_n)_n$ are distributed according to
\[
\mathbb{P}\big(T_{n+1} -T_{n} > t\,| \, D_\eps(T_{n}) = z,\, (\psi^{z}_n(s))_{s\in[0,t]}\big) = \exp\Big( - \int_0^t \Lambda_\eps(\psi^{z}_n(s)) ds   \Big), \qquad \forall n \geq 0,
\]
with $T_0=0$, and for $\rho$ given by \eqref{def:rho},
\begin{equation}\label{eq:Lambda}
 \Lambda_\eps(z) :=  \frac{\lambda(x)}{2^{1+\alpha(x)/2}} \int_{S^\eps_{>}} \rho(x, \hat p \cdot \hat k ) \sigma(d\hat p).
\end{equation}

\item The jumps $(Z_n)_n$ describe a Markov chain with transition probability
\begin{equation}\label{def_jump}
\mathbb{P}(Z_{n+1}\in dy \otimes \sigma(d\hat p) \,\vert Z_n, T_{n+1} - T_n ) = \Pi_\eps( z_{n+1} , \, dz ),
\end{equation}
where $z_{n+1} := \psi^{Z_{n}}_n(T_{n+1}-T_n)$, and
\begin{equation}\label{def_Pi}
\Pi_\eps(z, dz) := \pi_\eps(z, \hat p) \sigma(d\hat p)\delta_{x}(dy),
\end{equation}
with density
\begin{equation}\label{def_pi}
\pi_\eps(z, \hat p) := \frac{\rho(x, \hat p \cdot \hat k )}{\int_{S^\eps_{>}} \rho(x, \hat p' \cdot \hat k )d\sigma(\hat p') } \mathbf{1}_{S^\eps_{>}}(\hat p), \qquad z=(x,\hat k),
\end{equation}
which is supported over $S^\eps_{>}$. The above Dirac mass $\delta_x(dy) := \delta(x-y)dy$ translates the fact that the jumps only hold w.r.t. the $\hat k$ variable.
\end{enumerate}

Let us note that the above family of standard Brownian motions $(W_n(t))_{t\in[0, T_{n+1}-T_n]}$ can be defined as
\[
W_n(t) = W(t+T_n) - W(T_n),\qquad t\in[0, T_{n+1}-T_n],
\]
for any $n$, where $W$ is a single standard Brownian motion on $\mathbb{R}^3$. We have then the following probabilistic representation for the solution to \eqref{RTE_approx}.

\begin{prop}\label{rep_prob2} The Markov process $D_\eps$ defined in \fref{def_Z} has for infinitesimal generator
  \[
    \calA_\eps g(z)  := \hat k \cdot \nabla_x g(z) + \sigma^2_{\eps}(x)\Delta_{\mathbb{S}^2} g(z) + \Lambda_\eps(z) \int_{\mathbb{S}^{2}} \pi_\eps(z, \hat p) \big(g(x, \hat p)-g(x, \hat k)\big)\sigma(d\hat p),
    \]
and we have
\begin{equation}\label{eq:prob_rep2}
\mathbb{P}_{\mu_0}( D_\eps(t) \in dx\otimes \sigma(d\hat k) ) = \frac{1}{\bar u_0} u_\eps(t, x, \hat k)\,dx\, \sigma(d\hat k),
\end{equation}
where 
\begin{equation}\label{eq:def_mu0}
 \mu_0(dx, d\hat k) := \mathbb{P}( D_\eps(0) \in dx\otimes \sigma(d\hat k) )= \frac{u_{0}(x,\hat k) }{\bar u_0}\,dx\, \sigma(d\hat k) \qquad\text{with}\qquad \bar u_0 := \int_{\mathbb{R}^3\times \mathbb{S}^2} u_{0}(x,\hat k) \, dx \,\sigma(d\hat k).
 \end{equation}
\end{prop}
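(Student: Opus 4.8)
The plan is to proceed in two stages, matching the two assertions of the statement. First I would show that $D_\eps$ is a well-defined, non-exploding Markov process whose infinitesimal generator is $\calA_\eps$; then I would show that its one-time marginal, started from $\mu_0$, solves the weak form of the forward Kolmogorov equation associated with $\calA_\eps$, and identify that equation with \eqref{RTE_approx} by computing the formal adjoint $\calA_\eps^\ast$. The representation \eqref{eq:prob_rep2} then follows from the uniqueness and regularity of solutions to \eqref{RTE_approx} assumed earlier, together with the normalization $\bar u_0$ built into $\mu_0$.

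For the first stage, the crucial point is that the singularity of $\rho$ has been excised: on $S^\eps_{>}$ the kernel $\rho(x,\cdot)$ is bounded, so $\Lambda_\eps$ in \eqref{eq:Lambda} is bounded uniformly, the jump times $(T_n)$ cannot accumulate, and $D_\eps$ is a genuine piecewise-diffusion process defined for all $t\geq 0$. Its generator splits as the generator of the driving diffusion plus that of the jump mechanism. For the diffusion part I would rely on \cite{Berg}: the solution of $dB = B\times dW - B\,dt$ with $B(0)\in\mathbb{S}^2$ has generator $\frac12\Delta_{\mathbb{S}^2}$. An It\^o computation on $|K^{\hat k}_n|^2$ using \eqref{diff_eq} shows $d|K^{\hat k}_n|^2=0$, the drift $-2\sigma_\eps^2 K\,dt$ cancelling the quadratic-variation term $4\sigma_\eps^2|K|^2\,dt$, so $K^{\hat k}_n$ stays on $\mathbb{S}^2$; rescaling the noise by $\sqrt2\,\sigma_\eps$ turns the second-order part into $2\sigma_\eps^2\cdot\frac12\Delta_{\mathbb{S}^2}=\sigma_\eps^2(x)\Delta_{\mathbb{S}^2}$, while $dX=K\,dt$ contributes the transport term $\hat k\cdot\nabla_x$. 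The jump part is the standard generator $\Lambda_\eps(z)\int(g(x,\hat p)-g(x,\hat k))\pi_\eps(z,\hat p)\sigma(d\hat p)$ of a finite-intensity jump process with rate $\Lambda_\eps$ and post-jump law $\pi_\eps$. Adding these, and justifying the passage to the limit via Dynkin's formula (conditioning on the event that no jump occurs on $[0,t]$, whose complement has probability $\sim\Lambda_\eps(z)t$), yields $\calA_\eps$.

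For the second stage, Dynkin's formula with initial law $\mu_0$ gives $\frac{d}{dt}\int g\,dp_t=\int\calA_\eps g\,dp_t$ for smooth test functions $g$, where $p_t$ is the law of $D_\eps(t)$. Writing $p_t(dx,d\hat k)=q(t,x,\hat k)\,dx\,\sigma(d\hat k)$ and transposing $\calA_\eps$ onto $q$ shows that $q$ solves $\partial_t q=\calA_\eps^\ast q$ weakly. It then remains to identify $\calA_\eps^\ast$ with the right-hand side of \eqref{RTE_approx}. The transport adjoint is $-\hat k\cdot\nabla_x$; the operator $\sigma_\eps^2(x)\Delta_{\mathbb{S}^2}$ is self-adjoint on $L^2(dx\,\sigma(d\hat k))$ since $\Delta_{\mathbb{S}^2}$ is self-adjoint and $\sigma_\eps^2$ depends on $x$ only. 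For the jump part, the key observation is the identity $\Lambda_\eps(z)\pi_\eps(z,\hat p)=\frac{\lambda(x)}{2^{1+\alpha(x)/2}}\rho(x,\hat p\cdot\hat k)\mathbf1_{S^\eps_{>}}(\hat p)$, obtained by cancelling the normalizing integral in \eqref{def_pi} against the one in \eqref{eq:Lambda}; since $\rho(x,\hat p\cdot\hat k)$ is symmetric in $(\hat p,\hat k)$ and jumps preserve $x$, the usual $\hat p\leftrightarrow\hat k$ symmetrization shows this jump operator is self-adjoint and equals $\calL^\eps_{>}$. Hence $\calA_\eps^\ast q=-\hat k\cdot\nabla_x q+\sigma_\eps^2(x)\Delta_{\mathbb{S}^2}q+\calL^\eps_{>}q$, matching \eqref{RTE_approx}, and since $q(0,\cdot)=u_0/\bar u_0$, uniqueness yields $q=u_\eps/\bar u_0$.

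The main obstacle I anticipate is the diffusion generator computation with the \emph{space-dependent} coefficient $\sigma_\eps(X^x_n(t))$: the scaling argument from \cite{Berg} is clean for constant coefficients, whereas here $\sigma_\eps$ is evaluated along the random, $K$-driven path $X^x_n$, so one must check that the coupling affects neither the invariance $|K|\equiv1$ nor the local form of the generator. Because $X$ has finite variation ($dX=K\,dt$) and enters \eqref{diff_eq} only through $\sigma_\eps(X)$, the quadratic-variation cancellation for $|K|^2$ is unaffected and the generator localizes correctly, but this must be verified carefully. A secondary technical point is justifying that $p_t$ admits a density $q$ and upgrading the weak forward equation to the strong form \eqref{RTE_approx}; this rests on the smoothing of the spherical diffusion and on the regularity of $u_\eps$ assumed earlier in the paper.
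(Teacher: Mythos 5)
Your first stage is essentially the paper's own argument: the paper also computes the generator by splitting $\E_z[f(D_\eps(h))]$ according to whether the first jump time $T_1$ exceeds $h$, applies the It\^o formula to $\check f(\psi^z_0(t))$ between jumps (using the expression of $\Delta_{\mathbb{S}^2}$ in terms of derivatives in $\R^3$), and uses the exponential law of $T_1$ conditioned on the flow together with the kernel $\Pi_\eps$ for the jump contribution. Your sphere-invariance check ($d|K|^2=0$ by cancellation of the drift against the quadratic variation) and your remark that the space dependence of $\sigma_\eps(X(t))$ is harmless because $X$ has finite variation are both correct and consistent with what the paper does. Your explicit identification of $\calA_\eps^\ast$ with the right-hand side of \eqref{RTE_approx}, via the identity $\Lambda_\eps(z)\pi_\eps(z,\hat p)=\frac{\lambda(x)}{2^{1+\alpha(x)/2}}\rho(x,\hat p\cdot\hat k)\mathbf{1}_{S^\eps_>}(\hat p)$ and the $\hat p\leftrightarrow\hat k$ symmetry, is also needed in the paper, where it is used more tacitly.

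The divergence, and the gap, is in your second stage. Your route is: the law $p_t$ of $D_\eps(t)$ admits a density $q$; $q$ solves \eqref{RTE_approx} (first weakly, then strongly); PDE uniqueness then gives $q=u_\eps/\bar u_0$. The step ``$p_t$ admits a density and the weak forward equation can be upgraded to the strong form'' is not a secondary technical point: it is the crux of this route, and nothing in your sketch delivers it. Absolute continuity of the law of a jump--diffusion whose Brownian forcing acts only in the $\hat k$ variable (the $x$-motion being deterministic given $K$) is a hypoellipticity-type statement, not a soft consequence of ``smoothing of the spherical diffusion''; and the regularity of $u_\eps$ tells you nothing about the regularity of $p_t$ before the two have been identified, which is precisely what is to be proved. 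The paper circumvents this entirely by running the argument in the opposite direction: from the martingale problem (\cite[Proposition 1.7 p. 162]{Ethier}) it gets the measure-valued forward equation $\mu_t(f)=\mu_0(f)+\int_0^t\mu_s(\calA_\eps f)\,ds$; it then defines $\nu_t(f):=\frac{1}{\bar u_0}\int u_\eps(t,x,\hat k)f(x,\hat k)\,dx\,\sigma(d\hat k)$ from the already-known smooth solution $u_\eps$, checks by the adjoint computation that $\nu_t$ satisfies the same measure-valued equation with the same initial datum, and invokes uniqueness for this equation at the level of measures (\cite[Proposition 9.18 p. 251]{Ethier}). No density of $p_t$ and no strong-form upgrade are ever needed. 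Your proof becomes complete if you replace your final step by this reversal (or an equivalent duality argument); as written, it does not close.
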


The terminology \emph{forward} comes from the fact that the particles are emitted at random points at time $t=0$ (through $\mu_0$) and propagate towards the observation position $z=(x,\hat k)$. The proof of Proposition \ref{rep_prob2} is provided in Section \ref{proof_rep_prob2}. Let us illustrate two aspects of the representation \eqref{eq:prob_rep2}. In order to obtain an estimation of $u_\eps(t, x, \hat k)$ at the point $z=(x,\hat k)$, we calculate the probability
\begin{equation}\label{eq:energy_approx}
\frac{\bar u_0}{|\mathbf{B}(z,r)|} \, \mathbb{P}_{\mu_0}\big( D_\eps(t) \in \mathbf{B}(z,r) \big) \simeq u_\eps(t, x, \hat k),
\end{equation}
where $\mathbf{B}(z,r)\subset \mathbb{R}^3\times \mathbb{S}^2$ stands for the open ball centered at $z=(x,\hat k)$ with radius $r \ll 1$. If we are only interested in e.g. the energy density at point $x$, we estimate
\begin{equation*}
\frac{\bar u_0}{|B(x,r)|} \, \mathbb{P}_{\mu_0}\big(D_{\eps}(t) \in B(x,r)\times \mathbb{S}^2\big) = \frac{\bar u_0}{|B(x,r)|} \mathbb{P}_{\mu_0}\big( D_{1, \eps}(t) \in B(x,r)\big)  \simeq \int_{\mathbb{S}^2} u_\eps(t, x, \hat k) \sigma(d\hat k),
\end{equation*}
where $B(x,r) \subset \mathbb{R}^3$ stands for the open ball centered at $x$ with radius $r \ll 1$, and $D_{1, \eps}$ is the $x$ component of $D_\eps$.

\section{Monte Carlo Method}\label{sec:MC_meth}

Based on the previous probabilistic representation of \fref{RTE_approx}, solving \eqref{RTE_approx} requires the generation of random paths of the stochastic process $D_\eps$. For any measurable bounded functions $f$, the convergence of the estimator
\[
\mu_N(t,f) := \frac{1}{N} \sum_{j=1}^N f(D^j_\eps(t)) \underset{N\to\infty}{\longrightarrow}  \int f(x,\hat k) u_\eps(t,x,\hat k) \, dx \, \sigma(d \hat k)\qquad \mathbb{P}_{\mu_0}-\text{almost surely},
\]
is guaranteed by the strong law of large numbers.  Above $(D^{j}_\eps)_j$ is a sample of $D_\eps$. We detail next how to treat efficiently the diffusion and jump components of the process $D_\eps$.

\subsection{The jump part}

Since the process $D_\eps$ is inhomogeneous, i.e. $\Lambda_\eps$ and $\Pi_\eps$ both depend on $z=(x, \hat k)$, we use the so-called \emph{thinning} method, also referred to as the \emph{fictitious shocks} method \cite{Lapeyre}. It is based on a acceptation/rejection step and consists in simulating at first more jumps (or shocks) than necessary. In a second step, some of the jumps are rejected according to an appropriate probability distribution in order to recover the original dynamics. Assume $0<\alpha_m\leq \alpha(x) \leq \alpha_M<2$. A direct calculation shows that 
\begin{equation}\label{eq:def_bar_Lambda}
\Lambda_\eps(z) \leq \frac{2\pi \sup \lambda \sup a}{2^{1+\alpha(x)/2}}\int_{-1}^{1-\eps}\frac{dt}{(1-t)^{1+\alpha(x)/2}}=\frac{4\pi \sup \lambda \sup a}{\alpha(x) 2^{1+\alpha(x)/2} \eps^{\alpha(x)/2}} \leq \frac{2\pi \sup \lambda \sup a}{\alpha_m \eps^{\alpha_M/2}} =: \bar \Lambda_\eps.
\end{equation} 
The fictitious jump times are then drawn as 
\[\bar T_n :=\sum_{j=1}^n \xi_j,\qquad\text{and}\qquad \bar T_0 = 0,\]
where the $(\xi_j)_j$ are i.i.d. exponentially distributed random variables with parameter $\bar \Lambda_\eps$.

The thinning method consists in the following acceptation/rejection step. At a jump time $\bar T_n$ and current position $z_n$, we draw a jump $z$ according to the probability distribution $\Pi_\eps$ given by \eqref{def_jump}. This jump is accepted with probability $p(z_n) = \Lambda_\eps(z_n) /\bar \Lambda_\eps$. Otherwise, the process $D_\eps$ continues to diffuse starting from $z_n$, and $\bar T_n$ is not considered as a \emph{true} jump time for $D_\eps$. Practically, we can define the state as 
\[
\bar Z_n := z \, \mathbf{1}_{(U_n \leq p(z_n))} +  z_n \, \mathbf{1}_{(U_n > p(z_n))}  
\] 
at each fictitious jump times $\bar T_n$. Above, $U_n$ is a random variable uniformly distributed over $[0,1]$ and all the $U_n$'s are independent.

\subsection{The diffusion part} \label{secdifpart}

The diffusion part between two jumps satisfies the linear SDE \fref{diff_eq}, and is simulated using the following Euler-Maruyama type scheme 
\begin{equation}\label{def:scheme}
(S_{n,m}):\left\{ \begin{array}{rcl}
X_{n, m+1} & = & X_{n, m} + h_{n,m} \, \hat K_{n, m}  \\
K_{n, m+1} & = & \hat K_{n,m} - 2h_{n,m} \, \sigma^2_{\eps}(X_{n, m} )\, \hat K_{n,m} +  \sqrt{2 h_{n,m}} \, \sigma_{\eps}(X_{n, m}) \, \hat K_{n, m} \times W_{n,m}\\
\hat K_{n,m+1} & =& \frac{K_{n,m+1}}{|K_{n,m+1}|},
\end{array} 
\right.
\end{equation}
where the $(W_{n,m})_{m,n}$ are i.i.d. mean-zero Gaussian random vectors with identity covariance matrix. Note that the above scheme does not conserve the Euclidean norm with respect to the angular variable, and as a consequence the evolution of $(K_{n,m})_{n,m}$ does not remain on the unit sphere over the iterations. This motivates the definition of $\hat K_{n,m}$. We have nevertheless $
\mathbb{E}[\,|K^{k}_{n, m}|^2\,]=1$, for all $n$ and $m$, 
and Theorem \ref{mainth} below guarantees that the distribution of $\hat K_{n,m}$ provides a converging approximation of the true statistics.  The stepsizes $h_{n,m}$ are determined from a fixed stepsize $h$ as follows. Since our convergence theorem further is stated at a fixed time $T$, we include $t=T$ in the discretization grid for simplicity. Let then $N_T$ such that $\bar T_{N_T} \leq T <\bar T_{N_T+1}$. When $n \neq N_T$, let $dt_n=\bar T_{n+1}-\bar T_n$. With $m_n = [dt_n/h]$ ($[\cdot]$ the integer part), we set, for $n \neq N_T$,
\[
h_{n,m} = \left\{ \begin{array}{ccc} 
h & for & m=0,\dots,m_n-1 \\
dt_n - m_n h & for & m=m_n,
\end{array}\right.
\]
and if $m_n=0$, we set $h_{n,0}=dt_n$. In the rest of the paper, the grid is denoted by $(T_{n,m})$, where $T_{n,m+1} =T_{n,m} + h_{n,m}$ for $T_{n,0}=\bar T_n$, $n \geq 0$ and $m=0,\dots,m_{n}-1$. When $n=N_T$, we divide the interval $[\bar T_{N_T},\bar T_{N_T+1}]$ similarly into subintervals of length $h_{N_T,m}$ at most $h$ (we suppose there are $m_{N_T}$ of those) and such that $T_{N_T,m}=T$ for one $m$ in $0,\dots,m_{N_T}$. 

\subsection{The overall discretized process and convergence}

For any $t \geq 0$, the approximate version of the  process $D_\eps$, denoted  $D_{h,\eps}$,  is defined by:
\[ D_{h,\eps} (t) = (X_{h,\eps}(t), K_{h,\eps}(t)) := \sum_{n = 0}^{\infty} \sum_{m=0}^{m_n} \mathbf{1}_{[T_{n,m},T_{n,m}+h_{n,m})}(t)Z_{n,m},
\]
where
\begin{enumerate}
\item For any $m\in \{0,\dots,m_n\}$,
\[
Z_{n,m+1} = S_{n,m}(Z_{n,m}),
\]
where $Z_{n,0}=\bar Z_n$ for the $(\bar Z_n)_n$ defined below, and where $S_{n,m}(Z_{n,m})=(X_{n,m+1},\hat K_{n,m+1})$ is given by the scheme \eqref{def:scheme} with initial condition $Z_{n,m}=(X_{n,m},\hat K_{n,m})$. 
\item The sequence $(\bar Z_n)_n$, is defined by
    \[
      \bar Z_{n+1} = z \, \mathbf{1}_{(U_{n} \leq p(Z_{n,m_n+1}))} +  Z_{n,m_n+1}  \, \mathbf{1}_{(U_{n} > p(Z_{n,m_n+1}))}\qquad n\geq 0,
      \]
where $z$ is drawn according to the probability measure $\Pi_\eps( Z_{n,m_n+1} , dz)$ defined by \eqref{def_Pi}.
\end{enumerate}

Below, $b$ is the backward solution to \fref{RTE} with terminal condition $b(T,x,\hat k) =  f(x,\hat k)$, see \fref{eq:back_RTE}. Our convergence result is then the following (we set $h$ such that $4 h \sup_x \sigma_\eps^2(x) \leq 1$ to simplify some expressions):

\begin{thm} \label{mainth}
Consider
\[
\mu_{N, h, \eps}(t,f) = \frac{1}{N} \sum_{j=1}^N f( D^j_{h,\eps}(t)),\qquad \mu(t,f) =\int f(x,\hat k) u(t,x,\hat k) \, dx \, \sigma(d \hat k),
\] 
where $(D^j_{h,\eps})_j$ is a sample of $D_{h,\eps}$. For any $T>0$,  $\eta >0$ and any smooth bounded function $f$ on $\mathbb{R}^3\times \mathbb{S}^2$, we have
\begin{equation}\label{eq:main_ineq} 
\limsup_{N\to \infty}\mathbb{P}\Big(|\mu_{N, h, \eps}(T,f)-\mu(T,f)|> \frac{\eta \Sigma_{h,\eps}}{ \sqrt{N}} + \eps^{2-(\alpha_M/2)}F_0(u,b,f) + h F_1(b)\Big) \leq \emph{erfc}(\eta/\sqrt{2}),
\end{equation}
where
\[\Sigma_{h,\eps} = \sqrt{Var\big(f(D_{h,\eps}(T))\big)} \leq \sup |f|.\]
The functions $F_0$ and $F_1$ are explicit and independent of $\eps$ and $h$, and are defined in the proof of the theorem in Section \ref{proof_cv}.
\end{thm}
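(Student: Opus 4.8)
The plan is to control $|\mu_{N,h,\eps}(T,f)-\mu(T,f)|$ by splitting it, via the triangle inequality, into the three error sources anticipated in the introduction. Writing $\bar\mu_{h,\eps}(T,f):=\mathbb{E}_{\mu_0}[f(D_{h,\eps}(T))]$ and $\bar\mu_{\eps}(T,f):=\mathbb{E}_{\mu_0}[f(D_{\eps}(T))]$ for the means of the discretized and exact processes, I would decompose
\[
|\mu_{N,h,\eps}(T,f)-\mu(T,f)| \leq \underbrace{|\mu_{N,h,\eps}(T,f)-\bar\mu_{h,\eps}(T,f)|}_{\mathrm{(MC)}} + \underbrace{|\bar\mu_{h,\eps}(T,f)-\bar\mu_{\eps}(T,f)|}_{\mathrm{(disc)}} + \underbrace{|\bar\mu_{\eps}(T,f)-\mu(T,f)|}_{\mathrm{(approx)}}.
\]
The first term is purely statistical and will produce the $\mathrm{erfc}$ bound, while the last two are deterministic biases matched respectively to $hF_1(b)$ and $\eps^{2-\alpha_M/2}F_0(u,b,f)$. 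By Proposition \ref{rep_prob2} the law of $D_\eps(T)$ under $\mathbb{P}_{\mu_0}$ has density $u_\eps(T,\cdot)/\bar u_0$, so (after normalizing $\bar u_0=1$, the general case following by scaling) the $\mathrm{(approx)}$ term equals $\big|\int f\,(u_\eps-u)(T)\,dx\,\sigma(d\hat k)\big|$.

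For the $\mathrm{(approx)}$ term I would avoid the crude $L^2$ estimate of Proposition \ref{prop:approx_RTE}, since $f$ is only bounded and need not lie in $L^2(\mathbb{R}^3\times\mathbb{S}^2)$; instead I would use the backward solution $b$ of \eqref{RTE} with terminal datum $f$, see \eqref{eq:back_RTE}. Since the transport and scattering parts of $\calL$ are respectively anti-self-adjoint and self-adjoint on $L^2(\mathbb{R}^3\times\mathbb{S}^2)$, the pairing $t\mapsto\langle b(t),u(t)\rangle$ is conserved, giving $\mu(T,f)=\langle b(0),u_0\rangle$, and because $u_\eps(0)=u_0$ one obtains the Duhamel identity $\mathrm{(approx)}=\int_0^T\langle b(t),(\sigma^2_\eps\Delta_{\mathbb{S}^2}-\calL^\eps_{<})u_\eps(t)\rangle\,dt$, the common transport and $S^\eps_{>}$ contributions cancelling. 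Bounding $(\sigma^2_\eps\Delta_{\mathbb{S}^2}-\calL^\eps_{<})u_\eps$ by the same local Taylor expansion of $\rho$ over $S^\eps_{<}$ used to prove Proposition \ref{prop:approx_RTE} yields the factor $\eps^{2-\alpha_M/2}$ and a constant $F_0(u,b,f)$ depending on $b$ and on $\hat k$-derivatives of $u_\eps$ up to order $4$. For the $\mathrm{(disc)}$ term I would run a Talay–Tubaro weak-error analysis: introduce the backward solution $v(t,z)=\mathbb{E}_z[f(D_\eps(T-t))]$ of $\partial_t v+\calA_\eps v=0$, telescope $\bar\mu_{h,\eps}-\bar\mu_{\eps}=\sum_{n,m}\mathbb{E}[v(T_{n,m+1},Z_{n,m+1})-v(T_{n,m},Z_{n,m})]$ along the random grid $(T_{n,m})$, and bound each one-step local error of the scheme \eqref{def:scheme}, including the renormalization $\hat K=K/|K|$ (harmless since $\mathbb{E}[|K_{n,m}|^2]=1$ and a second-order expansion of $x\mapsto x/|x|$ applies), by $O(h^2)$. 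Summing over the $O(T/h)$ substeps and averaging over the jump times, whose number up to $T$ is integrable because $\bar\Lambda_\eps<\infty$ by \eqref{eq:def_bar_Lambda}, produces the bound $hF_1(b)$ with $v$ uniformly close to $b$.

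Finally, for the $\mathrm{(MC)}$ term I would invoke the central limit theorem for the i.i.d. sample $(f(D^j_{h,\eps}(T)))_j$: since $f$ is bounded, $\Sigma_{h,\eps}^2=\mathrm{Var}(f(D_{h,\eps}(T)))\leq\sup|f|^2$ is finite, and $\sqrt{N}\,(\mu_{N,h,\eps}(T,f)-\bar\mu_{h,\eps}(T,f))/\Sigma_{h,\eps}$ converges in distribution to a standard Gaussian $\mathcal{Z}$. Hence $\limsup_{N}\mathbb{P}(|\mu_{N,h,\eps}(T,f)-\bar\mu_{h,\eps}(T,f)|>\eta\Sigma_{h,\eps}/\sqrt N)=\mathbb{P}(|\mathcal{Z}|>\eta)=\mathrm{erfc}(\eta/\sqrt 2)$, using continuity of the Gaussian law at $\eta$. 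Combining, on the complement of $\{\,|\mu_{N,h,\eps}(T,f)-\bar\mu_{h,\eps}(T,f)|>\eta\Sigma_{h,\eps}/\sqrt N\,\}$ the two deterministic estimates bound the total error by $\eta\Sigma_{h,\eps}/\sqrt N+hF_1(b)+\eps^{2-\alpha_M/2}F_0(u,b,f)$, so the event in \eqref{eq:main_ineq} is contained in that $\mathrm{(MC)}$ event and the claimed $\mathrm{erfc}$ bound follows.

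I expect the main obstacle to be the $\mathrm{(disc)}$ weak-error step: the Euler–Maruyama scheme lives on $\mathbb{S}^2$ with the nonstandard drift–cross-product structure of \eqref{diff_eq} together with the renormalization map, the integration grid $(T_{n,m})$ is random and refined by the thinning times, and the terminal substep $h_{n,m_n}\leq h$ of each interval must be handled separately. Making the telescoping rigorous requires enough spatial regularity of $v$ (equivalently of the smooth solution assumed after Proposition \ref{prop:approx_RTE}) to justify the one-step $O(h^2)$ estimates uniformly in the jump configuration, and care that the coupling through the shared jump times $\bar T_n$ does not degrade the order. The duality argument for $\mathrm{(approx)}$ is the second delicate point, as it hinges on transferring the $\eps^{2-\alpha_M/2}$ Taylor estimate of Proposition \ref{prop:approx_RTE} from an $L^2$ energy bound to a weighted pairing against $b$.
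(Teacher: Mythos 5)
Your overall architecture mirrors the paper's: the same splitting into a statistical term plus two deterministic biases, the same CLT-plus-Portmanteau treatment of the Monte Carlo term (including the inclusion-of-events step at the end), and, for the $\eps$-bias, your duality identity $\mu(T,f)=\langle b(0),u_0\rangle$ with the Duhamel formula is exactly the PDE counterpart of what the paper does probabilistically inside Proposition \ref{convE}, where the It\^o formula is applied to $b(t,\tilde D_\eps(t))$ and the backward equation \eqref{eq:back_RTE} is used to reduce the drift to $(\sigma^2_\eps\Delta_{\bbS^2}-\calL^\eps_<)b$. Your instinct to avoid Cauchy--Schwarz with $\|f\|_{L^2}$ is even a genuine improvement in generality over the paper's Step 3, which does invoke $\|f\|_{L^2(\R^3\times\bbS^2)}$ for a merely bounded $f$. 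One caveat: you should place the singular operator on $b$ and pair against $u_\eps$, bounding via $\|(\sigma^2_\eps\Delta_{\bbS^2}-\calL^\eps_<)b\|_\infty\,\|u_\eps(t)\|_{L^1}$ with $\|u_\eps(t)\|_{L^1}=\bar u_0$; as written, your $F_0$ depends on fourth-order $\hat k$-derivatives of $u_\eps$, which are not known to be bounded uniformly in $\eps$.

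The genuine gap is in your treatment of the discretization term. You propose to telescope against $v(t,z)=\E_z[f(D_\eps(T-t))]$, the backward Kolmogorov solution for $\calA_\eps$. The one-step local errors in a Talay--Tubaro analysis are controlled by derivatives of $v$ up to order four, so the resulting constant is a supremum of such derivatives; for the theorem's claim that $F_1$ is \emph{independent of} $\eps$, you would need these derivative bounds uniform in $\eps$. But $v=v_\eps$ depends on $\eps$ through $\calA_\eps$, whose jump intensity $\bar\Lambda_\eps\sim\eps^{-\alpha_M/2}$ blows up and whose diffusion coefficient $\sigma_\eps$ degenerates as $\eps\to 0$; neither your sketch nor the paper provides uniform-in-$\eps$ $C^4$ bounds for $v_\eps$, and your proposed remedy (``$v$ uniformly close to $b$'') controls values, not the fourth derivatives entering the local errors. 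This is precisely why the paper runs the \emph{entire} weak-error analysis of Proposition \ref{convE} against the $\eps$-independent backward solution $b$ of the original singular RTE: since $b$ does not solve the backward equation for $\calA_\eps$, this produces an extra mismatch term (the $E^{(6)}_{n,m}$ term, i.e. $(\sigma^2_\eps\Delta_{\bbS^2}-\calL^\eps_<)b$), which Lemma \ref{lemma:approx_sig} bounds by $\eps^{2-\alpha_M/2}E_\infty(b)$ --- this is where the second $\eps^{2-\alpha_M/2}$ contribution in $F_0$ comes from --- but in exchange every constant in $F_1(b)$ is manifestly $\eps$-independent. To close your gap you would either have to prove uniform-in-$\eps$ regularity of $v_\eps$ (a nontrivial hypoelliptic-type estimate) or switch, as the paper does, to telescoping $b$ along the grid and absorbing the mismatch into $F_0$; you would also need the jump-SDE representation of Lemma \ref{lemdis} (or an equivalent device) to make the It\^o computations across the random thinning grid rigorous.
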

Theorem \ref{mainth} is proved in Section \ref{proof_cv}. In \eqref{eq:main_ineq}, there are three terms that quantify the approximation error of our estimator $\mu_{N, h, \eps}(t,f)$: one of order $\eps^{2-(\alpha_M/2)}$ due to the approximation of $u$ by $u_\eps$ (the smaller the $\alpha_M$, i.e. the less singular the kernel is, the smaller the error), one of order $h$ due to the numerical approximation of the diffusion over the unit sphere, and one due to the MC approximation with the standard $1/\sqrt{N}$ convergence rate. Note that the discretization error of the diffusion process is only of order $h$ and not of order the standard $\sqrt{h}$. The reason is that we are only interested in the convergence of Monte Carlo estimators, allowing us to consider this discretization error in the weak sense \cite{talay}. However, a weak second-order Runge-Kutta method can be considered to provide an error in $h^2$ instead of $h$ for the Euler scheme \cite{debrabant}. Modifications of the SDE \eqref{diff_eq} can also be considered to provide weak higher-order scheme \cite{abdulle}. The main goal of this paper being to present a methodology to capture efficiently the behavior induced by the singularity, we focus our attention on the error in $\eps$, and do not present weak-higher order discretization schemes for the SDE. In this way, the Euler scheme is considered for simplicity in the proof of Theorem \ref{mainth}. For the numerical simulations of Sections \ref{sec:test_case} and \ref{sec:numilla}, that illustrate the roles of $\varepsilon$ and $\alpha$ in the approximation, the parameter $h$ will be chosen proportionally to the shortest mean free time $\bar \Lambda_\eps^{-1}$, and small enough so that the approximation error w.r.t. $\eps$ in dominant. $N$ will also be chosen large enough so that the error of approximation in $\eps$ is dominant. The MC error is controlled by the standard deviation $\Sigma_{h,\eps}$, and variance reduction techniques can be designed to reduce this term. When estimating the energy density over a given region $B$, as in \eqref{eq:energy_approx}, the number of particles $N$ needed to reach a given error threshold can be estimated as follows: the root mean square error of the MC estimator for $f=\mathbf{1}_B/|B|$ reads
\begin{equation}\label{def:RMSE}
RMSE_{h,\eps} := \frac{\E\big[\big(\mu_{N, h, \eps}(T,f) - \E\big[f(D_{h,\eps}(T))]\big)^2\big]^{1/2}}{\E\big[f(D_{h,\eps}(T))\big]} = \sqrt{\frac{P_{h,\eps}(1-P_{h,\eps})}{N |B|^2}}\leq \frac{1}{2 \sqrt{N} |B|},
\end{equation}
and the relative MC error is
\begin{equation}\label{def:rel_err}
\frac{RMSE_{h,\eps}}{\E\big[f(D_{h,\eps}(T))\big]} = \frac{1}{\sqrt{N}} \sqrt{\frac{1-P_{h,\eps}}{P_{h,\eps}}}\leq \frac{1}{\sqrt{N P_{h,\eps}}},
\end{equation}
with
\[
P_{h,\eps} := \Pro_{\mu_0}(D_{h,\eps}(T)\in B) \simeq \frac{1}{\bar u_0 }\int_B u_{\eps}(t,x,k) dx \sigma(d\hat k).
\]
Above, $\mu_0$ and $\bar u_0$ are given by \eqref{eq:def_mu0}. A RMSE lower than a threshold $c$ would then require
\begin{equation}\label{eq:RMSE}
N\geq \frac{1}{4c^2|B|^2},
\end{equation}
while a relative error would require
\begin{equation}\label{eq:rel_err}
N \geq \frac{1}{c^2 P_{h,\eps}}.
\end{equation}
If $B$ is a region centered around a point $(x_0, k_0)$, with a small volume (that is $ P_{h,\eps}\ll 1$ as for \eqref{eq:energy_approx}), we would have
\[
N \geq \frac{\bar u_0}{c^2 |B| u_\eps(t,x_0, k_0)}.
\]

\subsection{Algorithms}\label{sec:MC_algo}

We discuss in this section  practical aspects of the method. Before stating the algorithm itself, let us emphasize that a key point is to sample efficiently the jumps from $\Pi_\eps$ given by \eqref{def_Pi}. 

Let us fix the current state of the process $D_{h,\eps}$ at a point $z = (x,\hat k)$. In spherical coordinates, $\pi_\eps$ defined in \eqref{def_pi} is equivalent to a probability density function drawing a polar angle $\theta$ and an azimuthal angle $\varphi$. Here, the north pole of the spherical system is the current direction $\hat k$, and it is direct to see that the azimuthal angle $\varphi$ is uniformly distributed over $(0,2\pi)$. We denote this by $\varphi \sim \calU(0,2\pi)$. For the polar angle, a change of variables leads  to $\theta =\arccos(1-\chi)$, where $\chi$ has probability density function 
\[
f_\chi(\chi|x) := \frac{a(\sqrt{2\chi})}{C_\chi \chi^{1+\alpha(x)/2}} \mathbf{1}_{(\eps,2)}(\chi),
\]
and $C_\chi$ is a normalizing constant. Therefore, to draw a jump according to \eqref{def_pi} starting from $\hat k$, we compute
\begin{equation}\label{eq:def_R}
\hat p = R(\theta,\varphi,\hat k):=\cos(\theta)\hat k + \sin(\theta)\Big(I_3 + \sin(\varphi)Q(\hat k) + (1-\cos^2(\varphi))Q^2(\hat k)\Big)\hat k^\perp,
\end{equation}
where $\hat k^\perp$ is an orthonormal vector to $\hat k$, $I_3$ is the $3\times 3$ identity matrix, and 
\be \label{def:Q_mat}
Q(k) = \begin{pmatrix} 0 & -  k_{3} &  k_{2} \\
k_{3} & 0 & - k_{1} \\
- k_{2} &  k_{1} & 0 \end{pmatrix} ,\qquad \text{where}\qquad k=(k_{1}, k_{2}, k_{3}).
 \ee
The transformation $R$ corresponds to a rotation from $\hat k$ to $\hat p$ with polar angle $\theta$ with respect to $\hat k$ and azimuthal angle $\varphi$ with respect to $\hat k^\perp$. Note that the choice of $\hat k^\perp$ is not important since $\varphi$ is uniformly distributed over $(0,2\pi)$. 
 
We notice that in the case of a constant function $a\equiv a_0$, one obtains a truncated Pareto distribution for $\chi$. The corresponding cumulative distribution function can be exactly inverted giving then a direct simulation method. In this case, the cumulative distribution function is given by, for $\chi\in(\eps,2)$,
\[F_\chi(\chi|x) = \frac{a_0}{C_\chi} \int_{\eps}^{\chi} \frac{dv}{v^{1+\alpha(x)/2}} = \frac{1 - (\eps /\chi )^{\alpha(x)/2}}{1 - (\eps /2 )^{\alpha(x)/2}}.\]
The random variable $\chi$ can then be generated by 
\[\chi = F^{-1}_\chi(U|x) = \eps(1- (1 - (\eps /2 )^{\alpha(x)/2}) U )^{-2/\alpha(x)},\]
where $U$ is a random variable uniformly distributed over $(0,1)$ ($U\sim \calU(0,1)$). In the case of a non constant function $a$, the main features of the density $f_\chi(\cdot|x)$ are similar to those of the truncated Pareto distribution, and a stochastic collocation method can be considered to simulate $f_\chi(\cdot|x)$. This method is described in Appendix \ref{sec_sto_col} in our context. It is based on the simulation of the above truncated Pareto distribution and proves to be very effective.   

The algorithm used to simulate a trajectory of $D_{h,\eps}$ can be summarized in the following two procedures. The first one corresponds to the simulation of the diffusion process between two (fictitious) jumps, and we use the notation
\[
S(Z, W, h) = \left\{ \begin{array}{l}
 X + h \, K  \\
 K - 2h \, \sigma^2_{\eps}(X) \, K +  \sqrt{2 h} \, \sigma_{\eps}(X) \,   W \times K,
\end{array} 
\right.\]
with $Z=(X,K)$. Below, $N(0,I_3)$ stands for the three dimensional multivariate Normal distribution with identity covariance matrix.  

\begin{algorithm}[H]
\DontPrintSemicolon
\SetKwInOut{Input}{input}
\SetKwInOut{Output}{output}
\SetKwInOut{Init}{initialization}

\Input{current state of the particle $z=(x,\hat k)$, duration of the diffusion $\delta t$}
\Output{state of the particle after the diffusion process}
\Init{$n\leftarrow [\delta t/h]$ \tcp{number of iterations}
$Z\leftarrow z$  \tcp{initialization of the diffusion state}}
\tcp{Main loop of the diffusion}
\For{$j\leftarrow 1 $ \KwTo $n$}{
$W\sim N(0,I_3)$\;
$Z\leftarrow S(Z, W, h)$\;
$K\leftarrow K/|K|$
 }
 \tcp{Add a diffusion step with stepsize $h' \leq h$ to match the duration $\delta t$}
$h'\leftarrow \delta t - n h$\;
$W\sim N(0,I_3)$\;
$Z\leftarrow S(Z, W, h')$\;
$K\leftarrow K/|K|$\;
 \KwRet{$Z$}
\caption{\texttt{Diffusion}}
\end{algorithm}
\vspace{0.5cm}

The second procedure combines the diffusion step with the jump process. Below, we denote by $\calE(\bar \Lambda_\eps)$ the exponential distribution with parameter $\bar \Lambda_\eps$ defined by \eqref{eq:def_bar_Lambda}.  

\begin{algorithm}[H]
\DontPrintSemicolon
\SetKwInOut{Input}{input}
\SetKwInOut{Output}{output}
\SetKwInOut{Init}{initialization}

\Input{Duration $T$ of the particle evolution}
\Output{state of the particle at time $T$}
\Init{$Z \leftarrow (x,\hat k)\sim \mu_0$ \tcp{initialization of the particle state at random}
$t \leftarrow 0$ \tcp{temporary time variable}
$\delta t  \sim \calE(\bar \Lambda_\eps)$ \tcp{first jump time}}
\tcp{main loop for the path evolution}
\While{$t + \delta t < T$}{
$Z\leftarrow \texttt{Diffusion}(Z, \delta t)$ \;
$U\sim \calU(0,1)$ \;
  \If{$U \leq p(Z)$}{
  \tcp{the jump is accepted, $Z$ is transformed}
   $\chi \sim f_\chi(\cdot \, | \, x)$\;
$\theta \leftarrow \arccos(1-\chi)$ \;
$\varphi \sim \calU(0,2\pi)$\;
$\hat p \leftarrow R(\theta,\varphi,\hat k)$\;
$Z \leftarrow (x, \hat p)$ \;
 }
 $t \leftarrow t + \delta t$\;
 $\delta t \sim \calE(\bar \Lambda_\eps)$\;
}
\tcp{remaining diffusion step of duration $T-t$}
$Z\leftarrow \texttt{Diffusion}(Z, T-t)$\;
\KwRet{Z}
\caption{\texttt{TrajectorySimulation}}
\end{algorithm}
\vspace{0.5cm}


\noindent The rest of the paper is dedicated to numerical simulations and the proofs of our main results.
\section{Validation}\label{sec:test_case}

In this section, we first derive a semi-analytical solution to validate our method in the simplest situation where $\alpha$, $a$, $\lambda$ are  constant functions. We then highlight the crucial role of the small jumps correction for computational efficiency.

\subsection{Semi-analytical solution}

We set $\lambda \equiv 1$ and the RTE \eqref{RTE} reads
\begin{equation}\label{RTE_test}
\partial_t u +  \hat k \cdot \nabla_x u  = \calQ u
\end{equation}
with scattering kernel
\[
\calQ f(\hat k) = a\int_{\mathbb{S}^2} \frac{\sigma(d\hat p)}{|\hat p-\hat k|^{2+\alpha}} (f(\hat p)-f(\hat k)),\qquad \hat k \in \mathbb{S}^2.
\]
Using the the Funk-Hekke formula \cite{Samko}, this operator can be diagonalized in $L^2(\mathbb{S}^2)$ equipped with the inner product
\[
\big<f, g\big>_{L^2(\mathbb{S}^2)} = \int_{\mathbb{S}^2} f(\hat p)\overline{g(\hat p)} \sigma(d\hat p) = \int_0^\pi \int_0^{2\pi} f(\theta,\varphi)\overline{g(\theta,\varphi)}\sin(\theta)d\theta d\varphi.
\]
The eigenvalues are given by
\[
\lambda_l = \frac{a \pi \Gamma(-\alpha/2)}{2^\alpha \Gamma(1+\alpha/2)}\left( \frac{\Gamma(l + 1+ \alpha/2 )}{\Gamma(l + 1 - \alpha/2) } -\frac{\Gamma(1+ \alpha/2 )}{\Gamma(1 - \alpha/2) }   \right)\qquad l\in\mathbb{N},
\]
and the eigenvectors are the spherical harmonics
\[Y_{l,m}(\hat k) = Y_{l,m}(\theta,\varphi) := \sqrt{\frac{(2l+1)(l-m)!}{4\pi(l+m)!}} P^m_l(\cos(\theta))e^{im\varphi},\qquad (l,m)\in \mathbb{N}\times\{-l,\dots,l\},\]
where the $P^m_l$ are the associated Legendre polynomials. In order to derive a semi-analytical solution, we Fourier transform \eqref{RTE_test} w.r.t. $x$, and introduce
\[\hat u (t,q,k) = \int_{\mathbb{R}^3} u(t,x,k)e^{-iq\cdot x}dx.\]
Above, $q=\tilde q := (0,0,\xi)$ so that $\tilde u (t,\xi, \hat k) = \hat u (t,\tilde q, \hat k)$
solves 
\begin{equation}\label{RTE_test_fourier}
\partial_t \tilde u + i \, \hat k\cdot \tilde q \, \tilde u  = \calQ \tilde u.
\end{equation}
Writing $\hat k$ in spherical coordinates with $(0,0,1)$ as north-pole, this latter equation reads,
\[
\partial_t \tilde u(t,\xi,\theta,\varphi) = (  \calQ -i \xi \cos(\theta))\tilde u (t,\xi,\theta,\varphi), \qquad (t,\xi,\theta,\varphi)\in (0,\infty)\times \mathbb{R} \times (0, \pi) \times (0,2\pi).
\]
We now decompose $\tilde u$ on the basis of spherical harmonics
\[
\tilde u(t,\xi,\theta,\varphi) = \sum_{l = 0}^{\infty}\sum_{m=-l}^l \hat u_{l,m}(t,\xi)Y_{l,m}(\theta,\varphi),
\]
resulting in
\begin{align}\label{eq_diff_test}
\frac{d}{dt} \hat u_{l,m} = \lambda_l  \hat u_{l,m} -  i \xi (d^+_{l,m}\hat u_{l+1,m} + d^-_{l,m}\hat u_{l-1,m}) \quad \textrm{for} \quad l \geq 1, \qquad\frac{d}{dt}\hat u_{0,0} =   -i \xi d^+_{0,0}\,\hat u_{1,0}\quad \textrm{for}\quad l=0.
\end{align}
Above, we have used the fact that
\[
\big<Y_{m',l'}, \cos(\theta)Y_{m,l}\big>_{L^2(\mathbb{S}^2)} = \left\{
\begin{array}{lcl}
\displaystyle d^+_{l,m} := \sqrt{\frac{(l+m+1)(l-m+1)}{(2l+1)(2l+3)}} & \text{if} & m=m'\text{ and }l'-l=1, \\
\displaystyle d^-_{l,m} := \sqrt{\frac{(l+m)(l-m)}{(2l-1)(2l+1)}} & \text{if} & m=m'\text{ and }l'-l=-1, \\
0 & & \text{otherwise}.
\end{array}
\right.
\]
For computational purposes, we introduce a cutoff in the variable $l$ ($l\in\{0,\dots,L\}$), and consider a truncated version of \eqref{eq_diff_test} as the vector differential equation
\begin{equation}\label{eq_diff_test2}
\frac{d}{dt}  \hat u^L(t,\xi) = (D^L - i \xi \, A^L) \, \hat u^L(t,\xi), \qquad \hat u^L (t,\xi)= \big(\hat u^L_{l^2 + j}(t,\xi)\big)_{l\in\{0,\dots,L\}\, j\in\{0,\dots,2l\}} \in \mathbb{C}^{(L+1)^2},
\end{equation}
where $D^L$ and $A^L$ are two $(L+1)^2\times (L+1)^2$ matrices defined by
\[
\left\{
\begin{array}{l}
D^L_{l^2 + j+1,l^2 + j+1}:= \lambda_l \qquad\text{for}\qquad l\in \{0,\dots,L\},\quad j\in \{0,\dots,2l\},\\
A^L_{l^2 + j+1,(l + 1)^2 + j+2} :=  d^+_{l,j-l} \qquad\text{for}\qquad l\in \{0,\dots,L-1\},\quad j\in \{0,\dots,2l\}\\
A^L_{l^2 + j+2,(l - 1)^2 + j+1} :=  d^-_{l,j-l+1} \qquad\text{for}\qquad l\in \{1,\dots,L\},\quad j\in \{0,\dots,2(l-1)\}.
\end{array}\right.
\]
All other coefficients in both $D^L$ and $A^L$ are set to $0$. Note that the indexing of the matrices starts at $0$ for simplicity. The solution to \eqref{eq_diff_test2} reads
$
\hat u^L (t,\xi) = e^{(D^L - i \xi \, A^L)t} \hat u^L(0,\xi),
$
where the matrix exponential is computed numerically. For our test case, we consider the following initial condition 
\[
u(t=0,x,\hat k)= \frac{1}{\sqrt{2\pi}} e^{-|x|^2/2} \cdot 2\cos^2(\theta/2) = \frac{1}{\sqrt{2\pi}} e^{-|x|^2/2} ( 2\sqrt{\pi} Y_{0,0}(\theta,\varphi) + 2\sqrt{\pi/3}Y_{0,1} (\theta,\varphi)),
\]
so that 
\[
\hat u^L_{l^2 + j}(t=0,\xi) = \left\{
\begin{array}{ccc}
2\sqrt{\pi} e^{-\xi^2/2} & \text{for} & l=j=0,\\
2\sqrt{\pi/3} e^{-\xi^2/2} & \text{for} & l=j=1,\\
0 & &\text{otherwise}.
\end{array}\right.
\]
Finally, an approximation of $\tilde u$, solution to \eqref{RTE_test_fourier}, is given by
\[
\tilde u^L(t=0,\xi,\theta,\varphi) = \sum_{l=0}^L \sum_{j=0}^{2l}  \Big[e^{(D^L - i \xi \, A^L)t} \hat u^L(0,\xi)\Big]_{l^2 + j}Y_{l,j-l}(\theta,\varphi).
\]
For numerical comparisons with our MC method, we introduce a discretization of the unit sphere $\mathbb{S}^2$ via the polar and azimuthal angles $(\theta_m)_{m}$ and $(\varphi_m)_{m}$, with respective stepsize $\Delta\theta$ and $\Delta \varphi$. We then compare
\[
\tilde u^L (t, \xi, \theta_m, \varphi_m')  \simeq \frac{1}{\Delta \theta \Delta\varphi}\int_{\theta_m}^{\theta_{m+1}}\int_{\varphi_{m'}}^{\varphi_{m'+1}}\tilde u^L (t, \xi, \theta, \varphi)\sin(\theta)  d\theta d\varphi
\]
with its MC approximation
\[
\tilde u^L_N (t, \xi, m, m') = \frac{1}{\Delta \theta \Delta\varphi \, N} \sum_{n=1}^N e^{-i\xi X^n_{3, h,\eps}(t)}\mathbf{1}_{\big(\theta^n_{h,\eps}(t) \in(\theta_m, \theta_{m+1}), \,\varphi^n_{h,\eps}(t) \in (\varphi_{m'},\varphi_{m'+1})\big)} 
\]
where $\theta^n_{h,\eps}$ and $\varphi^n_{h,\eps}$ are respectively the polar and azimuthal angles for $\hat K^n_{h,\eps}$, and where $ (D^n_{h,\eps})_n =  (X^n_{h,\eps}, \hat K^n_{h,\eps})_n$ is a sample of $D_{h,\eps}$ introduced in Section \ref{sec:MC_meth}.

In the following numerical illustrations we consider $a = 0.002$ in the RTE, and set $\Delta \theta = \Delta \varphi = 0.05$, $\eps = 0.1$ and $h = 0.5/\bar \Lambda_\eps\simeq 12.6$ for the approximation parameters. Note that these choices for $\eps$ and $h$ are providing us with a good accuracy at a very low computational cost as we will see. Such values may have to be decreased in other setups and when considering different observables. For instance, in Section \ref{sec:var_alp} where  $\alpha$ is varying, smaller values of $\eps$ and $h$ are needed to capture correctly the solution. 

Also, in the context of singular scattering kernels, the classical notion of scattering mean free time is not informative since it is equal to $0$ (see \eqref{eq:mft}). Instead, we define a characteristic time using the inverse of the second eigenvalue of $\calQ$, i.e. the first non zero eigenvalue, and set $t_c = - \frac{1}{\lambda_1}$. 
We refer to Figure \ref{fig:vp} for the evolution of $t_c$ w.r.t. $\alpha$. 

\begin{figure}[h!]
\begin{center}
\includegraphics[scale=0.30]{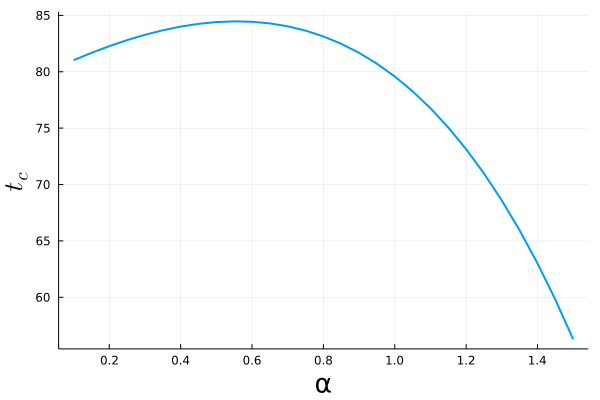} 
\end{center}
\caption{ \label{fig:vp} Illustration of the evolution of $t_c$ w.r.t. $\alpha$.}
\end{figure}
In our setting, $t_c \simeq 79.6$ (for $\alpha=1$), which is about six times the stepsize $h$ needed to capture the diffusive correction. Also, since $\eps$ is not too small, this correction plays a significant role in obtaining the correct dynamics.
\begin{figure}[h!]
\begin{center}
\includegraphics[scale=0.30]{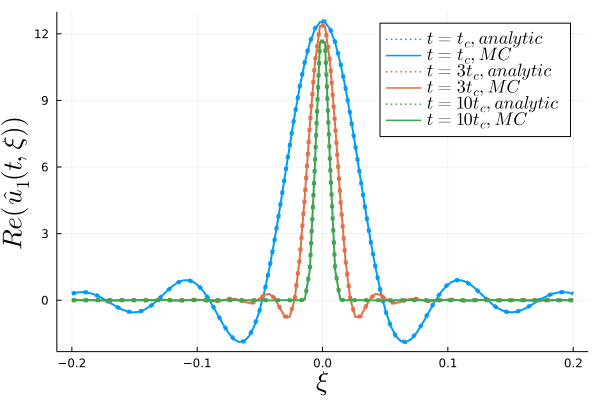} 
\includegraphics[scale=0.30]{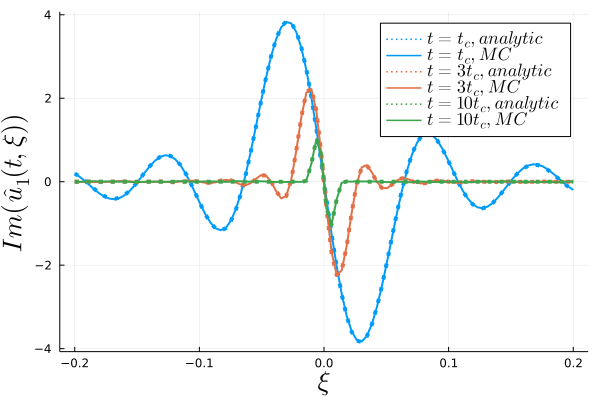} 
\end{center}
\caption{ \label{fig:testcase1} Comparisons of the real and imaginary parts of $\hat u_1(t,\xi)$ for three observation times and for $\alpha=1$. The grid in $\xi$ range from $-0.2$ to $0.2$ with $100$ discretization points and we run $N=2.4 \times 10^6$ particles.}
\end{figure}

In Figure \ref{fig:testcase1}, we compare, for $\alpha=1$, the real and imaginary parts of the observable 
\[\hat u_1(t,\xi) := \int_{0}^\pi \int_{0}^{2\pi} \tilde u^L(t,\xi,\theta,\varphi)\sin(\theta) d\theta d\varphi \qquad \textrm{with} \qquad 
4\pi \, \Delta\theta \Delta \varphi \sum_{m,m'} \tilde u^L_N (t, \xi, m, m'),\]
for three values of $t$. In Figure \ref{fig:testcase2}, we compare the real and imaginary parts of
\[\hat u_2(t,\xi,\theta) := \int_{0}^{2\pi} \tilde u^L(t,\xi,\theta,\varphi) d\varphi \sin(\theta) \qquad \textrm{with} \qquad 4\pi \, \Delta \varphi\sum_{m'} \tilde u^L_N (t, \xi, m, m'),\]
for three values of $\xi$.

\begin{figure}[h!]
\begin{center}
\includegraphics[scale=0.3]{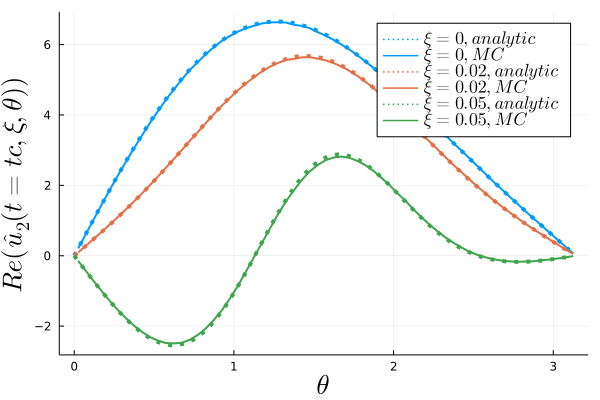} 
\includegraphics[scale=0.3]{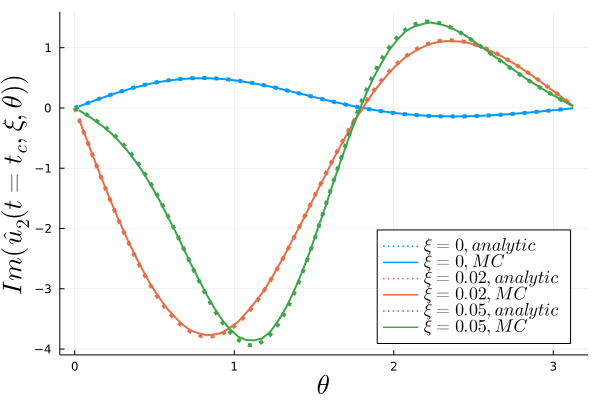} 
\\
\includegraphics[scale=0.3]{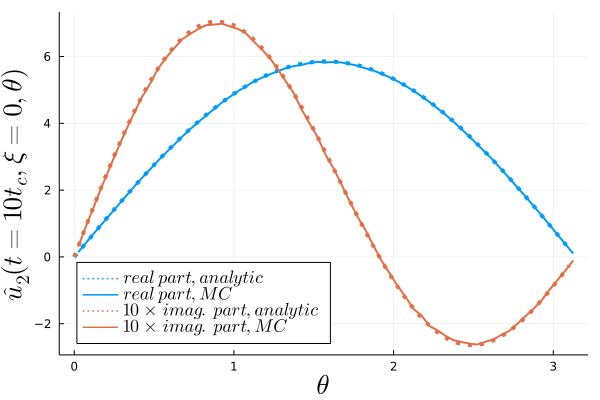} 
\end{center}
\caption{\label{fig:testcase2} Comparisons of the real and imaginary parts of $\hat u_2(t,\xi,\theta)$ for $\alpha=1$, for three values of $\xi$ if $t=t_c$, and for $\xi=0$ if $t=10t_c$. We run $N=2.4 \times 10^6$ particles for the top two pictures and $N=24 \times 10^6$ for the third one.}
\end{figure}

In Figure \ref{fig:testcase3}, we compare
\[\hat u_3(t=2t_c,\xi=0.02,\theta) := \int_{0}^{2\pi} \tilde u^L(t=2tc,\xi=0.02,\theta,\varphi) d\varphi \sin(\theta) \qquad \textrm{with} \qquad 4\pi \, \Delta \varphi\sum_{m'} \tilde u^L_N (t=2t_c, \xi=0.02, m'),\]
for three values of $\alpha$.

\begin{figure}[h!]
\begin{center}
\includegraphics[scale=0.3]{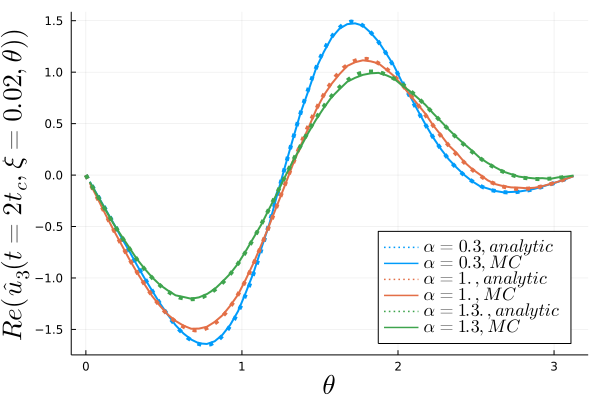} 
\includegraphics[scale=0.3]{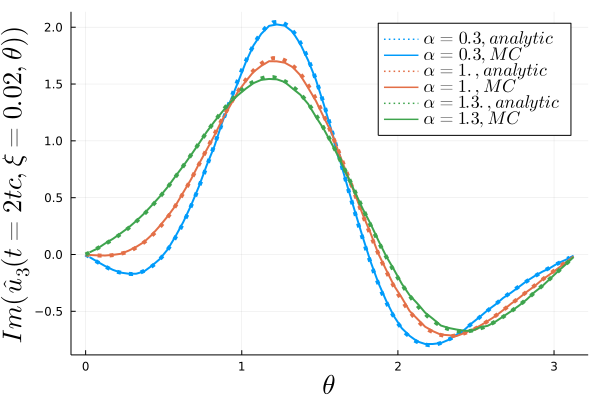} 
\end{center}
\caption{\label{fig:testcase3} Comparisons of the real and imaginary parts of the observable $\hat u_3(t=2t_c,\xi=0.02,\theta)$ for three values of $\alpha$. We run $N=24 \times 10^6$ particles.}
\end{figure}

In all these illustrations, and despite somewhat fairly large values for $\eps$ and $h$, we observe a very good agreement between the Monte Carlo results and the semi-analytic calculations.

\subsection{Role of the correction}

In this section, we highlight the role of the correction provided by the diffusion over the unit sphere w.r.t. the $\hat k$-variable. To this end, we compare the following observables obtained from the semi-analytic solution
\[
u_4(x_3) = \int_0^{3t_c} \tilde u^L (t,x_\perp,x_3,\theta,\varphi) \sin(\theta) \, dt \, dx_\perp \, d\theta \, d\varphi, \qquad x:=(x_\perp,x_3)\in\mathbb{R}^2\times \mathbb{R},
\]
with the ones obtained with our MC method, with and without this diffusive correction, and for various values of $\alpha$, $\varepsilon$ and $h$. The grid in $z$ range from $-300$ to $300$ with size $2^8$ and we run $N = 300 \times 10^6$ particles. According to \eqref{eq:RMSE} and \eqref{eq:rel_err}, the number of samples $N$ is taken large enough so that the RMSE \eqref{def:RMSE} of the MC estimation is of order $10^{-5}$ and the relative MC error \eqref{def:rel_err} is of order $0.03\%$ where $u_4$ takes values of order as low as $10^{-3}$. With this choice of $N$, we can focus our attention on the role played by $\eps$ and $h$ in the approximation.
\begin{figure}[h!]
\begin{center}
\includegraphics[scale=0.30]{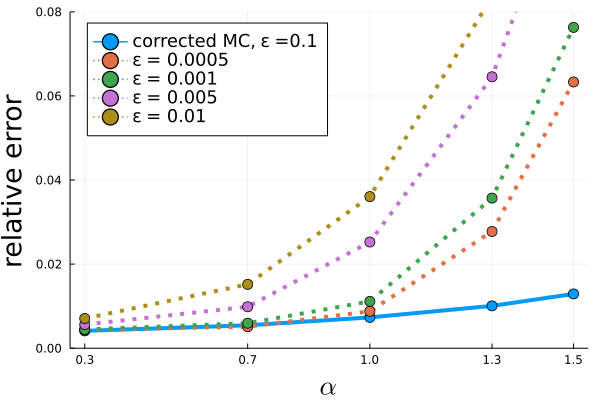} 
\includegraphics[scale=0.30]{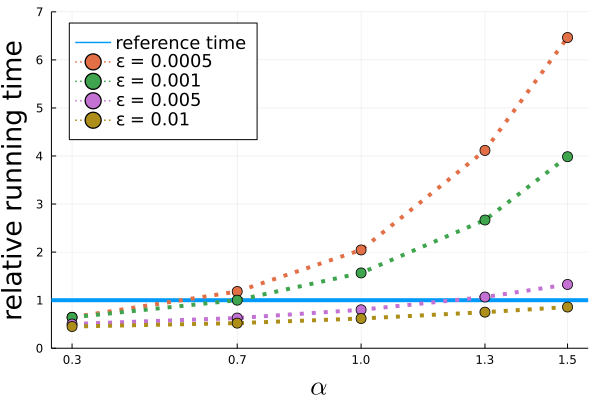} 
\end{center}
\caption{\label{fig:testcase4} Illustration of the relative error $Err_{\eps,\alpha}$ and running time of the MC method with and without a diffusive correction. The reference time in the right picture is the one of corrected method with $\eps=0.1$. }
\end{figure}
In Figure \ref{fig:testcase4}, we represent the relative error
\[
Err_{\eps,\alpha} := \max_z \frac{|u_4(z) - u_{4,MC}(z)|}{\max_z u_4(z)},
\]
for various sizes of the cutoff $\eps$, and where $u_{4,MC}(z)$ is the MC approximation to $u_4(z)$. The left picture illustrates the evolution of the relative error for various $\eps$. The blue curve corresponds to the corrected MC with $\eps = 0.1$ (with still a fairly large stepsize $h=0.5/\bar \Lambda_\eps$) providing at most a relative error slightly larger than $1\%$. The other curves correspond to the noncorrected MC method for several values of $\eps$. The corrected MC consistently yields a better accuracy than the noncorrected version, and even in weakly singular cases where $\alpha$ is less than one, a very small value of $\eps$ (red and green curves) is necessary to match the accuracy of the corrected method. The right picture illustrates the evolution of the relative running time of the noncorrected method w.r.t. the corrected one. For values of $\alpha$ less than 0.7 (weakly singular kernels), corrected and noncorrected methods have similar computational times for comparable accuracy, while in the case of singular kernels with $\alpha \geq 1$, the noncorrected methods yield a much larger cost and a much lower accurary. 

\begin{figure}[h!]
\begin{center}
\includegraphics[scale=0.30]{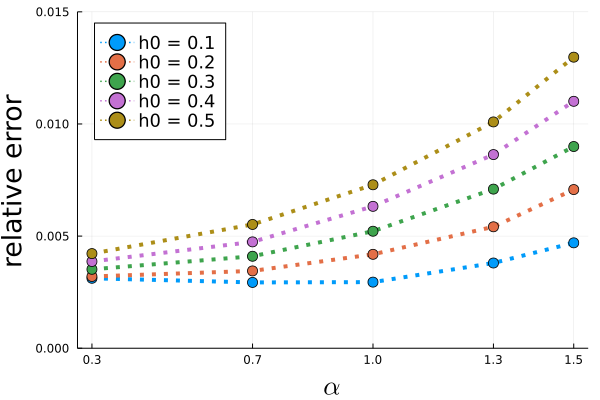} 
\includegraphics[scale=0.30]{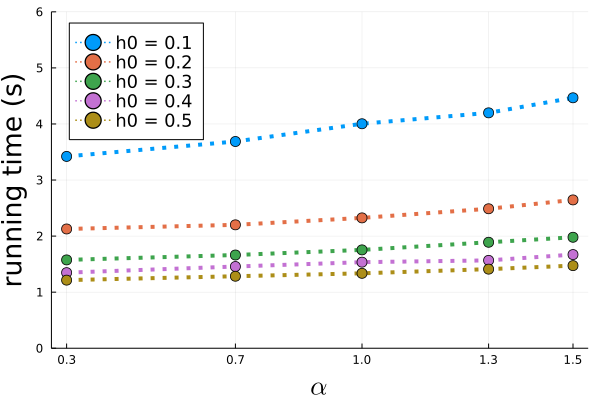} 
\end{center}
\caption{\label{fig:testcase5} Illustration of the relative error $Err_{\eps,\alpha}$ and running time of the (corrected) Monte Carlo method.}
\end{figure}

In Figure \ref{fig:testcase5}, we illustrate the precision and running time sensitivity of the (corrected) MC method w.r.t. the stepsize $h = h_0 /\bar \Lambda_\eps$. As expected, we obtain a better precision for smaller stepsizes but at the price of a longer running time. These effects are amplified as $\alpha$ increases due to the increasing strength of the diffusion correction. In what follows, we select $h_0=0.3$ since this yields a relative error less than $1\%$ for a wide range of $\alpha$'s while not changing significantly the running time.

\begin{figure}[h!]
\begin{center}
\includegraphics[scale=0.30]{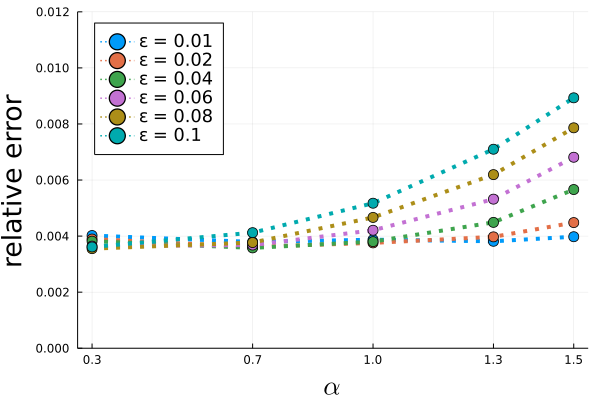} 
\includegraphics[scale=0.30]{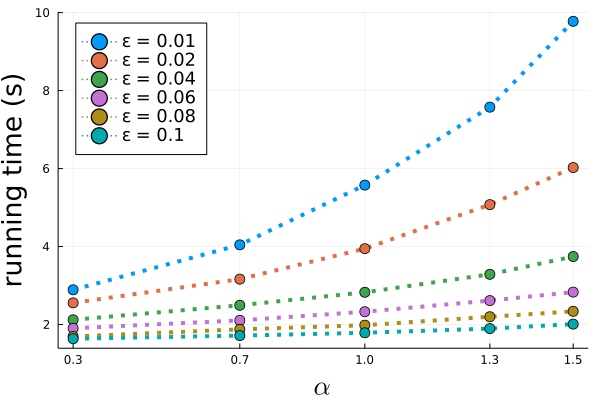} 
\end{center}
\caption{\label{fig:testcase6} Illustration of the relative error $Err_{\eps,\alpha}$ and running time of the (corrected) MC method.}
\end{figure}

In Figure \ref{fig:testcase6}, we depict the precision and running time sensitivity w.r.t. the cutoff parameter $\eps$, and observe the same phenomena as in the case of the stepsize $h$. The parameter $\eps$ defines not only the accuracy of the diffusion correction, but also the average number of jumps, and as a consequence the running time increases as $\eps$ decreases as in the case of the noncorrected Monte Carlo method.


\section{Numerical illustrations}\label{sec:numilla}

\subsection{The role of $\alpha$}\label{subsec:role_alpha}

In this section, we highlight the effects of the kernel singularity on the energy density. We consider  a constant $\alpha$, with $a=0.002$ in this section. Our setting is depicted in Figure \ref{fig:setting_sect_7}. The spatial variable $x$ is decomposed into a main propagation axis $x_3$ and a transverse plane  $x_\perp$, i.e. $x = (x_\perp, x_3)\in \mathbb{R}^2 \times \mathbb{R}$. The same notation holds for the direction variable $\hat k = (\hat k_\perp, \hat k_3)\in\mathbb{S}^2$.
\begin{figure}[h!]
\begin{center}
\includegraphics[scale=0.20]{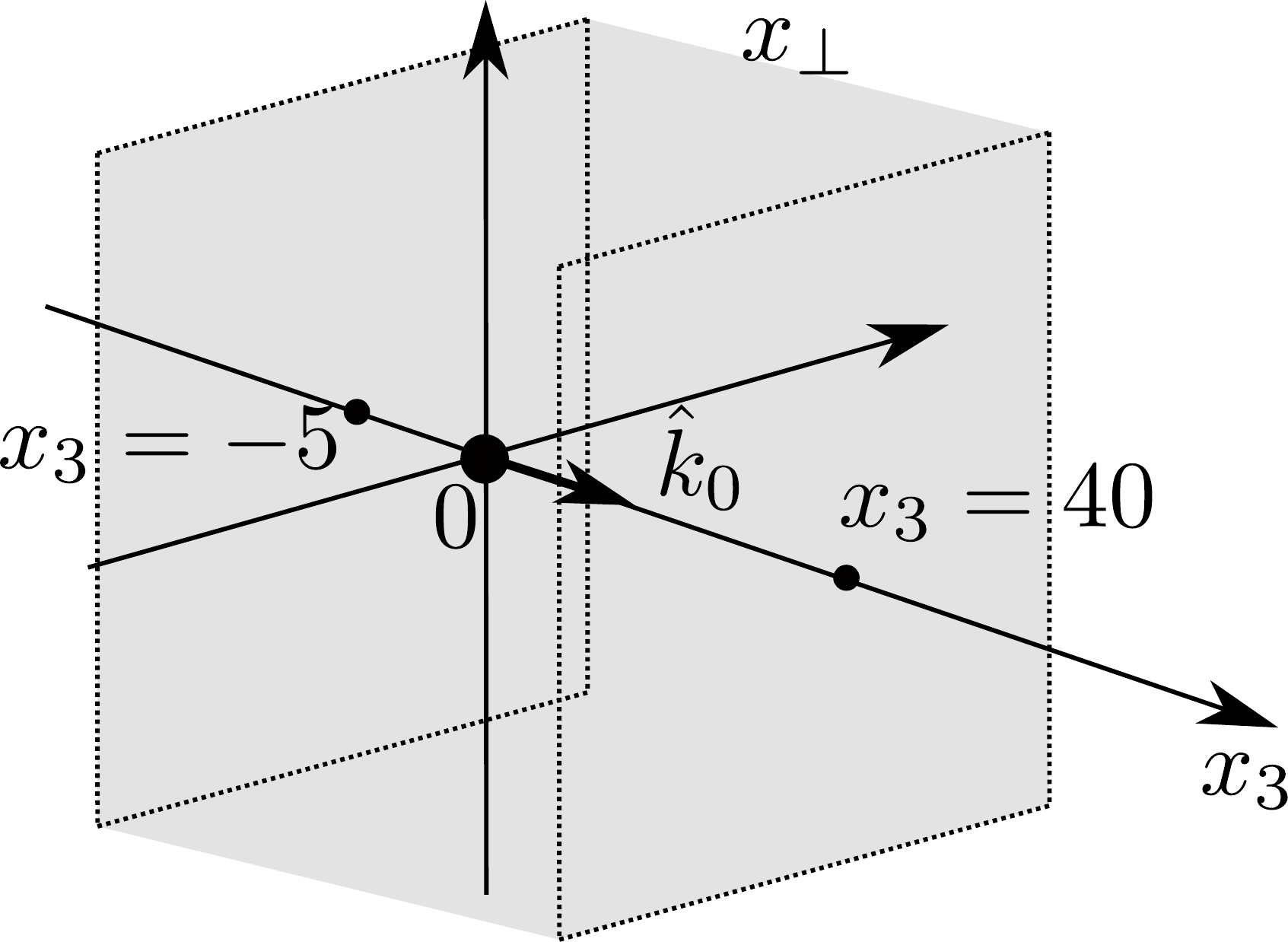} 
\end{center}
\caption{\label{fig:setting_sect_7} Illustration of the numerical setting.}
\end{figure}
We choose an initial condition for \eqref{RTE} of the form
\[
u_0(x,\hat k) = \delta(x) \delta(\hat k - \hat k _0), \qquad 
\hat k _0 = (0, 0, 1),
\]
modeling a source located at $x=0$ and embedded in the random medium, and emitting in the forward $x_3$-direction. We set a function $\lambda$ of the form $\lambda(x) = \mathbf{1}_{(-5,40)}(x_3)$, that defines a scattering layer between $x_3=-5$ and $x_3=40$. In such a configuration, both transmitted and reflected quantities at $x_3=40$ and $x_3 = -5$ are of interest. With our particular choice for $\hat k_0$, what is obtained at $x_3=-5$ is purely due to backscattering.

In the following two subsections, the MC estimations are obtained using $N=1\times 10^9$ particles and a diffusion stepsize $h=0.3/\bar \Lambda_\eps$. We set $\eps=0.01$ for the calculation of transmitted quantities, and $\eps=0.1$ for the reflected ones. For any value of $\alpha$, the observation time we consider is $T=4t_c$, for $t_c$ the critical time computed for $\alpha=1$. 

In the transmission case and when $\eps$ is too large, the mean free time is large as well and it is possible that particles escape the slab without undergoing any jumps, leading to inaccurate results. Hence the choice $\eps=0.01$. A larger value of $\eps$ is acceptable in the calculation of the reflected quantities since the particles exiting early would not have traveled to the plane located at $x_3=-5$, and the error is reduced compared to the transmission case. 

The running times for the time-integrated transmitted ($\eps=0.01$) and reflected ($\eps=0.1$) signals for different values of $\alpha$ are the following:

\begin{center}
\begin{tabular}{c|c|c|c|c|c}
running time (s)  & $\alpha=0.3$ & $\alpha=0.7$ & $\alpha=0.1$ & $\alpha=1.3$ & $\alpha=1.5$ \\
\hline
 $\varepsilon=0.01$  & 9.38 & 15.21 & 24.54 & 42.74 & 65.39 \\
 \hline
 $\varepsilon=0.1$  & 3.61 & 3.72 & 4.12 & 4.92 & 6.01
\end{tabular}
\end{center}

All these running time measurements account also for the transfer of the resulting arrays from the device to the host. We clearly observe a significantly larger running time for smaller values of $\eps$ and large values of $\alpha$. This is due to the increase in scattering events as the mean free time decreases. These computational times correspond to the cost for the MC method to reach the expected accuracy for fixed $\varepsilon$'s and $\alpha$'s. With our choice of $N=10^9$, the RMSEs \eqref{def:RMSE} are of order $10^{-4}$ (resp. $10^{-5}$) for the transmitted (resp. reflected) observables, and the relative errors are of order $1\%$ (resp. $0.1\%$) for the transmitted (resp. reflected) observables taking values of order $10^{-3}$ (resp. $10^{-4}$ upto $10^{-5}$).

\subsubsection{Energy at the boundaries of the transverse plane}

In what follows, the (time-integrated) transverse reflected and transmitted energy are defined by
\[
F^T_{tr}(x_\perp) := \int_0^T dt \int_{\mathbb{S}^2}\sigma(d\hat k)\,  u(t, x_\perp, x_3 = 40, \hat k)  \qquad\text{and}\qquad F^T_{ref}(x_\perp) := \int_0^T dt \int_{\mathbb{S}^2}\sigma(d\hat k)\, u(t, x_\perp, x_3 = -5, \hat k).
\]
The MC estimators for these quantities are given respectively by
\[\hat F^T_{tr}(m,n) := \frac{1}{\Delta x_\perp \,N}\sum_{j=1}^N \mathbf{1}_{\big(X^{j,\perp}_{h,\eps}(\tau^j) \in \square_{mn}, \, X^{j}_{3, h,\eps}(\tau^j) > 40 \big)}, \; \hat F^T_{ref}(m,n) := \frac{1}{\Delta x_\perp \, N}\sum_{j=1}^N \mathbf{1}_{\big(X^{j,\perp}_{h,\eps}(\tau^j) \in \square_{mn}, \, X^{j}_{3, h,\eps}(\tau^j) < -5 \big)}\]
where 
\[
\tau^j:=\inf(t\in [0,T]:\, X^{j}_{3, h,\eps}(t) > 40 \quad\text{ or }\quad X^{j}_{3, h,\eps}(t)<-5),
\]
is the first time the $j$-th particle exits the slab. Note that once a particle escapes, it cannot reenter it since it propagates freely. Above, $(\square_{mn} )_{m,n}$ is a uniform square grid of the traverse plane to the $x_3$-axis. All squares in the grid have area $\Delta x_\perp$. Note that the grid can be different for the transmitted and reflected signals. We have considered for the transverse variable of the transmitted energy a uniform grid over a detector of size $[-10,10]\times [-10,10]$ centered around the $x_3$-axis, and over a detector of size $[-50,50]\times [-50,50]$ for the reflected energy. For both cases, we chose $128\times 128$ grid points. The principle of these estimators is simply to count the number of particles that exit the slab before time $T$ and to record their position in the transverse plane.

In Figure \ref{fig:flux}, we illustrate the transmitted and reflected energy flux, for several values of $\alpha$. We represent the variations w.r.t. the first coordinate of $x_\perp = (x_1,x_2)$, and for two values of $x_2$.

\begin{figure}[h!]
\begin{center}
\includegraphics[scale=0.35]{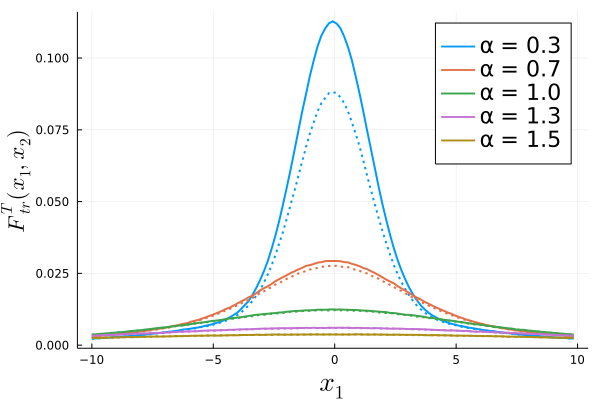}
\includegraphics[scale=0.35]{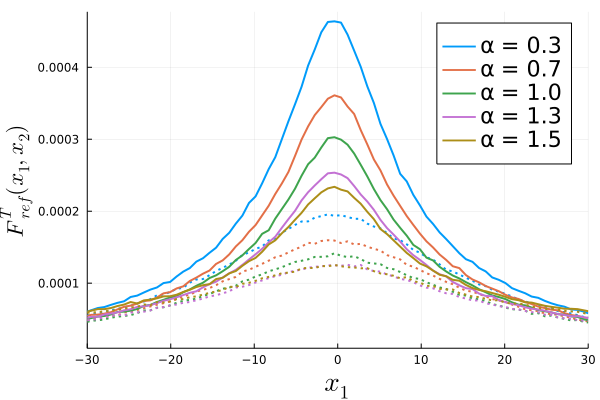} 
\end{center}
\caption{\label{fig:flux} Illustration of the energy at the boundaries $F^T_{tr}(x_\perp)$ and $F^T_{ref}(x_\perp)$ w.r.t. $x_1$ with $x_2=0$ (solid line) and $x_2 = 1$ (dotted line) for the left picture and $x_2 = 10$ (dotted line) for the right one. The RMSEs \eqref{def:RMSE} are less than $6.5\times 10^{-4}$ (resp. $2.6\times 10^{-5}$) on the left picture (resp. right picture), while the relative errors \eqref{def:rel_err} are less than $0.6\%$ for the left picture (resp. $1.3\%$ for the right picture) for values of the observables as low as $10^{-3}$ (resp. $10^{-5}$).}
\end{figure}

One can observe that at fixed times, the larger the $\alpha$, the more diffuse are the signals. Indeed, as $\alpha$ increases, the jump intensity $\Lambda_\eps$ (in other words the number of scattering events) increases as well as the strength of the diffusive correction $\sigma_\eps$ (see Figure \ref{fig:evol}).

\begin{figure}[h!]
\begin{center}
\includegraphics[scale=0.30]{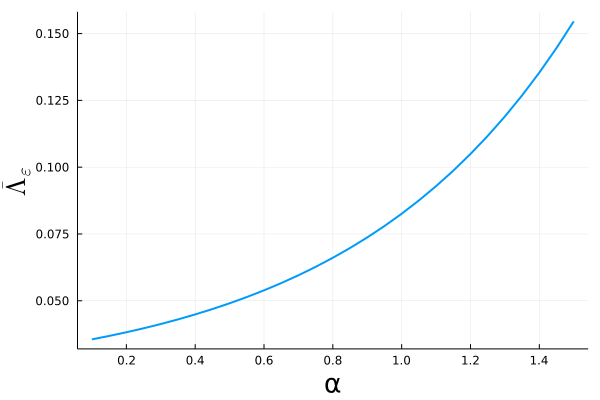} 
\includegraphics[scale=0.30]{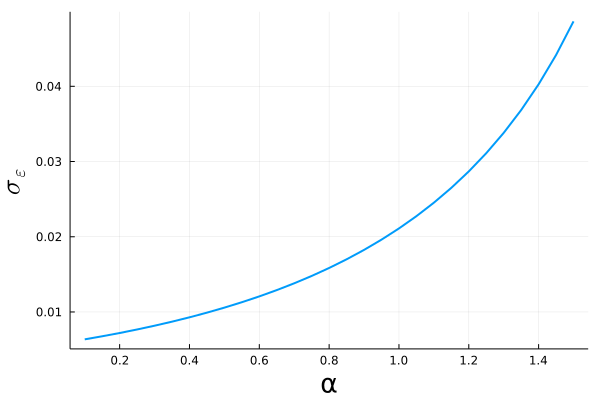} 
\end{center}
\caption{\label{fig:evol} Illustrations of the evolution w.r.t. $\alpha$ of $\Lambda_\eps$ defined by \eqref{eq:Lambda} and $\sigma_\eps$ defined by \eqref{def_sig}. Here, $a=0.002$ and $\eps = 0.01$.}
\end{figure}

\subsubsection{Time evolution of the exiting energy}

Here, we are interested of the time evolution of the energy exiting the slab, and we define the (integrated) reflected and transmitted energy by
\[
\calF_{tr}(t) := \int_{\R^2} dx_\perp \int_{\mathbb{S}^2}\sigma(d\hat k)\,  u(t, x_\perp, x_3 = 40, \hat k)  \qquad\text{and}\qquad \calF_{ref}(t) := \int_{\R^2} dx_\perp \int_{\mathbb{S}^2}\sigma(d\hat k)\,  u(t, x_\perp, x_3 = -5, \hat k).
\]
The MC estimators for these two quantities are given by
\[\hat \calF_{tr}(n) := \frac{1}{dt \, N}\sum_{j=1}^N \mathbf{1}_{\big( \tau^j \in (t_n, t_{n+1}], \, X^{j}_{3, h,\eps}(\tau^j) > 40  \big)}\qquad\text{and} \qquad \hat \calF_{ref}(n) := \frac{1}{dt \, N}\sum_{j=1}^N \mathbf{1}_{\big(\tau^j \in (t_n, t_{n+1}], \, X^{j}_{3, h,\eps}(\tau^j) < -5 \big)}.\]
Here, $(t_{n})_{n}$ is a uniform grid of the time interval with stepsize $dt$. For the transmitted signal, we have considered the time interval $[40, 45]$ with a stepsize $dt=0.02$, and have set $[0,4t_c]$ with a stepsize $dt=0.4$ for the backscattered signal. Note that the time interval starts at $40$ for the transmitted energy, which is the travel time of the wave  (traveling at speed $c_0=1$) from the source to the plane $x_3=40$. These estimators count the number of particles that exit the slab in the time interval $(t_n, t_{n+1}]$ at each side of the slab. In Figure \ref{fig:time_flux}, we illustrate the evolution of the transmitted and reflected energy, for several values of $\alpha$. 

\begin{figure}[h!]
\begin{center}
\includegraphics[scale=0.30]{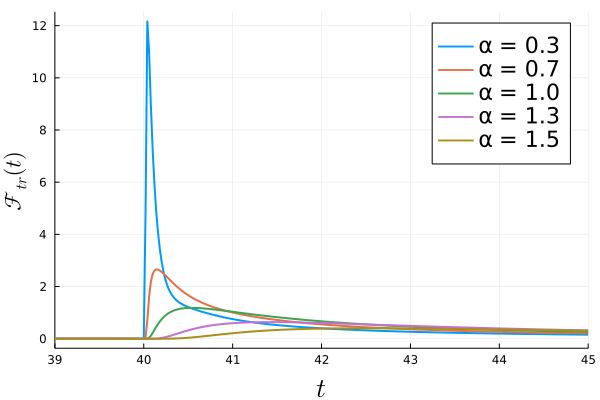} 
\includegraphics[scale=0.30]{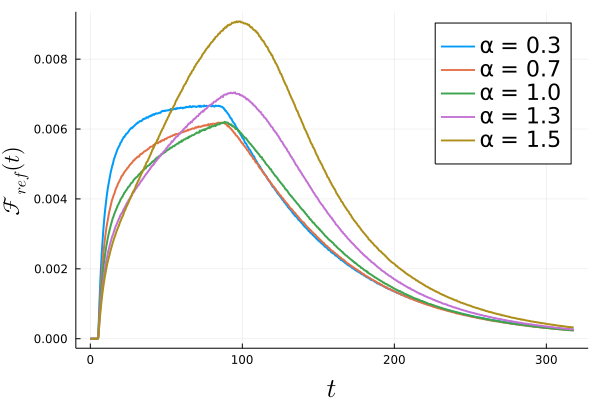} 
\end{center}
\caption{\label{fig:time_flux} Time evolution of the energy at the boundaries $\calF_{tr}(t)$ and $\calF_{ref}(t)$. The RMSE \eqref{def:RMSE} are less than $8\times 10^{-4}$ (resp. $4\times 10^{-5}$) on the left picture (resp. right picture), while the relative errors \eqref{def:rel_err} are less than $0.7\%$ for the left picture (resp. $0.5\%$ for the right picture) for values of the observables as low as $10^{-3}$ (resp. $10^{-4}$).}
\end{figure}

In the case of the transmitted signal (left), and for small values of $\alpha$, we see the arrival of the coherent wave at the proper travel time followed by the coda. When $\alpha$ increases, one notices the stronger impact of scattering and of the diffusive correction that smooths the signal out and damps its amplitude. For the largest $\alpha$, we only observe a coda. Regarding the reflected signal (right), there is only a coda for all $\alpha$ due our choice of $\hat k_0$, and one can observe two stages in the dynamics: backscattering increases up to a time of order $t_c$, about which exponentially decay due to the operator $\calQ$ takes over. 


\subsubsection{Comparison with the Henyey-Greenstein scattering kernel}

In this section,  we compare the solutions to the RTE with Henyey-Greenstein scattering kernel \fref{HG} for an anisotropy factor $g$ close to one with the solutions to \eqref{RTE_approx} with singular kernel derived from \eqref{eq:lim_HG}, that is by setting $a \equiv (1-g)/(2\pi)$ and $\alpha=1$ in \fref{def:rho}. 
Note that the value of the constant $a$ changes with $g$, and as a consequence $\bar \Lambda_\eps$, $h$, and $\sigma_\eps$ vary accordingly. To illustrate this approximation, we still consider the setting depicted in Figure \ref{fig:setting_sect_7} and the various observables introduced in the previous sections, but now at a time $T=300$.

\begin{figure}[h!]
\begin{center}
\includegraphics[scale=0.30]{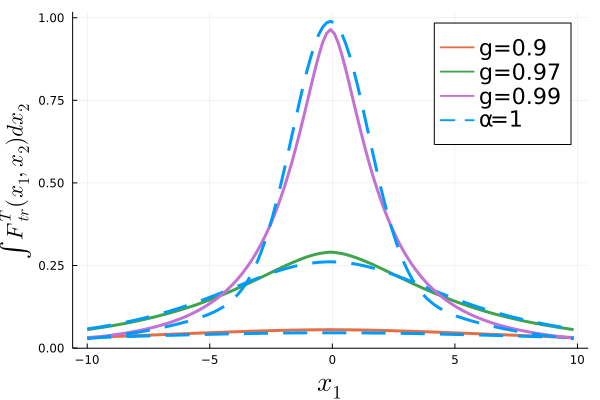} 
\includegraphics[scale=0.30]{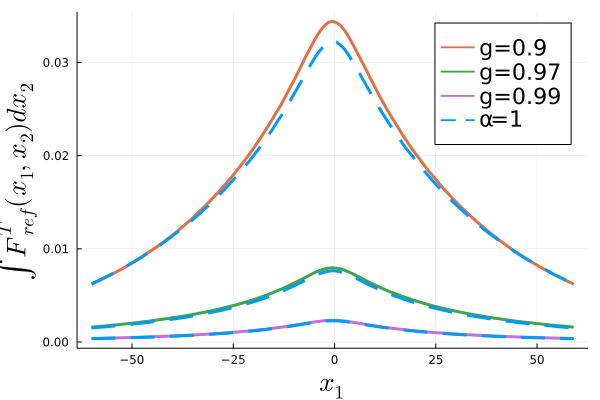} \\
\includegraphics[scale=0.30]{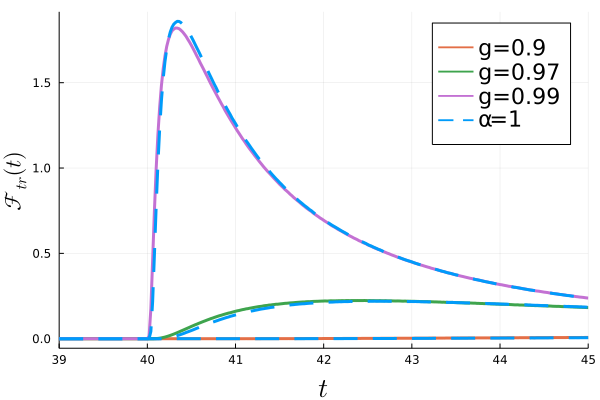} 
\includegraphics[scale=0.30]{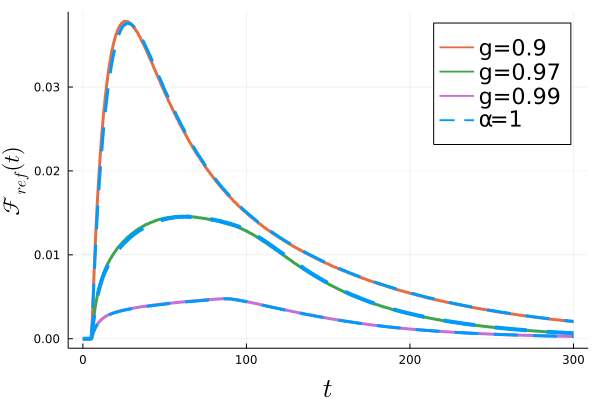}
\end{center}
\caption{\label{fig:HG} Comparison of the observables obtained using the Henyey-Greenstein scattering kernel and our singular kernel with $\alpha=1$, $T=300$, $\eps=0.01$ for the transmitted observables (left panels), and $\eps=0.1$ for the reflected ones (right panels). The RMSEs \eqref{def:RMSE} and relative errors \eqref{def:rel_err} are similar to those of Figures \ref{fig:flux} and \ref{fig:time_flux}.}
\end{figure}

We observe in Figure \ref{fig:HG} the very good agreement between the two solutions. The reflected signal is well captured by our method despite fairly large values of $\eps$ and $h$.  Also, let us mention that the computational cost is decreasing as the anisotropic parameter $g$ is getting close to $1$, as the overall jump intensity decreases in this case in the highly peaked regime $g \to 1$. Regarding the transmitted signal, $\eps$ (and then $h$) needs to be lowered for an accurate approximation, as explained at the beginning of Section \ref{subsec:role_alpha}. 

The RTE with a Henyey-Greenstein scattering kernel is simulated with a standard MC method. Compared to our method, its computational costs to achieve RMSEs of order $10^{-4}$ and $10^{-5}$ for respectively the transmitted and reflected observables are the following:
\begin{center}
\begin{tabular}{c|c|c|c}
running time (s)   & $g=0.97$ & $g=0.98$ & $g=0.99$  \\
\hline
HG kernel  & 8.3 & 7.6 & 6.0 \\
\hline
singular kernel, $\eps = 0.01$ & 13.9 & 6.7 & 2.0\\
\hline
singular kernel, $\eps = 0.1$ & 2.5 & 1.5 & 0.7
\end{tabular}
\end{center}
Here, $\eps=0.01$ is considered for the transmitted observables, while we set $\eps=0.1$ for the reflected ones. According to this table, lower computational times are observed with our method for the three considered $g$'s compared to standard MC methods for the Henyey-Greenstein scattering kernel. Our MC method provides therefore an efficient tool to simulate an RTE with a Henyey-Greenstein kernel. For transmitted observables, $g$ needs to be quite close to one to provide a significant advantage to our method.

\subsection{Varying $\alpha$ function}\label{sec:var_alp}

In this section, we investigate the influence of a varying $\alpha$ function that characterizes the strength of the singularity. We consider two situations, one inspired from optical tomography, and the second one from wave propagation through atmospheric turbulence.

\subsubsection{A two-stage model with a sphere}

We keep the setting introduced in Section \ref{subsec:role_alpha}, and add a defect with a different value of $\alpha$ to the setting. This defect is modeled by ball of radius 3 centered at the origin and where $\alpha$ is equal to $\alpha_1$. We set $\alpha\equiv 1$ in the exterior of the ball, corresponding to the peak forward regime of the Henyey-Greenstein scattering kernel. See Figure \ref{fig:setting_sect_81}. This situation models a biological tissue in which statistical properties are changing and define a region of interest for imaging. 

\begin{figure}[h!]
\begin{center}
\includegraphics[scale=0.20]{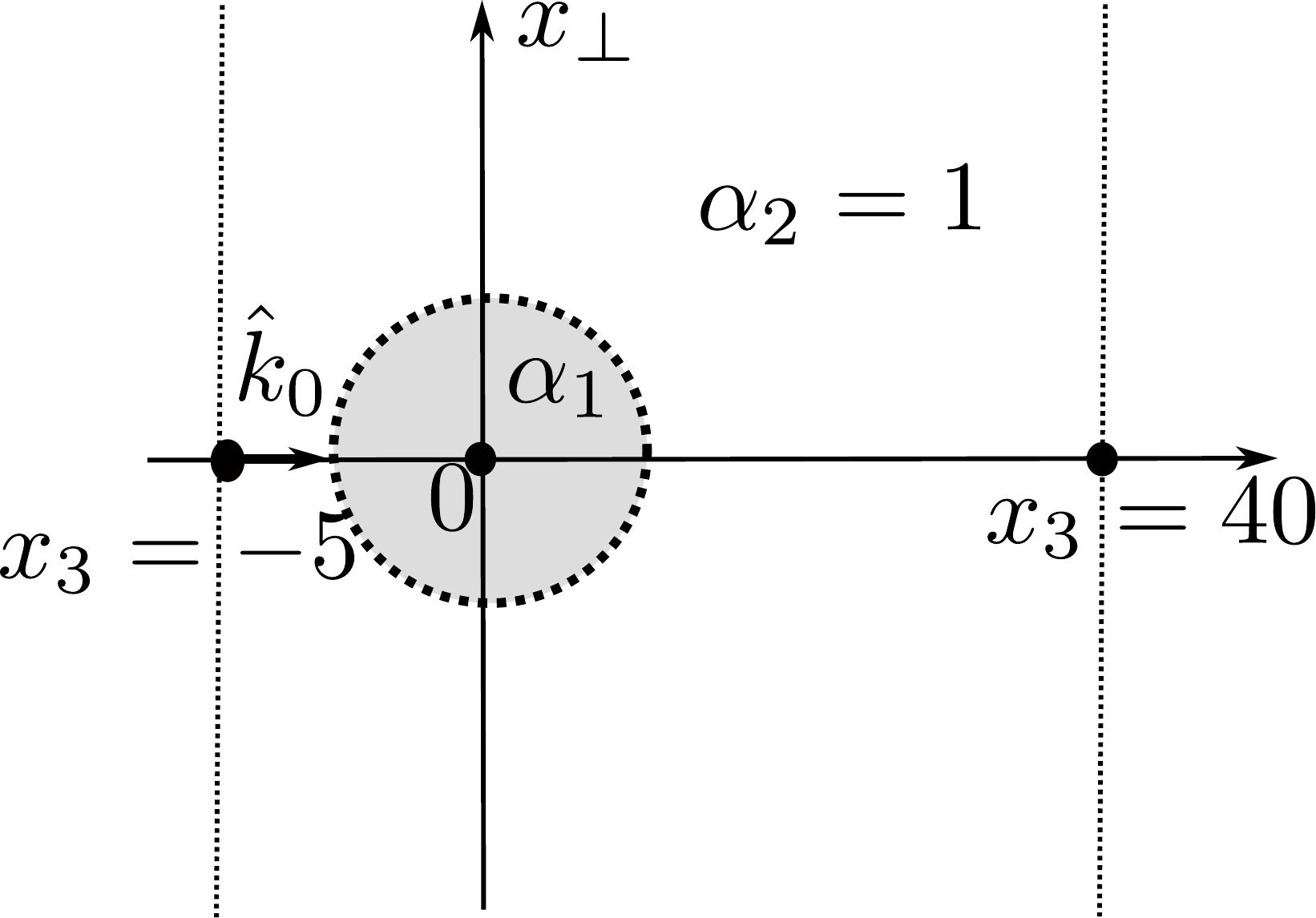}
\end{center}
\caption{\label{fig:setting_sect_81} Illustration of the setting with $\lambda = \mathbf{1}_{\{x_3\in (-5, 40)\}}$ and $\alpha(x) = \alpha_1 \mathbf{1}_{x\in B} + 1\cdot  \mathbf{1}_{x\not\in B}$ where $B$ is a ball centered at $0$ with radius 3.}
\end{figure}

We illustrate in Figure \ref{fig:tomo} the impact of the introduction of the defect on the observables introduced in Section \ref{subsec:role_alpha}. The impact is stronger on transmitted observables and quite significant, giving then the possibility to identify the defect with $\alpha=\alpha_1$ inside the scattering medium. Reflected quantities tend to be less sensitive to the presence of the defect since a fraction of the signal is backscattered before reaching it.  

  
\begin{figure}[h!]
\begin{center}
\includegraphics[scale=0.3]{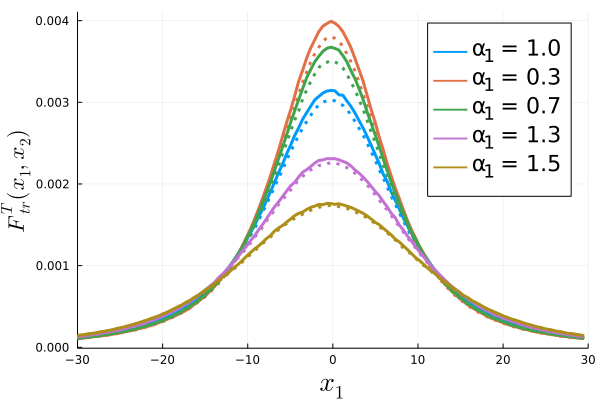} 
\includegraphics[scale=0.3]{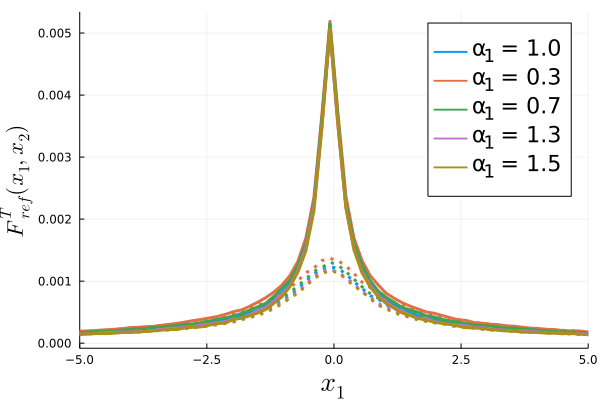} \\
\includegraphics[scale=0.3]{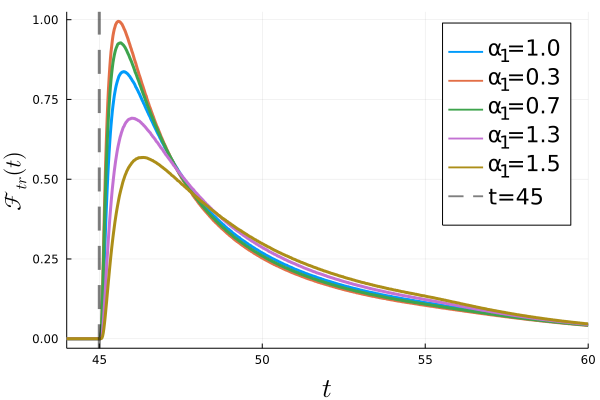} 
\includegraphics[scale=0.3]{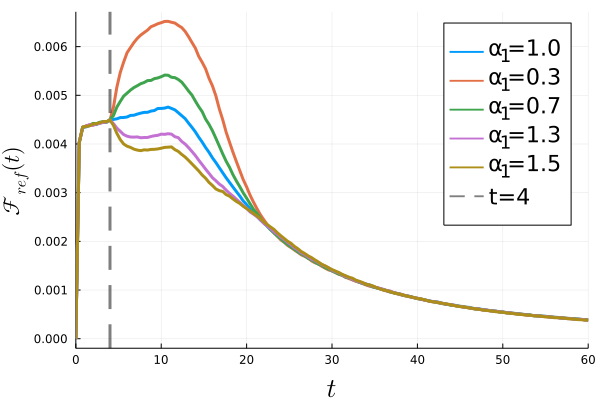}
\end{center}
\caption{\label{fig:tomo} Illustrations the transmitted (left-hand-side) and reflected (right-hand-side) observables with $T=300$, and $\eps=0.01$. For the top two pictures we set $x_2=0$ (solid lines) and $x_2=1.5$ for the top-left and $x_2=0.5$ for top-right picture (dotted lines).}
\end{figure}

\subsubsection{Non-Kolmogorov turbulences}\label{sec:NKt}

In this section, we keep once more the setting introduced in Section \ref{subsec:role_alpha}, with the difference that $\alpha$ takes three different large values depending on the altitude parametrized by $x_3$, see Figure \ref{fig:turb}:
\[
\alpha(x_3) = 5/3 \cdot \mathbf{1}_{\{x_3\leq 2\}} + 4/3 \cdot \mathbf{1}_{\{2<x_3\leq 8\}}+ 1.9\cdot \mathbf{1}_{\{8<x_3\}}.
\]
The value $5/3$ corresponds to standard Kolmogorov turbulences, while other values are associated with non-Kolmogorov turbulence models \cite{belenkii, stribling, zilberman}. In these models, it is considered that for altitudes higher than 8km, the atmospheric turbulence yields larger statistical patterns (which tend to be created by singular kernels) than around the ground (0-2km). Hence, we set $\alpha=1.9$ for altitudes greater than 8km. The function $a$ is no longer constant in these models, and for our illustrations we chose 
\[a(r) = 0.002\cdot\exp(-r^2/(2\times 0.8^2)).\]

In Figure \ref{fig:turbs}, one can notice that non-Kolmogorov turbulence yields quite different signals compared to Kolmogorov turbulence, in particular for reflected quantities. As we saw in Section \ref{subsec:role_alpha}, the higher the $\alpha$, the more diffuse is the signal which then enhances reflected signals. This explains the increased reflections in the non-Kolmogorov case.



\begin{figure}[h!]
\begin{center}
\includegraphics[scale=0.30]{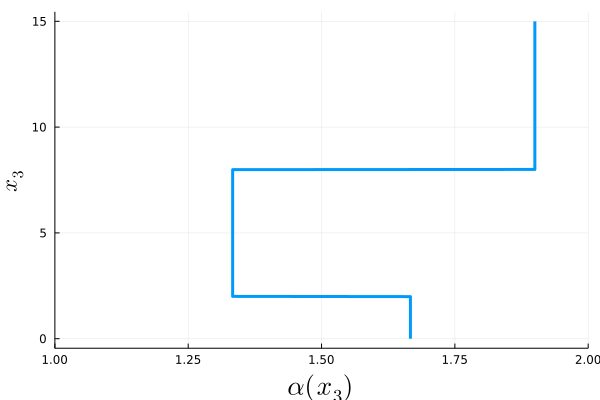} 
\end{center}
\caption{\label{fig:turb} Illustration of a three stages $\alpha$-profile for a non-Kolmogorov phase function.}
\end{figure}

\begin{figure}[h!]
\begin{center}
\includegraphics[scale=0.3]{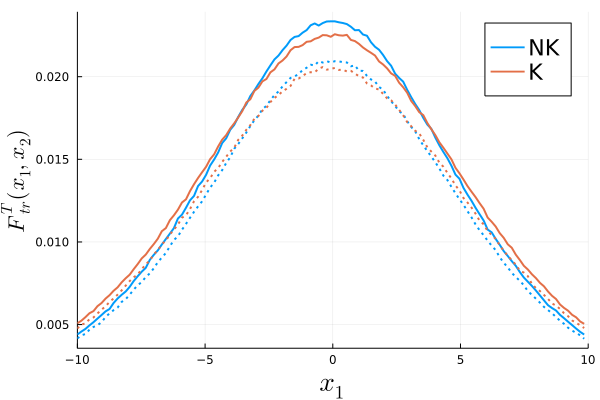} 
\includegraphics[scale=0.3]{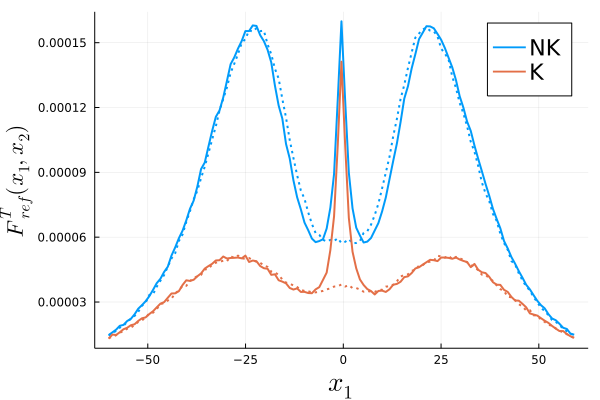} \\
\includegraphics[scale=0.3]{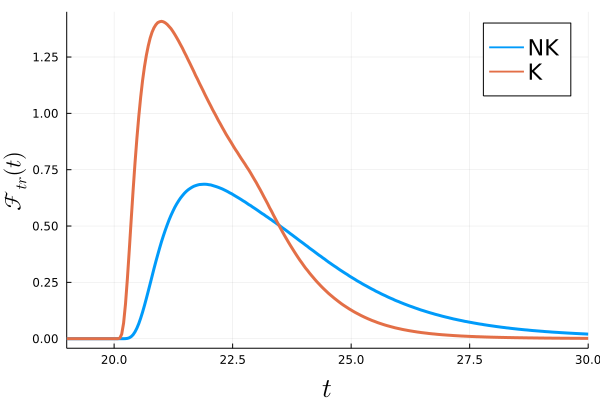} 
\includegraphics[scale=0.3]{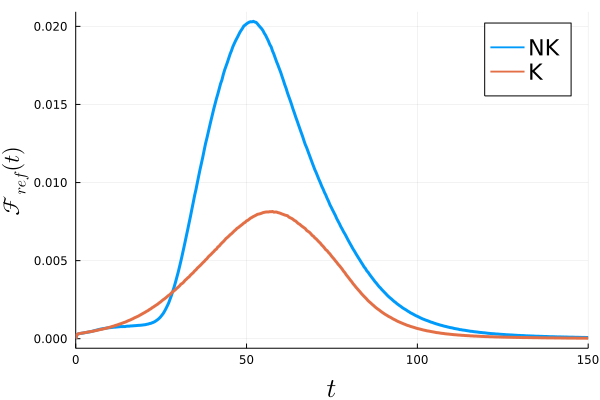}
\end{center}
\caption{\label{fig:turbs} Illustrations the transmitted (left-hand-side) and reflected (right-hand-side) observables with $T=300$, and $\eps=0.01$. For the top two pictures we illustrate $x_2=0$ (solid lines) and $x_2=2$ for the top-left and $x_2=5$ for top-right picture (dot line).}
\end{figure}

\section{Proofs} \label{secproofs}

This section is dedicated to the proof of Proposition \ref{prop:approx_RTE}, describing the approximation of the RTE \eqref{RTE} by \eqref{RTE_approx} where the small jumps have been replaced by a diffusion term, Proposition \ref{rep_prob2}, providing the probabilistic representation to \eqref{RTE_approx}, and Theorem \ref{mainth}, justifying the overall MC method involving a discretization scheme for the diffusion part.

\subsection{Proof of Proposition \ref{prop:approx_RTE}}\label{proof:prop_approx_RTE}

 
Let $v_\eps := u_\eps - u$, so that $v_\eps(t=0)=0$. We have 
\[\begin{split}
    \frac{d}{dt} \|v_\eps(t)\|^2_{L^2(\mathbb{R}^3\times \mathbb{S}^2)} & = 2 <\partial_t v_\eps(t), v_\eps(t)>_{L^2(\mathbb{R}^3\times \mathbb{S}^2)}\\
    &= 2 <(\sigma_{\eps}^2\Delta_{\mathbb{S}^2}+\mathcal{L}^\eps_{>}) v_\eps(t), v_\eps(t)>_{L^2(\mathbb{R}^3\times \mathbb{S}^2)} +  2 < (\sigma_{\eps}^2\Delta_{\mathbb{S}^2} - \mathcal{L}^\eps_{<})u, v_\eps(t)>_{L^2(\mathbb{R}^3\times \mathbb{S}^2)}.
\end{split}\]
Since $\Delta_{\mathbb{S}^2}$ is a nonpositive operator, we have 
\[\begin{split}
<(\sigma^2_{\eps}\Delta_{\mathbb{S}^2}+\mathcal{L}^\eps_{>}) v_\eps(t), v_\eps(t)>_{L^2(\mathbb{R}^3\times \mathbb{S}^2)}  &\leq - \frac{1}{2} \iiint_{\mathbb{R}^3\times \mathbb{S}^2\times \mathbb{S}^2 } dx\, \sigma(d\hat p)\, \sigma(d\hat k) \frac{\lambda(x)}{2^{1+\alpha(x)/2}} \int_{S^\eps_>} \rho(x, \hat p \cdot \hat k)(f(x, \hat p) - f(x, \hat k))^2\\& \leq 0.
\end{split}\]
We then obtain
\[
\sup_{t\in[0,T]}\|v_\eps(t)\|^2_{L^2(\mathbb{R}^3\times \mathbb{S}^2)}  \leq 2 \int_0^T \|(\sigma^2_{\eps}\Delta_{\mathbb{S}^2} - \mathcal{L}^\eps_{<})u(t)\|^2_{L^2(\mathbb{R}^3\times \mathbb{S}^2)} dt,
\]
which concludes the proof using the following lemma.
\begin{lemma}\label{lemma:approx_sig} Let $0<\eps<\eps_0<1$. Then, for any $f\in  L^2_x(\mathbb{R}^3,\mathcal{C}^4_{\hat k}(\mathbb{S}^2))$, we have
\[
\|(\calL^\eps_{<} - \sigma^2_{\eps} \Delta_{\mathbb{S}^2}) f\|_{L^{2}(\mathbb{R}^3 \times \mathbb{S}^2)} \leq \eps^{2-\alpha_M/2} \, E(f)
\]
where, with $\check{f} (x,v) := f(x, v/|v|)$ for $ v \in \mathbb{R}^3$,
\begin{equation}\label{eq:def_C_eps}
\begin{split}
 E(f) & := \frac{\pi}{3(1-\eps_0)^6}  \sup\lambda \sup a \sup_{ |h| \leq r_{\eps_0}}\|D^4_k \check f (\cdot, \cdot +  h)\|_{L^2(\mathbb{R}^3\times \mathbb{S}^2)} \\
 & + \Big( \frac{6 \pi}{(1-\eps_0)^3} \sup a + 2^{4}\pi \sup_{v\in[0,2\sqrt{2\eps_0}]} |a''(v)|\Big)  \sup\lambda \, \|\Delta_{\mathbb{S}^2} f\|_{L^2(\mathbb{R}^3\times \mathbb{S}^2)}.
\end{split}
\end{equation}
\end{lemma}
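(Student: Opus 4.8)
My plan is to prove the pointwise-in-$(x,\hat k)$ estimate first, and then take $L^2_{x,\hat k}$ norms. Fix $(x,\hat k)$. Writing $r=|\hat p-\hat k|=\sqrt{2(1-\hat p\cdot\hat k)}$, the kernel is $\rho(x,\hat p\cdot\hat k)=2^{1+\alpha(x)/2}a(r)/r^{2+\alpha(x)}$, so that the prefactor cancels and $\calL^\eps_{<}f$ weights the increment $f(x,\hat p)-f(x,\hat k)=\check f(x,\hat p)-\check f(x,\hat k)$ by $\lambda(x)a(r)/r^{2+\alpha(x)}$ over the cap $S^\eps_{<}=\{r<\sqrt{2\eps}\}$. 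I would set $\hat p=\hat k+w$ and Taylor expand $\check f(x,\hat k+w)$ to fourth order in $w$ with integral remainder, so that the whole analysis reduces to computing and bounding the resulting moments of $w$ against the radial weight.

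Two structural facts drive the computation. First, $\check f(x,v)=f(x,v/|v|)$ is $0$-homogeneous in $v$, so its radial derivatives vanish at $v=\hat k$: $\hat k\cdot\nabla_v\check f=0$ and $\hat k^{T}D^2_v\check f\,\hat k=0$. Second, I decompose $w=w_T+w_R$, where $w_T$ is tangential to $\hat k$ (of size $O(r)$) and $w_R=(\hat p\cdot\hat k-1)\hat k=-\frac{r^2}{2}\hat k$ is radial (of size $O(r^2)$). Because the cap and the weight are azimuthally symmetric about $\hat k$, all odd tangential moments of $w_T$ vanish. Consequently the first-order term drops (its only surviving, radial, part pairs with $\hat k\cdot\nabla_v\check f=0$) and the third-order term drops as well (it is odd in $w_T$, the cross and radial contributions being of higher order in $\eps$).

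The second-order term is where $\sigma^2_{\eps}\Delta_{\mathbb{S}^2}$ emerges. Keeping the leading tangential piece $\frac12 w_T^{T}D^2_v\check f\,w_T$ and using the isotropy of $w_T$ in the tangent plane, the $w_T$-integral becomes a multiple of the tangential trace $\mathrm{tr}(D^2_v\check f)-\hat k^{T}D^2_v\check f\,\hat k=\Delta_{\mathbb{R}^3}\check f-0=\Delta_{\mathbb{S}^2}f$, the last identity holding for $0$-homogeneous $\check f$ restricted to $|v|=1$. Evaluating the remaining scalar $\frac14\int_{S^\eps_{<}}\lambda(x)a(0)\,r^{-\alpha(x)}\sigma(d\hat p)$ in polar coordinates reproduces $\sigma^2_{\eps}(x)$, up to replacing the naive radius $\sin(\theta_\eps/2)$ by the stereographic radius $r'_\eps=\tan(\theta_\eps/2)$ with $\cos\theta_\eps=1-\eps$; this replacement is itself $O(\eps)$ relative to $\sigma^2_{\eps}$, hence of order $\eps^{2-\alpha(x)/2}$. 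The remaining error then splits into three pieces, each of relative size $O(\eps)$ with respect to the leading $\sigma^2_{\eps}\sim\eps^{1-\alpha(x)/2}$: (i) replacing $a(r)$ by $a(0)$, where $a'(0)=0$ gives $a(r)-a(0)=\frac12 a''(\xi)r^2$ and $\int_{S^\eps_<}r^{-\alpha}r^2\,\sigma(d\hat p)\sim\eps^{2-\alpha/2}$, producing the $\sup|a''|$ term against $\|\Delta_{\mathbb{S}^2}f\|$; (ii) the curvature and sagitta corrections from $w_R$ and from $|w_T|^2=r^2+O(r^4)$, again $O(\eps)$ relative, producing the $6\pi(1-\eps_0)^{-3}\sup a$ term against $\|\Delta_{\mathbb{S}^2}f\|$; and (iii) the fourth-order remainder, bounded by $\frac{1}{4!}\sup_{|h|\le r_{\eps_0}}|D^4_v\check f(x,\hat k+h)|\int_{S^\eps_<}r^{-\alpha}r^2\,\sigma(d\hat p)\sim\eps^{2-\alpha/2}$, producing the $D^4_k\check f$ term. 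Finally I would take $L^2_{x,\hat k}$ norms of this pointwise estimate, pull the norm inside the $w$-integrals by Minkowski's inequality, and bound $\eps^{2-\alpha(x)/2}\le\eps^{2-\alpha_M/2}$ uniformly to obtain $\|(\calL^\eps_{<}-\sigma^2_{\eps}\Delta_{\mathbb{S}^2})f\|_{L^2}\le\eps^{2-\alpha_M/2}E(f)$.

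The main obstacle is the interface between the extrinsic Euclidean Taylor expansion of $\check f$ and the intrinsic operator $\Delta_{\mathbb{S}^2}$: one must carefully justify the tangential/radial splitting, verify that every odd-order and curvature-type contribution is genuinely of order $\eps^{2-\alpha/2}$ rather than $\eps^{1-\alpha/2}$, and control the blow-up of the derivatives of $v\mapsto v/|v|$ entering the remainder, which is the origin of the $(1-\eps_0)^{-6}$ and $(1-\eps_0)^{-3}$ factors in $E(f)$. Pinning the leading constant down to $\sigma^2_{\eps}$ exactly, and isolating the $O(\eps)$ mismatch stemming from the choice $r'_\eps=\tan(\theta_\eps/2)$, is the computational heart of the argument.
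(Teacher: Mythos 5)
Your route is genuinely different from the paper's, and it is worth comparing the two. The paper never expands in the chord $w=\hat p-\hat k$: it first pulls the integral back to the tangent plane through the retraction $R_{\hat k}(h)=(\hat k+h)/|\hat k+h|$, so that $\calL^\eps_{<}f$ becomes an integral over a two-dimensional ball $B_{\eps,\hat k}$ of radius $r_\eps$ centered at $0$, with kernel and Jacobian depending only on $|h|$. The Taylor expansion of $\check f(x,\hat k+h)$ is then taken in the purely tangential variable $h$, and the exact symmetry $h\to-h$ of the ball kills the first- and third-order terms identically; only the Hessian term (which gives $\Delta_{\mathbb{S}^2}f$ through the same tangential-trace identity you invoke) and the fourth-order remainder survive. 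What the pullback buys is precisely that no tangential/radial splitting and no cross terms ever appear: curvature is entirely encoded in the relations $R_{\hat k}(h)\cdot\hat k=1/\sqrt{1+|h|^2}$ and in the Jacobian, and the comparison of the resulting coefficient with $\sigma^2_\eps$ is reduced to one-dimensional integrals via $v=\tan(\theta/2)$, which is where $r'_\eps$ and the explicit constants $(1-\eps_0)^{-6}$, $(1-\eps_0)^{-3}$ and $2^4\pi$ of \fref{eq:def_C_eps} come from. Your chord expansion buys a more geometric picture, but at the price of bookkeeping that the paper's parametrization avoids by construction, and the constants you would obtain are tied to your parametrization, so you would still need to check they are dominated by those in \fref{eq:def_C_eps}.

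There is one concrete gap in your sketch: the third-order term does not ``drop.'' With $w=w_T+w_R$, $w_R=-\tfrac{r^2}{2}\hat k$, the cubic term contains the cross contribution $3D^3_k\check f(x,\hat k)(w_T,w_T,w_R)$, which is \emph{even} in $w_T$ (azimuthal symmetry does not kill it) and of size $|w_T|^2|w_R|\sim r^4$. Integrated against the weight $\lambda(x)a(r)r^{-2-\alpha(x)}$ and the measure $\sigma(d\hat p)=r\,dr\,d\varphi$, it produces $\int_0^{\sqrt{2\eps}}r^{3-\alpha}dr\sim\eps^{2-\alpha/2}$: this is exactly the error level you are trying to establish, not a higher-order quantity, so it must be kept and bounded. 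Bounded naively it yields a term proportional to $\sup|D^3_k\check f|$, which does not appear in $E(f)$, so the lemma as stated would not follow. The missing ingredient is a second differentiation of Euler's identity $\nabla_v\check f(x,v)\cdot v=0$, which gives $D^3_k\check f(x,v)(u_1,u_2,v)=-2D^2_k\check f(x,v)(u_1,u_2)$; with it, the cross term becomes $3r^2D^2_k\check f(x,\hat k)(w_T,w_T)$, i.e., after azimuthal averaging, a curvature correction proportional to $r^4\,\Delta_{\mathbb{S}^2}f$ that can join your item (ii). With that identity added (and the constants recomputed), your argument closes; without it, the step asserting that the third-order term vanishes fails.
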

\begin{proof}
Before starting the proof, we introduce the \emph{retraction} $R_{\hat k}$ at $\hat k$ onto the sphere $R_{\hat k}(h):=\frac{\hat k + h}{|\hat k + h|}$, 
and 
\[B_{\eps,\hat k} := R^{-1}_{\hat k}(S^\eps_{<}) = \big\{ h = \beta_1\,\hat k_1^\perp +\beta_2\,\hat k_2^\perp : \quad \beta = (\beta_1,\beta_1)\in \mathbb{R}^2\quad \text{with}\quad |\beta| < r_\eps\big\},\]
where $(\hat k_1^\perp, \hat k_2^\perp)$ stands for an orthonormal basis of $\hat k^\perp$. We also recall that $r_\eps=\sqrt{1-(1-\eps)^2}/(1-\eps)$,
coming from the relation $\tan(\arccos(s)) = \sqrt{1-s^2}/s$ and \eqref{def:Seps}. In different terms, $B_{\eps,\hat k}$ is a ball centered at $0$ with radius $r_\eps$ on the tangent plane to the unit sphere at $\hat k$, and the retraction $R_{\hat k}$ holds from $B_{\eps,\hat k}$ onto $S^\eps_{<}$. 
 
To prove the lemma, we start with the following change of variables $\hat p = R_{\hat k}(h)$ in $\calL^\eps_{<}$, so that
\[
\calL^\eps_{<} f(x, \hat k) = \frac{\lambda(x)}{2^{1+\alpha(x)/2}}\int_{B_{\eps,\hat k}} \rho(x, R_{\hat k}(h) \cdot \hat k)\big(f(x, R_{\hat k}(h)) - f(x, R_{\hat k}(0))\big) |\det \text{Jac} R_{\hat k}(h)| dh. 
\] 
Using that $ \check f(x, \hat k + h) = f(x, R_{\hat k}(h))$ and $ \check f(x, \hat k) = f(x, \hat k)$, one can decompose $\calL^\eps_{<} f$ as
\[
\calL^\eps_{<} f(x,\hat k) = D_1 + D_2 + D_3 + D_4,
\] 
where the terms $D_j$ follow with obvious notations from the Taylor expansion
\[\begin{split}
\check f(x,\hat k + h) - \check f (x,\hat k) & = D_k\check f(x,\hat k)(h) +\frac{1}{2!} D^2_k\check f(x,\hat k)(h,h) + \frac{1}{3!} D^3_k\check f(x,\hat k)(h,h,h) \\
&+ \frac{1}{3!} \int_0^1 (1-s)^3 D^4_k\check f(x,\hat k + s h)(h,h,h,h) ds.
\end{split}\] 

\paragraph{The terms  $D_1$ and $D_3$.}
Using that the ball $B_{\eps,\hat k}$ in the tangent plane is symmetric with respect to $0$, we just make the change of variables $h\to -h$, so that $D_1 = -D_1$ and $D_3 = -D_3$ leading to $D_1 = D_3 = 0$.

\paragraph{The term $D_4$.} 
We have
\[
|D_4|  \leq \frac{\lambda(x)}{3!\,2^{1+\alpha(x)/2}} \int_0^1 ds\, (1-s)^3 \int_{B_{\eps,\hat k}} dh\,\rho(x, R_{\hat k}(h) \cdot \hat k) \|D^4_k \check f(x,\hat k + s h)\| \, |h|^4 |\det \text{Jac} R_{\hat k}(h)|\,dh.\]
Since $
R_{\hat k}(h) = \frac{\hat k + h}{\sqrt{1 + |h|^2}},
$
we have
\[
R_{\hat k}(h) \cdot \hat k = \frac{1}{\sqrt{1 + |h|^2}} \qquad \text{and} \qquad |\det \text{Jac} R_{\hat k}(h)| = \frac{1}{\sqrt{1+|h|^2}}.
\]
As a consequence, we find 
\[\begin{split}
\|D_4\|_{L^2(\mathbb{R}^3\times \mathbb{S}^2)} & \leq \frac{1}{4!} \sup_{v} a(v) \sup_{|h|\leq r_\eps}\|D^4 \check f (\cdot, \cdot + h)\|_{L^2(\mathbb{R}^3\times \mathbb{S}^2)} \\
&\times \sup_{x} \frac{\lambda(x)}{2^{1+\alpha(x)/2}}  \int_{\{|h|\leq r_\eps\}}  \frac{|h|^4}{ (1- 1/\sqrt{1 + |h|^2})^{1+\alpha(x)/2}} \,dh.
\end{split}\]
Changing to polar coordinates in the last integral gives
\[\begin{split}
\int_{\{|h|\leq r_\eps\}}  \frac{|h|^4}{ (1- 1/\sqrt{1 + |h|^2})^{1+\alpha(x)/2}} \,dh & = 2\pi \int_0^{r_\eps}  \frac{r^5}{ (1- 1/\sqrt{1 + r^2})^{1+\alpha(x)/2}} dr  \\
& = 2\pi\int_0^{\eps/(1-\eps)} v^{1-\alpha(x)/2}(2+v)^2(v+1)^{2+\alpha(x)/2} dv
\end{split}\]
where we used the change of variables $ v=\sqrt{1+r^2}-1$ and that $\sqrt{1+r^2_\eps}-1 = \eps/(1-\eps)$. This gives finally
\[
\|D_4\|_{L^2(\mathbb{R}^3\times \mathbb{S}^2)}  \leq \frac{\pi }{3} \sup_{v} a(v)\sup_{h \in B_\eps}\|D^4 \hat f (\cdot, \cdot + h)\|_{L^2(\mathbb{R}^3\times \mathbb{S}^2)}\sup_{x} \lambda(x)  \frac{\eps^{2-\alpha_M/2}}{(1-\eps)^6}.
\]

\paragraph{The term $D_2$.}

For this last term, we have
  \[
D_2 = \frac{\lambda(x)}{2^{2+\alpha(x)/2}}\int_{B_{\eps,\hat k}} \rho(x, R_{\hat k}(u) \cdot \hat k) D^2\check f(x,\hat k)(h, h) |\det \text{Jac} R_{\hat k}(h)|dh,
\]
with
\[
D^2\check  f(x,\hat k)(h, h) = h_1^2 \, \partial^2_{k_1k_1}\check  f(x,\hat k) + h_2^2 \, \partial^2_{k_2k_2}\check  f(x,\hat k) + 2h_1 h_2 \, \partial^2_{k_1k_2}\check  f(x,\hat k),
\]
and, accordingly, the following decomposition $
D_2 = D_{21} + D_{22} + 2 D_{23}.$ 
Applying the change of variables $h=(h_1,h_2)\to (-h_1,h_2)$ leads to $D_{23}=0$. Setting $h=(h_1,h_2)\to (h_2,h_1)$ leads to 
\[D_2 = \frac{\lambda(x)}{2^{3+\alpha(x)/2}}\int_{B_{\eps,\hat k}}  \rho(x, R_{\hat k}(h) \cdot \hat k) |h|^2  |\det \text{Jac} R_{\hat k}(h)| dh \, Trace(Hess\check f(x,\hat k)),\]
where
\[ Trace(Hess\check f(x,\hat k)) = \Delta_p  f\Big(x,\frac{p}{|p|}\Big)_{| p=\hat k} = \Delta_{\mathbb{S}^2} f(x, \hat k).\]
Furthermore, with the change of variables $\hat p = R_{\hat k}(h)$, we find
\[
\int_{B_{\eps,\hat k}}  \rho(x, R_{\hat k}(h) \cdot \hat k) |h|^2  |\det \text{Jac} R_{\hat k}(h)| dh = \int_{S^\eps_<}  \rho(x, \hat p \cdot \hat k) |R^{-1}_{\hat k}(\hat p)|^2  \sigma(d\hat p),
\]
and note that for $\hat p \cdot \hat k = \cos(\theta)$, we have  $
|R^{-1}_{\hat k}(\hat p)|^2 = \tan^2(\theta).
$
As a result, moving to spherical coordinates, and performing the change of variables $v=\tan(\theta/2)$ together with the relation
\[\arccos(s) = 2\arctan\Big(\frac{\sqrt{1-s^2}}{1+s}\Big)\qquad \text {for}\quad s\in(-1,1],\] 
we find, with $r'_\eps=\sqrt{1-(1-\eps)^2}/(2-\eps)$,
\[\begin{split}
\int_{S^\eps_<}  \rho(x, \hat p \cdot \hat k) |R^{-1}_{\hat k}(\hat p)|^2  \sigma(d\hat p) & = 2\pi \int_0^{\arccos(1-\eps)} \rho(x, \cos(\theta))\tan^2(\theta)\sin(\theta)  d\theta \\
 & =  2^{3-\alpha(x)/2} \pi \int_0^{r'_\eps} \frac{a(2 v/\sqrt{1+v^2})(1+v^2)^{\alpha(x)/2}}{(1-v^2)^2v^{\alpha(x)-1}}dv,
\end{split}\]
leading to
  \[
D_2 =\tilde D_2\Delta_{\mathbb{S}^2} f(\hat k) := 2^{1-\alpha(x)} \pi \lambda(x) \int_0^{r'_\eps} \frac{a(2 v/\sqrt{1+v^2})(1+v^2)^{\alpha(x)/2}}{(1-v^2)^2 v^{\alpha(x)-1}} dv \, \Delta_{\mathbb{S}^2} f(x,\hat k).
\]
Now, let us introduce
\[
\tilde \sigma^2_{\eps}(x) :=2^{1-\alpha(x)} \pi \lambda(x)  \int_0^{r'_\eps} a(2 v/\sqrt{1+v^2})\frac{dv}{v^{\alpha(x)-1}},
\]
and remark that
\[\begin{split}
|\tilde D_2 - \tilde \sigma^2_{\eps}(x) | & \leq 3\cdot 2^{3-\alpha(x)} \pi \lambda(x)\sup_{v} a(v) \frac{1}{(1-{r'_\eps}^2)^3} \int_0^{r'_\eps} v^{3 - \alpha(x)} dv \leq 3\cdot 2^{2-\alpha(x)} \pi \lambda(x) \sup_{v} a(v) \frac{{r'_\eps}^2}{(1-{r'_\eps}^2)^3} \\
&\leq 6 \pi \sup_{x}\lambda(x) \sup_{v} a(v) \frac{\eps^{2-\alpha_M/2}}{(1-\eps)^3},
\end{split}\]
since $r'_\eps \leq \sqrt{2\eps}$, $0 \leq \alpha_m\leq \alpha(x)\leq \alpha_M<2$ and $1 - {r'_\eps}^2 = \frac{2(1-\eps)}{2-\eps}>2(1-\eps).$
With the definition of $\sigma_\eps$ given in \fref{def_sig}, 
we obtain using $a'(0)=0$,
\[\begin{split}
| \tilde \sigma^2_{\eps}(x) - \sigma^2_{\eps}(x) | & \leq 2^5 \pi \lambda(x) \sup_{\tilde v\in[0,2\sqrt{2\eps} ]} |a''(\tilde v)| \int_0^{r'_\eps} v^{3 - \alpha(x)} dv,\\
& \leq 2^{4} \pi \sup_{x} \lambda(x) \sup_{v\in[0,2\sqrt{2\eps}]} |a''(v)| \, \eps^{2-\alpha_M/2}.
\end{split} \]
Collecting the various estimates on the $D_j$ and using that $\eps\leq \eps_0<1$ concludes the proof of Lemma \ref{lemma:approx_sig} and therefore of Proposition \ref{prop:approx_RTE}.
$\square$
\end{proof}

\subsection{Proof of Proposition \ref{rep_prob2}}\label{proof_rep_prob2}

We first show that the infinitesimal generator of the Markov process $D_\eps$ is $\calA_\eps$.

\paragraph{Infinitesimal generator for $D_\eps$.}  Let $f$ be a smooth bounded function on $\mathbb{R}^3\times \mathbb{S}^2$. The goal of this section is to prove that
\be \label{GA}
\lim_{h\to 0^+}\frac{1}{h}\big(\mathbb{E}_{z}[f(D_\eps(h))] - f(z)\big)= \mathcal{A}_\eps f(z).
\ee
To this end, we introduce the first jump time $T_1$ to obtain
\be \label{decT1}
\mathbb{E}_{z}[f(D_\eps(t))]  = \mathbb{E}_{z}[f(D_\eps(t))\mathbf{1}_{(T_1 > t)}]  + \mathbb{E}_{z}[f(D_\eps(t))\mathbf{1}_{(T_1 \leq t)}].
\ee
Using conditional expectations, we  find for the first term
\[\begin{split}
\mathbb{E}_{z}[f(D_\eps(t))\mathbf{1}_{(T_1 > t)}] & = \mathbb{E}_{z}[f(\psi^{z}_0(t))\mathbf{1}_{(T_1 > t)}]=\mathbb{E}_{z}\big[\mathbb{E}_{z}[f(\psi^{z}_0(t))\mathbf{1}_{(T_1 > t)}|\, \psi^{z}_{0}(s),\,s\in[0,t]]\big]\\
& = \mathbb{E}_{z}\Big[f(\psi^{z}_0(t)) \, \mathbb{P}_{z}\big(T_1 > t \,|\, \psi^{z}_{0}(s),\,s\in[0,t] \big)\Big]= \mathbb{E}_{z}\Big[f(\psi^{z}_{0}(t))e^{-\int_0^t \Lambda_\eps(\psi^{z}_0(s)) ds}\Big].
\end{split}\] 
With the following notations for the flow $\psi^{z}_n$ ,
\[\
\psi^{z}_n  = (X^{x}_{n}, K^{\hat k}_{n}) = \big((X^{x}_{j,n})_{j=1,2,3}, (K^{\hat k}_{j,n})_{j=1,2,3}\big)\in \R^3 \times\mathbb{S}^2,
\]
together with $
\check f(x,k) = f(x,k/|k|)$ for $(x,k)\in \mathbb{R}^3\times \mathbb{R}^3$, the It\^o formula yields
\[\begin{split}
    d f(\psi^{z}_n(t))  = d \check f(\psi^{z}_n(t)) & = \nabla_x \check f (\psi^{z}_n(t)) \cdot dX^{x}_{n}(t)  + \nabla_k \check f (\psi^{z}_n(t)) \cdot dK^{\hat k}_{n}(t)\\
    &\quad + \frac{1}{2} \sum_{j,l=1,2,3} \partial^2_{k_j k_l} \check  f(\psi^{z}_n(t)) d< K^{\hat k}_{j,n}(t), K^{\hat k}_{l,n}(t) > \\
& =  K^{\hat k}_{n}(t)\cdot \nabla_x \check f (\psi^{z}_n(t)) \,dt + \sigma_{\eps}^2(X^{x}_{n}(t)) \Delta_{\mathbb{S}^2} f (\psi^{z}_n(t))\, dt\\
& \quad +  \sqrt{2}\,\sigma_{\eps}(X^{x}_n (t)) \nabla_k \check f (\psi^{z}_{n}(t))\cdot ( K^{\hat k}_n(t) \times dW_n(t)).
\end{split}\]
Above, we have used the fact that
\be
\label{expDS}
\Delta_{\mathbb{S}^2} f(x, \hat k) = \Delta_k \check f(x,\hat k) - \sum_{j,l=1,2,3} \hat k_j \hat k_l \partial^2_{k_jk_l} \check f (x,\hat k) - 2 \sum_{j=1,2,3}  \hat k_j\partial_{k_j} \check f (x,\hat k).
\ee
Therefore, we have for $n=0$,
\[
d\Big((f(\psi^{z}_0(t))-f(z))e^{-\int_0^t \Lambda_\eps(\psi^{z}_0(s)) ds} \Big)  = \Big( d f(\psi^{z}_0(t)) -\Lambda_\eps(\psi^{z}_0(t)) (f(\psi^{z}_0(t))-f(z)) \Big)e^{-\int_0^t \Lambda_\eps(\psi^{z}_0(s)) ds}
\]
so that
\[
\lim_{h\to 0^+}\frac{1}{h}\mathbb{E}_{z}[(f(D_\eps(h)) - f(z))\mathbf{1}_{(T_1 > h)}] = \hat k \cdot \nabla_x f(z) + \sigma_{\eps}^2(x) \Delta_{\mathbb{S}^2} f(z), \qquad \forall z=(x,\hat k)\in \mathbb{R}^3 \times \mathbb{S}^2.
\]

Regarding the second term in \fref{decT1}, we find, using the Markov property in the third line,
\[\begin{split}
\mathbb{E}_{z}\big[f(D_\eps(h))&\mathbf{1}_{(T_1 \leq h)}\big]  = \mathbb{E}_{z}\big[\,\mathbb{E}_{z}[f(D_\eps(h))\,|\, T_1]\, \mathbf{1}_{(T_1 \leq h)}\big] \\
& = \mathbb{E}_{z}\Big[\int_0^h \mathbb{E}_{z}[f(D_\eps(h))\, |\, T_1 =v]\, \Lambda_\eps(\psi^{z}_0(v)) \,e^{-\int_0^v \Lambda_\eps(\psi^{z}_0(s)) ds}  dv\Big] \\
& = \mathbb{E}_{z}\Big[\int_0^h \int_{\mathbb{R}^3\times\mathbb{S}^2}\mathbb{E}_{z'}[f(D_\eps(h-v))] \, \Pi_\eps(\psi^{z}_0(v), dz') \, \Lambda_\eps(\psi^{z}_0(v)) \, e^{-\int_0^v \Lambda_\eps(\psi^{z}_0(s)) ds}  dv \Big]\\
& = \mathbb{E}_{z}\Big[\int_0^h \int_{\mathbb{S}^2}\mathbb{E}_{(X_0^x(h-w),\hat p)}[f(D_\eps(w))] \, \pi_\eps(\psi^{z}_0(h-w),\hat p) \, \sigma(d\hat p)\, \Lambda_\eps(\psi^{z}_0(h-w)) \, e^{-\int_0^{h-w} \Lambda_\eps(\psi^{z}_0(s)) ds}  dw \Big],
\end{split}\]
where the probability $\Pi_\eps$ and the density $\pi_\eps$ are defined respectively in \eqref{def_Pi} and \eqref{def_pi}. As a consequence,
\[
\lim_{h\to 0^+}\frac{1}{h}\mathbb{E}_{z}[f(D_\eps(h)) \mathbf{1}_{(T_1 \leq h)}] = \Lambda_\eps(z) \int_{\mathbb{S}^2} f(x,\hat p) \pi_\eps(z,\hat p) \sigma(d\hat p).
\]
Moreover, we have
\[
\mathbb{P}_{z}(T_1\leq h) = \mathbb{E}_{z}\big[ \mathbb{P}_{z}\big(T_1\leq h\,|\, \psi^{z}_0(s), \,s\in[0,h]\big)\big] = \mathbb{E}_{z}\Big[ 1 - e^{-\int_0^h \Lambda_\eps(\psi^{z}_0(s))ds}\Big],
\]
and it is then direct to see that
\[
\lim_{h\to 0}\frac{1}{h}\mathbb{P}_{z}(T_1\leq h) =  \Lambda_\eps(z).
\]
This finally yields
\[
\lim_{h\to 0^+}\frac{1}{h}\mathbb{E}_{z}[(f(D_\eps(h)) - f(z))\mathbf{1}_{(T_1 \leq h)}] = \Lambda_\eps(z) \int_{\mathbb{S}^2} ( f(x,\hat p) -  f(x,\hat k)) \pi_\eps(z,\hat p) \sigma(d\hat p),
\]
which gives \fref{GA} collecting all results.

\paragraph{Proof of \fref{eq:prob_rep2}.} Since $D_\eps$ is a solution to the martingale problem associated to $\calA_\eps$ (see \cite[Proposition 1.7 pp. 162]{Ethier}), we have, for any smooth bounded function $f$ on $\mathbb{R}^3\times\mathbb{S}^2$,
\[
\mu_t(f) = \mu_0(f) + \int_0^t \mu_s(\calA_\eps f) \,ds, \qquad \textrm{where} \qquad
\mu_t(f) := \mathbb{E}_{\mu_0}[f(D_\eps(t))].
\]
Let
\[
\nu_t(f) := \frac{1}{\bar u_0} \int_{\mathbb{R}^3\times \mathbb{S}^2} u_\eps(t,x,\hat k) f(x,\hat k) \,dx \, \sigma(d\hat k).
\]
Since $u_\eps$ solves \fref{RTE_approx}, we have 
\[u_\eps(t) = u_0 + \int_0^t \calA^\ast_\eps u_\eps(s)ds,\]
where $\calA^\ast_\eps$ stands for the adjoint operator of $\calA_\eps$ in $L^2(\mathbb{R}^3 \times \mathbb{S}^2)$. Then,
\[\begin{split}
    \nu_t(f) &=  \mu_0(f) + \frac{1}{\bar u_0} \int_{\mathbb{R}^3\times \mathbb{S}^2} \int_0^t \calA^\ast_\eps u_\eps(s, x,\hat k) \, ds \, f(x,\hat k)\, dx \, \sigma(d\hat k)\\
    &= \mu_0(f) + \int_0^t \frac{1}{\bar u_0} \int_{\mathbb{R}^3\times \mathbb{S}^2} u_\eps(s, x,\hat k)  \calA_\eps  f(x,\hat k) \, dx \, \sigma(d\hat k) \, ds= \mu_0(f) + \int_0^t \nu_s(\calA_\eps f) ds.
\end{split}\]
Therefore, according to \cite[Proposition 9.18 pp. 251]{Ethier}, we have $ \mu_t = \nu_t $ for any $t\geq 0$, which concludes the proof.

\subsection{Proof of Theorem \ref{mainth}}\label{proof_cv}

The proof of this result is provided in three steps. The first step consists in rewriting the probabilistic representation \eqref{eq:prob_rep2} for \eqref{RTE_approx} is terms of a SDE with jumps. The second step concerns the error analysis of the solution to this later SDE with its discretized version. Finally, the last step gathers all the error estimated and concludes the proof.

\subsubsection{Step 1}

We first introduce an equivalent formulation (in the statistical sense) for the process $D_\eps$ in terms of a stochastic differential equation (SDE) with jumps. This representation is useful when comparing  with the discrete scheme. Let $\tilde D_\eps = (\tilde X_\eps, \tilde  K_\eps)$ be the solution to the following SDE with jumps:
\begin{equation}\label{SDE_jump}\begin{split}
d \tilde X_{\eps}(t) & = \tilde K_{\eps}(t) \, dt\\
d \tilde K_{\eps}(t) & = \sqrt{2}\, \sigma_{\eps}(\tilde X_{\eps}(t^-))\,\tilde K_{\eps}(t^-)\times dW - 2\, \sigma^2_{\eps}(\tilde X_{\eps}(t^-))\tilde K_{\eps}(t^-)\,dt \\
&\quad + \int_{(0,\pi)\times (0,2\pi)\times(0,1)} \tilde R(\theta, \varphi, \tilde D_\eps(t^-), v) P(dt,d\theta,d\varphi,dv)
\end{split}\end{equation}
where the function $\tilde R$ is defined by, for $z=(x,\hat k)$,
  \[
\tilde R(\theta, \varphi,z, v) = \left\{\begin{array}{ccl} R(\theta, \varphi,\hat k) - \hat k &\text{if} & v \leq \Lambda_\eps(z) \pi_\eps(z,R(\theta, \varphi,\hat k))/\bar \Lambda_\eps \\ 
0 & \text{otherwise.}&
\end{array}\right.
\]
Above, $R$ is defined in \eqref{eq:def_R}, $\Lambda_\eps$ in \eqref{eq:Lambda}, $\bar \Lambda_\eps$ in \eqref{eq:def_bar_Lambda}, $\pi_\eps $ in \fref{def_pi}, and $P$ is a random Poisson measure with intensity measure
\begin{equation}\label{eq:def_int_poisson}
\mu(dt,d\theta,d\varphi,du) = \bar \Lambda_\eps \mathbf{1}_{(0,\infty)\times(0,\pi)\times (0,2\pi)\times (0,1)}(t,\theta,\varphi,v)\sin(\theta)dt\,d\varphi\,d\theta /(4\pi).
\end{equation}
See e.g. \cite[Chapter 2]{Applebaum} for more details on Poisson random measures. The notation $t^-$ is standard and refers to the left limit when approaching $t$ before a jump. With this construction, the infinitesimal generator for the Markov process $\tilde D_\eps$ is 
\[
\tilde \calA_\eps f(z) = \hat k \cdot \nabla_x f(z) + \sigma^2_{\eps}(x) \Delta_{\mathbb{S}^2} f(z) + \bar \Lambda_\eps \int_{(0,\pi)\times(0,2\pi)\times (0,1)} (f(x, \hat k + \tilde R(\theta, \varphi,z, v)) - f(z) ) \sigma(d\hat p)dv,
\] 
and we have the following result.
\begin{lemma} \label{lemdis}
The processes $\tilde D_{\eps}$ and $D_{\eps}$ have the same distribution.
\end{lemma}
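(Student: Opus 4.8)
The plan is to identify both $\tilde D_\eps$ and $D_\eps$ as solutions of the martingale problem associated with the generator $\calA_\eps$ of Proposition \ref{rep_prob2}, started from the same initial law, and then to invoke well-posedness of that martingale problem to conclude that the two processes have the same distribution.

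The key step is therefore to check that the generator $\tilde\calA_\eps$ of $\tilde D_\eps$ coincides with $\calA_\eps$ on smooth bounded functions. The transport and diffusion parts agree by inspection, since the drift and Brownian parts of \eqref{SDE_jump} are identical to those of \eqref{diff_eq}; only the jump parts require work. I would first integrate out the auxiliary variable $v \in (0,1)$ in the jump term of $\tilde\calA_\eps$. By the definition of $\tilde R$, the increment $f(x,\hat k + \tilde R(\theta,\varphi,z,v)) - f(z)$ vanishes on the rejection set and equals $f(x,R(\theta,\varphi,\hat k)) - f(x,\hat k)$ on the acceptance set $\{ v \le \Lambda_\eps(z)\pi_\eps(z,R(\theta,\varphi,\hat k))/\bar\Lambda_\eps\}$; integrating the indicator in $v$ thus produces the weight $\Lambda_\eps(z)\pi_\eps(z,R(\theta,\varphi,\hat k))/\bar\Lambda_\eps$, whose denominator cancels the prefactor $\bar\Lambda_\eps$. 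I would then perform the change of variables $\hat p = R(\theta,\varphi,\hat k)$ of \eqref{eq:def_R}: since $R(\cdot,\cdot,\hat k)$ is a rotation carrying the (normalized) surface measure into itself, the $(\theta,\varphi)$-integral turns into an integral over $\mathbb{S}^2$ against $\sigma(d\hat p)$, leaving exactly $\Lambda_\eps(z)\int_{\mathbb{S}^2}\pi_\eps(z,\hat p)(f(x,\hat p)-f(x,\hat k))\sigma(d\hat p)$, i.e. the jump part of $\calA_\eps$.

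Once $\tilde\calA_\eps = \calA_\eps$ is established, the conclusion follows from uniqueness. The process $D_\eps$ solves the $\calA_\eps$-martingale problem by Proposition \ref{rep_prob2}, and $\tilde D_\eps$, being the strong solution of \eqref{SDE_jump} with generator $\tilde\calA_\eps = \calA_\eps$, solves the same martingale problem with the same initial law $\mu_0$. Under the standing smoothness assumptions on $\lambda$ and $\alpha$, this martingale problem is well posed — the same framework from \cite{Ethier} already used in the proof of Proposition \ref{rep_prob2} — so the two laws coincide on $\mathbb{R}^3\times\mathbb{S}^2$, which is the claim.

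I expect the main obstacle to be the jump-part reduction rather than the (now routine) uniqueness step: one must verify that the threshold $\Lambda_\eps(z)\pi_\eps(z,\hat p)/\bar\Lambda_\eps$ returns the correct acceptance weight when integrated against $v$, so that $\bar\Lambda_\eps$ is chosen large enough for the thinning to be consistent (see \eqref{eq:def_bar_Lambda}), and one must keep careful track of the normalization of $\sigma$ under the rotation $R$ so that the constants match exactly. An alternative, more pathwise route would be to compare the two constructions directly — identical between-jump diffusions \eqref{diff_eq}, inter-jump times with inhomogeneous rate $\Lambda_\eps$ obtained from the Poisson clock by thinning, and jump law $\Pi_\eps$ — but the martingale-problem argument is cleaner and matches the style of the preceding proof.
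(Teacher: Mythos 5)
Your proposal is correct and follows essentially the same route as the paper: the paper likewise reduces the claim to the identity $\calA_\eps g = \tilde\calA_\eps g$ for smooth bounded $g$, verifies it for the jump parts by integrating out the thinning variable $v$ (which produces the weight $\Lambda_\eps(z)\pi_\eps(z,R(\theta,\varphi,\hat k))/\bar\Lambda_\eps$ and cancels $\bar\Lambda_\eps$) and then changing variables $\hat p = R(\theta,\varphi,\hat k)$ to recover the integral against $\sigma(d\hat p)$, exactly as you describe. The paper's concluding step --- ``both are Markov processes and are therefore characterized by their generators'' --- is just the terse form of your martingale-problem/well-posedness argument, so the two proofs coincide in substance.
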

\begin{proof}
$\tilde D_{\eps}$ and $D_{\eps}$ are both Markov processes and are therefore characterized by their generators. We just have then to prove that $\calA_\eps g= \tilde \calA_\eps g$ for any bounded smooth function $g$. This is a direct consequence of the definition to $\tilde R$. Indeed, denoting by $\tilde \calI_\eps$ and $\calI_\eps$ the integral operators in respectively $\tilde\calA_\eps$ and $\calA_\eps$, we have with $z=(x,\hat k)$,
\[\begin{split}
\tilde \calI_\eps g(z) &= \bar \Lambda_\eps \int_{(0,\pi)\times(0,2\pi)\times (0,1)}  (g(x, \hat k + \tilde R(\theta, \varphi,z, v)) - g(z) ) \, \sin(\theta)\, d\varphi \, d\theta \, dv/(4\pi)\\
 & = \bar \Lambda_\eps \int_{(0,\pi)\times(0,2\pi)} \int_{0}^{\Lambda_\eps(z)\pi_\eps(z,R(\theta, \varphi,\hat k))/\bar \Lambda_\eps} dv \, (g(x, R(\theta, \varphi,\hat k)) - g(z) )\,\sin(\theta)\,d\varphi\, d\theta /(4\pi) \\
& = \bar \Lambda_\eps \int_{\mathbb{S}^2} \frac{\Lambda_\eps(z)\pi_\eps(z,\hat{p})}{\bar \Lambda_\eps}  (g(x,\hat p) - g(z) )\,\sigma(d\hat p) =  \calI_\eps g(z),
\end{split}\]
which concludes the proof.
$\square$ 
\end{proof}

\subsubsection{Step 2} The goal is now to prove that the discretized process $D_{h,\eps}$ approximates $\tilde D_\eps$ in a statistical sense. We use for this the notations of Section \ref{secdifpart} for $X_{n,m}$, $K_{n,m}$ and $\hat K_{n,m}$. For simplicity, we suppose that the Gaussian vectors $(W_{n,m})$ in \fref{def:scheme} are obtained from a single 3D standard Brownian motion $W$ as follows: for $n \geq 0$ and $m=0,\dots,m_n$, we set $W_{n,m}=(W(T_{n,m}+h_{n,m})-W(T_{n,m}))/\sqrt{h_{n,m}}$. In the sequel, we will use the following process, defined by, for $t \in [T_{n,m}, T_{n,m}+h_{n,m}]$, $m=0,\dots,m_n$,
\begin{equation}\left\{\begin{split} \label{SDE_jump2}
\mathcal{X}_{n,m}(t) & =  X_{n,m} + \int_{T_{n,m}}^t \hat K_{n,m} ds \\
\mathcal{K}_{n,m}(t) & = \hat K_{n,m} -2 \int_{T_{n,m}}^t \sigma_{\eps}(X_{n,m}) \hat K_{n,m} ds + \sqrt{2} \int_{T_{n,m}}^t \sigma_{\eps}(X_{n,m}) \, \hat K_{n, m} \times dW(s).
\end{split}\right.\end{equation}
For $t \geq 0$, we then combine the $(\mathcal{X}_{n,m},\mathcal{K}_{n,m})$ into
\[
\mathfrak{D}_{h,\eps} (t)= (  \mathfrak{X}_{h,\eps}(t),  \mathfrak{K}_{h,\eps}(t)) = \sum_{n=0}^\infty\sum_{m=0}^{m_n}\mathbf{1}_{[T_{m,n},T_{m,n}+h_{n,m})}(t) \Psi_{n,m}(t),
\] 
where $\Psi_{n,m}(t) = (\mathcal{X}_{n,m}(t),\mathcal{K}_{n,m}(t))$ is the solution to \fref{SDE_jump2} with initial condition $\Psi_{n,m}(T_{n,m}) = (X_{n,m},\hat K_{n,m})$. 
Note that $\mathfrak{D}_{h,\eps}$ is simply an interpolation of $D_{h,\eps}$ in the intervals $[T_{n,m},T_{n,m}+h_{n,m}]$ that will allow us to use the It\^o formula.

For any smooth function $f$, we now introduce $b$ the (smooth) solution to the following backward RTE,
\begin{equation}\label{eq:back_RTE}
\partial_t b + \hat k \cdot \nabla_x b + \frac{\lambda(x)}{2^{1+\alpha(x)/2}}\int_{\mathbb{S}^2} \rho(x,\hat k \cdot\hat p)(b(\hat p)-b(\hat k)) \sigma(d\hat p) = 0,
\end{equation}
with terminal condition $b(T,x,\hat k) =  f(x,\hat k)$ and use the notation $\check b(t,x,k) = b(t,x,k/|k|)$, $(t,x,k)\in [0,T]\times \mathbb{R}^3 \times \mathbb{R}^3$. We have the following result.

\begin{prop} \label{convE}
For any $T>0$, any smooth bounded function $f$ on $\mathbb{R}^3 \times \mathbb{S}^2$, and any $(x,\hat k )\in\mathbb{R}^3 \times \mathbb{S}^2$, we have
\[
\big| \E_{(x,\hat k)}[f(D_{h,\eps} (T))] -  \E_{(x,\hat k)}[f(\tilde D_\eps (T))] \big| \leq \eps^{2-\alpha_M/2} \, 2T \, E_\infty(b)+h F_1(b),
\]
where $E_\infty(b)$ is defined as in \fref{eq:def_C_eps} with $L^2$ norms replaced by $L^\infty$ norms in all variables, and where $F_1(b)$ is an explicit function independent of $\eps$ and $h$ that depends on derivatives of $b$ up to order 4.
\end{prop}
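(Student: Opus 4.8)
The plan is to run a Talay--Tubaro-style weak-error analysis, using as a pivot the function $b$, the smooth backward solution of the \emph{original} RTE \fref{eq:back_RTE}, and to split the target quantity through $b(0,x,\hat k)$:
\[
\E_{(x,\hat k)}[f(D_{h,\eps}(T))] - \E_{(x,\hat k)}[f(\tilde D_\eps(T))] = \big(\E_{(x,\hat k)}[f(D_{h,\eps}(T))] - b(0,x,\hat k)\big) - \big(\E_{(x,\hat k)}[f(\tilde D_\eps(T))] - b(0,x,\hat k)\big).
\]
Pivoting on the \emph{original} $b$ (rather than on the backward solution of the $\eps$-equation) is what makes the constants $E_\infty(b)$ and $F_1(b)$ depend only on derivatives of $b$, hence independent of $\eps$ and $h$. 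Each of the two comparisons is handled by a Dynkin/Feynman--Kac identity, and the factor $2T$ in the $\eps$-term will come precisely from the two contributions not cancelling, since they are evaluated along different processes.

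First I would treat the continuous term. Since $\tilde D_\eps$ is Markov with generator $\calA_\eps$ and $b(T)=f$, Dynkin's formula gives
\[
\E_{(x,\hat k)}[f(\tilde D_\eps(T))] - b(0,x,\hat k) = \E_{(x,\hat k)}\Big[\int_0^T (\partial_t b + \calA_\eps b)(t,\tilde D_\eps(t))\,dt\Big].
\]
Because the transport and $\calL^\eps_{>}$ parts of $\calA_\eps$ coincide with those appearing in \fref{eq:back_RTE}, the integrand reduces to $-(\calL^\eps_{<} - \sigma^2_\eps\Delta_{\mathbb{S}^2})b$, which Lemma \ref{lemma:approx_sig} (in its $L^\infty$ form) bounds by $\eps^{2-\alpha_M/2}E_\infty(b)$; integrating over $[0,T]$ yields $\eps^{2-\alpha_M/2}\,T\,E_\infty(b)$, with no $h$-dependence, as expected since $\tilde D_\eps$ is not discretized.

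Next I would treat the discretized term using the interpolation $\mathfrak{D}_{h,\eps}$ of \fref{SDE_jump2}, which agrees with $D_{h,\eps}$ at the grid points (in particular at $t=T$) and is amenable to It\^o's formula. Telescoping $b(t,\mathfrak{D}_{h,\eps}(t))$ over the grid intervals, the jump increments are \emph{exact}: the thinning reconstructs $\calL^\eps_{>}$ in expectation, exactly as in Lemma \ref{lemdis}, so the only errors arise on the diffusion subintervals, where the coefficient $\sigma_\eps(X_{n,m})$ and the direction $\hat K_{n,m}$ are frozen. Writing the frozen-coefficient diffusion generator as $\mathfrak{L}_{n,m}$, the per-interval integrand $\partial_t b + \hat K_{n,m}\cdot\nabla_x b + \mathfrak{L}_{n,m}b + \calL^\eps_{>}b$ decomposes, via the backward equation, into the transport freezing error $(\hat K_{n,m}-\mathfrak{K}_{h,\eps}(t))\cdot\nabla_x b$, the diffusion freezing error $(\mathfrak{L}_{n,m}-\sigma^2_\eps\Delta_{\mathbb{S}^2})b$, and the term $-(\calL^\eps_{<}-\sigma^2_\eps\Delta_{\mathbb{S}^2})b$; the last is again an $\eps^{2-\alpha_M/2}E_\infty(b)$ contribution by Lemma \ref{lemma:approx_sig}, producing the second $\eps^{2-\alpha_M/2}\,T\,E_\infty(b)$ and hence the factor $2T$.

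The main obstacle is showing that the two freezing errors contribute only $O(h)$ in total. This is the genuine weak estimate: on a subinterval of length $\ell\le h$ the increments $\mathfrak{K}_{h,\eps}(t)-\hat K_{n,m}$ and $\mathfrak{X}_{h,\eps}(t)-X_{n,m}$ have martingale parts of size $O(\sqrt\ell)$, so a pathwise bound would only give $O(\sqrt h)$. To recover $O(h)$ one must Taylor-expand $b$ around the left endpoint of each subinterval and exploit that odd moments of the Gaussian increments vanish while the even ones are $O(\ell)$, so that the first surviving terms are $O(\ell^2)$ per step; summing over the $O(T/h)$ steps then gives $O(h)$. Carrying the expansion far enough to witness this cancellation is what forces derivatives of $b$ up to order $4$, and the resulting $\eps,h$-independent constant is collected into $F_1(b)$. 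Two technical points need care here: the renormalization $\hat K_{n,m+1}=K_{n,m+1}/|K_{n,m+1}|$, which keeps the scheme on the sphere but is absent from the clean SDE and must be expanded using $\E[|K_{n,m}|^2]=1$; and the identity \fref{expDS} relating the Euclidean Laplacian of $\check b$ to $\Delta_{\mathbb{S}^2}b$, needed to match $\mathfrak{L}_{n,m}$ against $\sigma^2_\eps\Delta_{\mathbb{S}^2}$. Collecting the two $\eps$-contributions and the single $h$-contribution yields the stated bound.
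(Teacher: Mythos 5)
Your proposal matches the paper's proof in all essentials: the same pivot through the backward solution $b$ of the original RTE, It\^o/Dynkin identities for both $\tilde D_\eps$ and the interpolated discrete process $\mathfrak{D}_{h,\eps}$, two applications of Lemma \ref{lemma:approx_sig} (in its $L^\infty$ form) producing the factor $2T$, and a Talay--Tubaro second-order expansion (expectation killing the martingale parts) to upgrade the coefficient-freezing errors from $O(\sqrt h)$ to $O(h)$, which is exactly what forces fourth derivatives of $b$ into $F_1(b)$, together with the same two technical ingredients (control of $|\mathcal{K}_{n,m}|$ under renormalization and the identity \fref{expDS}). The only cosmetic difference is bookkeeping: the paper applies the backward equation at the frozen grid points $\mathfrak{D}_{h,\eps}(T_{n,m})$, which makes the large-jump operator contribute its own $O(h)$ freezing term ($E^{(7)}$), whereas you cancel $\calL^\eps_>$ at the continuous-time normalized point; both organizations yield the same estimate.
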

The notation $\E_{(x,\hat k)}$ above indicates that the process under the expectation starts at the point $(x,\hat k)$.
\begin{proof}
The proof consists in analyzing the discretization error of the diffusion process in a weak sense following the ideas of \cite{talay}.

We first write
\[
E_{h,\eps} := \E_z[f(D_{h,\eps} (T))] -  \E_z[f(\tilde D_\eps (T))]  = \E_z[b(T,D_{h,\eps} (T))] -  \E_z[b(T,\tilde D_\eps (T))],\]
with $z=(x,\hat k)$. Using the It\^o formula, see e.g.  \cite[Th 4.4.7 p. 226]{Applebaum}, together with the expression of $\Delta_{\mathbb{S}^2}$ given in \fref{expDS}, we have
\[\begin{split}
\E_z[ b(T,\tilde{D}_{\eps} (T))] -  b(0,z) = \E_z\Big[\int_0^T  \partial_t b(t,\tilde{D}_{\eps} (t)) & + \tilde K_{\eps}(t)\cdot \nabla_x b(t,\tilde{D}_{\eps} (t)) \\
& + \sigma^2_{\eps}(\tilde X_\eps(t)) \Delta_{\mathbb{S}^2} b(t,\tilde {D}_{\eps} (t)) + \calI^\eps_{>}(t) dt
\Big],
\end{split}\] 
where
\[
\calI^\eps_>(t) = \frac{\lambda(\tilde X_{\eps}(t))}{2^{1+\alpha(\tilde X_{\eps}(t))/2}} \int_{S^\eps_>} \rho(\tilde X_{\eps}(t), \tilde K_{\eps}(t) \cdot \hat p)( b(t,\tilde X_{\eps}(t),\hat p) - b(t,\tilde X_{\eps}(t),\tilde K_{\eps}(t))\sigma(d\hat p).
\]
Using the fact that $b$ satisfies \eqref{eq:back_RTE}, we find
\[
\E_z[ b(T,\tilde{D}_{\eps} (T))]  = b(0,z)+ \E_z\Big[\int_0^T  \sigma^2_{\eps}(\tilde X_\eps(t))\Delta_{\mathbb{S}^2} b(t,\tilde{D}_{\eps} (t)) - \calI^\eps_<(t)    dt \Big],
\]
where $\calI^\eps_<$ is as $\calI^\eps_>$ with $S^\eps_>$ replaced by $S^\eps_<$.
Following the lines of the proof of Lemma \ref{lemma:approx_sig}, we obtain
\[
|\E[ b(T,\tilde{D}_{\eps} (T))] - b(0,z) | \leq \eps^{2-\alpha_M/2} \, T \, E_\infty(b),
\]
where $E_\infty(b)$ is defined as in \fref{eq:def_C_eps} with $L^2$ norms replaced by $L^\infty$ norms. We move now to the term $\E_z[b(T,D_{h,\eps} (T))]$ which requires more work. Decomposing the interval $[0,T]$ according to the grid $(T_{n,m})$, we have
\begin{align*}
b(T, {D}_{h,\eps}(T)) - b(0,z) & = \check b(T, {D}_{h,\eps}(T)) - \check b(0,z) \\
& = \sum_{n\geq 0} \sum_{m\geq 0} \check b(T_{n,m+1}\wedge T,{D}_{h,\eps}(T_{n,m+1}\wedge T)) - \check b(T_{n,m}\wedge T,{D}_{h,\eps}(T_{n,m}\wedge T)) \\
&  \hspace{1cm}+ \check b(T_{n,m}\wedge T, {D}_{h,\eps}(T_{n,m}\wedge T)) - \check b(T_{n,m}\wedge T, {D}_{h,\eps}(T_{n,m}\wedge T^-)) \\
&=: B_1 + B_2,
\end{align*}
with obvious notations and where $B_1$ is meant to capture the dynamic between jumps while $B_2$ captures that at the jumps. The double sum and the $T_{n,m} \wedge T$ are only here to simplify the proof. Note that in order to define the sum for all $m \geq 0$, we set $T_{n,m}=T$ for $m>m_n$, and note also that there is only a finite number of terms in the sums. We are then led to estimate the differences in $B_1$ and $B_2$ for which we will use the process $\mathfrak{D}_{h,\eps}$. We introduced $\check b$ since $\mathfrak{K}_{h,\eps}$ is not necessarily on the sphere between the grid points. Since $T$ is on the grid, we have by definition $D_{h,\eps} (T_{n,m} \wedge T)=\mathfrak{D}_{h,\eps}(T_{n,m} \wedge T)$.
Consider now the notation \[
\hat{\mathfrak{D}}_{\eps,h} := (\mathfrak{X}_{\eps,h}, \hat{\mathfrak{K}}_{\eps,h})
\qquad\text{with}\qquad \hat{\mathfrak{K}}_{\eps,h} := \frac{\mathfrak{K}_{\eps,h}}{|\mathfrak{K}_{\eps,h}|}.
\]
By construction, the process $\hat{\mathfrak{D}}_{h,\eps}$ is continuous at the times $T_{n,m}$ that do not correspond to jump times, so that 
\[
\check b(T_{n,m}, \mathfrak{D}_{h,\eps}(T_{n,m}))=b(T_{n,m}, \hat{\mathfrak{D}}_{h,\eps}(T_{n,m}))=b(T_{n,m}, \hat{\mathfrak{D}}_{h,\eps}(T_{n,m}^-))=\check b(T_{n,m}, \mathfrak{D}_{h,\eps}(T_{n,m}^-))
\]
for those $T_{n,m}$. As a consequence, $B_2$ indeed only accounts for jumps. We will then estimate $B_1$ using the It\^o formula for $\mathfrak{D}_{h,\eps}$ between $T_{n,m}$ and $T_{n,m+1}$, and the properties of Poisson random measures for $B_2$. For the latter, we notice that we have by construction $D_{h,\eps}(\bar T^-_{n})=\hat {\mathfrak{D}}_{h,\eps}(\bar T^-_{n})$. 
Using then the random Poisson measure $P$ with intensity measure $\mu$ introduced in \fref{SDE_jump} and \fref{eq:def_int_poisson}, we can write
\[
B_2 = \int_0^T \big( (\check b(t, \mathfrak{D}_{h,\eps}(t^-)) + \bar R(\theta,\varphi, \hat{\mathfrak{D}}_{h,\eps}(t^-),v) ) - \check b(t, \mathfrak{D}_{h,\eps}(t^-)) \big)  P(dt,d\theta,d\varphi,dv),
\]
so that, together with the fact that $P-\mu$ is a measure-valued martingale, see e.g. \cite[Chapter 2]{Applebaum},
\[
\E_z[B_2]= \sum_n \sum_m \E_z\Big[\int_{T_{n,m}\wedge T}^{T_{n,m+1}\wedge T} \tilde \calI^\eps_>(t)dt \Big]
\]
with
\[
\tilde \calI^\eps_>(t) = \frac{\lambda(\mathfrak{X}_{h,\eps}(t))}{2^{1+\alpha(\mathfrak{X}_{h,\eps}(t))/2}} \int_{S^\eps_>} \rho(\mathfrak{X}_{h,\eps}(t), \hat{\mathfrak{K}}_{h,\eps}(t)\cdot \hat p)( \check b(t,\mathfrak{X}_{h,\eps}(t),\hat p) - \check b(t,\mathfrak{X}_{h,\eps}(t),\mathfrak{K}_{h,\eps}(t))\sigma(d\hat p).
\]
%
%
%
For $B_1$, we have from the It\^o formula 
\[
\E_z[B_1] = \sum_n \sum_m \E_z\Big[\int_{T_{n,m}\wedge T}^{T_{n,m+1}\wedge T}  \partial_t \check b(t,\mathfrak{D}_{h,\eps} (t))  + \mathfrak{K}_{h,\eps}(T_{n,m})\cdot \nabla_x \check b(t,\mathfrak{D}_{h,\eps} (t)) + \sigma^2_{\eps}(\mathfrak{X}_{h,\eps}(T_{n,m})) \calB^\eps_{n,m}(t) \, dt \Big]
\]
where
\[
\calB^\eps_{n,m}(t) = \Delta_k  \check b(t,\mathfrak{D}_{h,\eps} (t)) - \mathfrak{K}_{h,\eps} (T_{n,m})^T D^2_k \check b(t,\mathfrak{D}_{h,\eps} (t))\mathfrak{K}_{h,\eps} (T_{n,m}) -2 \mathfrak{K}_{h,\eps} (T_{n,m}) \cdot \nabla_k \check b(t,\mathfrak{D}_{h,\eps} (t)).  
\] 
As a result,
\begin{align*}
\E_z[B_1 + B_2] = \sum_n \sum_m \E_z\Big[\int_{T_{n,m}\wedge T}^{T_{n,m+1}\wedge T}  & \partial_t \check b(t,\mathfrak{D}_{h,\eps} (t))  + \mathfrak{K}_{h,\eps}(T_{n,m})\cdot \nabla_x \check b(t,\mathfrak{D}_{h,\eps} (t)) \\
&+ \sigma^2_{\eps}(\mathfrak{X}_{h,\eps}(T_{n,m})) \calB^\eps_{n,m}(t) + \tilde \calI^\eps_>(\mathfrak{D}_{h,\eps}(t))\, dt \Big].
\end{align*}
Using again the fact that $b$ satisfies \eqref{eq:back_RTE}, we have
\[
\partial_t b(t,\mathfrak{D}_{h,\eps} (T_{n,m}))  + \mathfrak{K}_{h,\eps}(T_{n,m})\cdot \nabla_x  b(t,\mathfrak{D}_{h,\eps} (T_{n,m})) + \calQ b(t,\mathfrak{D}_{h,\eps} (T_{n,m})) = 0,
\]
which also holds true for $\check b$ since the variable $\mathfrak{K}_{h,\eps}(T_{n,m})$ has norm $1$ at the grid points $T_{n,m}$. As a consequence,
\[
\E_z[b(T,D_{h,\eps} (T))] - b(0,z) = \sum_n \sum_m \sum_{j=1}^7 E^{(j)}_{n,m}
\]
where
\begin{align*}
E^{(1)}_{n,m} & := \E_z\Big[\int_{T_{n,m}\wedge T}^{T_{n,m+1}\wedge T}   \partial_t \check b(t,\mathfrak{D}_{h,\eps} (t)) - \partial_t  \check b(t,\mathfrak{D}_{h,\eps} (T_{n,m})) \, dt \Big]\\
E^{(2)}_{n,m} & :=\E_z\Big[\int_{T_{n,m}\wedge T}^{T_{n,m+1}\wedge T} \mathfrak{K}_{h,\eps}(T_{n,m})  \cdot (\nabla_x  \check b(t,\mathfrak{D}_{h,\eps} (t)) - \nabla_x \check b(t,\mathfrak{D}_{h,\eps} (T_{n,m})) ) \, dt \Big]\\
E^{(3)}_{n,m} & := \E_z\Big[\int_{T_{n,m}\wedge T}^{T_{n,m+1}\wedge T} \sigma^2_{\eps}(\mathfrak{X}_{h,\eps}(T_{n,m}))\, (\Delta_k \check b(t,\mathfrak{D}_{h,\eps} (t)) -\Delta_k \check b(t,\mathfrak{D}_{h,\eps} (T_{n,m})) ) \, dt\Big]\\
E^{(4)}_{n,m} & := \E_z\Big[\int_{T_{n,m}\wedge T}^{T_{n,m+1}\wedge T} - \sigma^2_{\eps}(\mathfrak{X}_{h,\eps}(T_{n,m}))\mathfrak{K}_{h,\eps} (T_{n,m})^T ( D^2_k \check b(t,\mathfrak{D}_{h,\eps} (t)) - D^2_k \check b(t,\mathfrak{D}_{h,\eps} (T_{n,m})) ) \mathfrak{K}_{h,\eps} (T_{n,m}) \, dt \Big]\\
E^{(5)}_{n,m} & :=\E_z\Big[\int_{T_{n,m}\wedge T}^{T_{n,m+1}\wedge T} -2 \, \sigma^2_{\eps}(\mathfrak{X}_{h,\eps}(T_{n,m}))\, \mathfrak{K}_{h,\eps} (T_{n,m}) \cdot (\nabla_k \check b(t,\mathfrak{D}_{h,\eps} (t)) - \nabla_k\check b(t,\mathfrak{D}_{h,\eps} (T_{n,m})) ) \, 
dt\Big]\\ 
E^{(6)}_{n,m} & := \E_z\Big[\int_{T_{n,m}\wedge T}^{T_{n,m+1}\wedge T}  \sigma^2_{\eps}(\mathfrak{X}_\eps(T_{n,m}))\Delta_{\mathbb{S}^2} b(t,\mathfrak{D}_{h, \eps} (T_{n,m})) - \tilde \calI^\eps_<(\mathfrak{D}_{h,\eps}(T_{n,m}))  \,  dt \Big]\\
E^{(7)}_{n,m} & := \E_z\Big[\int_{T_{n,m}\wedge T}^{T_{n,m+1}\wedge T}  \tilde \calI^\eps_>(\mathfrak{D}_{h,\eps}(t)) - \tilde \calI^\eps_>(\mathfrak{D}_{h,\eps}(T_{n,m}))  \,  dt \Big]
\end{align*}
with
\be \label{def:I_tilde}
\tilde \calI^\eps_<(\mathfrak{D}_{h,\eps}(t)) = \frac{\lambda(\mathfrak{X}_{h,\eps}(t))}{2^{1+\alpha(\mathfrak{X}_{h,\eps}(t))/2}} \int_{S^\eps_<} \rho(\mathfrak{X}_{h,\eps}(t), \hat{\mathfrak{K}}_{h,\eps}(t)\cdot \hat p)( \check b(t,\mathfrak{X}_{h,\eps}(t),\hat p) - \check b(t,\mathfrak{D}_{h,\eps}(t))\sigma(d\hat p).
\ee

In the estimates below involving $\check b$, derivatives involving $k\mapsto \hat k = k/|k|$ produce terms of the form $1/|k|^p$ for some $p>0$. These terms are due to the fact that $\mathfrak{K}_{n,m}$ does not stay on the sphere at all times. However, these terms can be bounded uniformly thanks to the following lemma. 

\begin{lemma}
We have for any $n \geq 0$ and $m \leq m_n$,
\[
\inf_{s\in [T_{n,m},T_{n,m}+h_{n,m})} |\mathcal{K}_{n,m}(s)| \geq (1-2h\sigma^2_{\eps,\infty}) \geq \frac{1}{2},
\]
for $h$ and $\eps$ small enough, and
\[
\sup_{s\in [T_{n,m},T_{n,m}+h_{n,m})} \E\big [|\mathcal{K}_{n,m}(s)|^2 \, | \, T_{n,m}, T_{n,m+1} \big] \leq 1 + 4 h \sigma^2_{\eps,\infty}\leq 2.
\]
\end{lemma}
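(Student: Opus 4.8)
The plan is to exploit the fact that, on each subinterval $[T_{n,m},T_{n,m}+h_{n,m})$, the coefficients in \fref{SDE_jump2} are \emph{frozen} at the left endpoint. Writing $\sigma:=\sigma_\eps(X_{n,m})$ and $\hat K:=\hat K_{n,m}$ (a unit vector by construction), both are constant and $\mathcal{F}_{T_{n,m}}$-measurable on the interval, so the stochastic integrals collapse to elementary expressions. With $\tau:=t-T_{n,m}$ and $B(t):=W(t)-W(T_{n,m})$, and using the drift coefficient $\sigma^2$ consistent with \fref{diff_eq} and \fref{def:scheme}, this gives
\[
\mathcal{K}_{n,m}(t) = (1-2\sigma^2\tau)\,\hat K + \sqrt{2}\,\sigma\,\hat K\times B(t).
\]
The key structural observation is that $\hat K\times B(t)$ is orthogonal to $\hat K$; since $|\hat K|=1$, the two contributions are orthogonal and the cross term vanishes, so that
\[
|\mathcal{K}_{n,m}(t)|^2 = (1-2\sigma^2\tau)^2 + 2\sigma^2\,|\hat K\times B(t)|^2 .
\]

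For the pathwise lower bound I would simply drop the nonnegative second term to obtain $|\mathcal{K}_{n,m}(t)|\geq |1-2\sigma^2\tau|$. Since $\tau<h_{n,m}\leq h$ and $\sigma^2\leq\sigma^2_{\eps,\infty}=\sup_x\sigma_\eps^2(x)$, the standing assumption $4h\sigma^2_{\eps,\infty}\leq1$ yields $2\sigma^2\tau\leq 2h\sigma^2_{\eps,\infty}\leq 1/2<1$, hence $1-2\sigma^2\tau\geq 1-2h\sigma^2_{\eps,\infty}\geq 1/2>0$ and the absolute value can be dropped. This proves the first inequality uniformly in $s\in[T_{n,m},T_{n,m}+h_{n,m})$.

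For the second moment I would take the conditional expectation of the identity above. Using $|\hat K\times B(t)|^2=|B(t)|^2-(\hat K\cdot B(t))^2$ with $\E[|B(t)|^2]=3\tau$ and $\E[(\hat K\cdot B(t))^2]=\tau$ (as $\hat K$ is a unit vector and $B$ a standard $3$D increment), one gets $\E[|\hat K\times B(t)|^2\mid\cdots]=2\tau$, and therefore
\[
\E\big[|\mathcal{K}_{n,m}(t)|^2\,\big|\,T_{n,m},T_{n,m+1}\big] = (1-2\sigma^2\tau)^2 + 4\sigma^2\tau = 1 + 4\sigma^4\tau^2,
\]
the linear-in-$\tau$ terms cancelling exactly. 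Bounding $4\sigma^4\tau^2\leq 4\sigma^4_{\eps,\infty}h^2=(4h\sigma^2_{\eps,\infty})\,\sigma^2_{\eps,\infty}h\leq 4h\sigma^2_{\eps,\infty}$, and using $4h\sigma^2_{\eps,\infty}\leq1$ once more, gives $\E[|\mathcal{K}_{n,m}(t)|^2\mid\cdots]\leq 1+4h\sigma^2_{\eps,\infty}\leq2$, which is the second claim.

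The only point requiring care, rather than a genuine obstacle, is the conditioning: I must argue that, given $T_{n,m}$ and $T_{n,m+1}$ (equivalently the grid), the increment $B(t)=W(t)-W(T_{n,m})$ is still a centered Gaussian with covariance $\tau I_3$ independent of $\mathcal{F}_{T_{n,m}}$, while $\sigma$ and $\hat K$ are measurable with respect to that past. This is legitimate because the (fictitious) jump times are built from an i.i.d. exponential family independent of the driving Brownian motion $W$, so freezing the coefficients at $T_{n,m}$ and treating $B$ as an independent Brownian increment is justified. Everything else is the elementary computation above, and I expect no serious difficulty beyond this measurability bookkeeping.
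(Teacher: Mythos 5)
Your proof is correct and follows essentially the same route as the paper: solve the frozen-coefficient SDE explicitly on each subinterval, split $\mathcal{K}_{n,m}$ into its component along $\hat K_{n,m}$ and the orthogonal cross-product term, drop the latter for the pathwise lower bound, and use independence of the Brownian increments from the (fictitious) jump times for the conditional second moment. Your version is in fact slightly sharper, since you evaluate $\E\big[|\hat K\times B(t)|^2\big]=2\tau$ exactly and obtain the identity $1+4\sigma^4\tau^2$ (whereas the paper bounds the two contributions separately), but this is a refinement of the same argument rather than a different approach.
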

 
\begin{proof}
For $s \in [T_{n,m},T_{n,m}+h_{n,m}]$, $\mathcal{K}_{n,m}$ can be rewritten as the sum of two orthogonal components
\[
\mathcal{K}_{n,m}(s)  = (1 -2 (t-T_{n,m}) \sigma^2_{\eps}(X_{n,m}) ) \hat K_{n,m} + \sqrt{2} \sigma_{\eps}(X_{n,m}) \, \hat K_{n, m} \times (W(s)-W(T_{n,m}))
\]
so that
\[
|\mathcal{K}_{n,m}(s)|^2 \geq (1 - 2 h \sigma^2_{\infty,\eps})^2.
\]
Now for the upper bound, using that $W_n$ and $P$ are independent, we have
\begin{align*}
\E\big [ |\mathcal{K}_{n,m}(s)|^2 \, | \, T_{n,m}, T_{n,m+1} \big] & \leq (1 + 2 h \sigma^2_{\eps,\infty} ) + 2 \sigma^2_{\eps,\infty} \,\E\big [   |W(s)-W(T_{n,m})|^2\, | \, T_{n,m}, T_{n,m+1} \big] \\
& \leq  1 + 4 h \sigma^2_{\eps,\infty}\\
&\leq 2,
\end{align*}
where we used that $4 h \sigma^2_{\eps,\infty} \leq 1$. This concludes the proof. $\square$ 
\end{proof} 

\paragraph{For $E^{(1)}_{n,m}$.} Using the It\^o formula between $T_{n,m}$ and $T_{n,m+1}$, we find
\begin{align*}
\E\Big[\partial_t \check b(t,\mathfrak{D}_{h,\eps} (t)) & - \partial_t \check b(t,\mathfrak{D}_{h,\eps} (T_{n,m}))\,\Big|\,T_{n,m}, T_{n,m+1}\Big] \\
& = \int_{T_{n,m}}^t  \mathfrak{K}_{h,\eps} (T_{n,m})\cdot\nabla_x  \partial_t  \check b(t,\mathfrak{D}_{h,\eps} (s)) \\
& \;+ \sigma^2_{\eps}(\mathfrak{X}_{h,\eps}(T_{n,m})) \Big(\Delta_k  \partial_t \check b(t,\mathfrak{D}_{h,\eps} (s)) - \mathfrak{K}_{h,\eps} (T_{n,m})^T D^2_k \partial_t \check b(t,\mathfrak{D}_{h,\eps} (s))\mathfrak{K}_{h,\eps} (T_{n,m}) \\
& \;-2 \mathfrak{K}_{h,\eps} (T_{n,m}) \cdot \nabla_k \partial_t \check b(t,\mathfrak{D}_{h,\eps} (s))\Big) ds,
\end{align*}
so that
\[
|E^{(1)}_{n,m}| \leq h \, \Big( \|D^2_{t,x} \check b\|_\infty + 2 \sigma_{\eps,\infty}^2(\|D^3_{t,k,k} \check b\|_\infty +  \| D^2_{t,k} \check b\|_\infty) \Big),
\]
where $
\sigma_{\infty,\eps} = \sup_x \sigma_{\eps}(x).
$
When $\eps \leq \eps_0$, we have $\sigma_{\infty,\eps}\leq \sigma_{\infty,\eps_0}$ since $r_\eps'$ is an increasing function of $\eps$. As a result, we obtain
\[
\sum_n\sum_m |E^{(1)}_{n,m}|  \leq h \, T \, \Big( \|D^2_{t,x} \check b\|_\infty + 2 \sigma_{\infty,\eps_0}^2(\|D^3_{t,k,k} \check b\|_\infty +  \| D^2_{t,k} \check b\|_\infty) \Big)=:h F_2(b).
\]

\paragraph{For $E^{(j)}_{n,m}$ with $j=2,\dots,5$.}

Following the same lines as above, we have
\begin{align*}
\sum_n\sum_m \sum_{j=2}^5 |E^{(j)}_{n,m}|  & \leq 2 h \, T \,  \Big( \|D^2_{x,x} \check b\|_\infty +2 \sigma_{\infty,\eps_0}^2(\|D^3_{x,k,k} \check b\|_\infty + \|D^2_{x,k} \check b\|_\infty) \Big) \\
& + 4 h \, T \, \sigma_{\infty,\eps_0}^2 \Big( \|D^2_{x,k} \check b\|_\infty+  \|D^3_{x,k,k} \check b\|_\infty +2 \sigma_{\infty,\eps_0}^2(2\|D^3_{k,k,k} \check b\|_\infty + \|D^4_{k,k,k,k} \check b\|_\infty+ \|D^2_{k,k} \check b\|_\infty) \Big)\\
&=: h F_3(b).
\end{align*} 

\paragraph{For $E^{(6)}_{n,m}$.}
For this term, we follow the proof of Lemma \ref{lemma:approx_sig} and find 
\[
\sum_n\sum_m|E^{(6)}_{n,m}| \leq T \, \eps^{2-\alpha_M/2} \, E_\infty(b),
\]
where $E_\infty(b)$ is defined as before.

\paragraph{For $E^{(7)}_{n,m}$.} 

Starting from \eqref{def:I_tilde}, we have
\begin{align*}
\tilde \calI^\eps_>(\mathfrak{D}_{h,\eps}(t)) & = \frac{\lambda(\mathfrak{X}_{h,\eps}(t))}{2^{1+\alpha(\mathfrak{X}_{h,\eps}(t))/2}} \int_{S^\eps_>} \rho(\mathfrak{X}_{h,\eps}(t), \hat{\mathfrak{K}}_{h,\eps}(t)\cdot \hat p)( \check b(t,\mathfrak{X}_{h,\eps}(t),\hat p) - \check b(t,\mathfrak{X}_{h,\eps}(t),\hat{ \mathfrak{K}}_{h,\eps}(t))\sigma(d\hat p)\\
& = \frac{\lambda(\mathfrak{X}_{h,\eps}(t))}{2^{1+\alpha(\mathfrak{X}_{h,\eps}(t))/2}} \int_0^{2\pi} d\varphi \int_{-1}^{1-\eps} ds\, \rho(\mathfrak{X}_{h,\eps}(t), s)\\
&\times \Big( \check b\big(t,\mathfrak{X}_{h,\eps}(t), s\hat{\mathfrak{K}}_{h,\eps}(t) + \sqrt{1-s^2}\, \calG(\varphi,\hat{\mathfrak{K}}_{h,\eps}(t)) \big) - \check b\big(t,\mathfrak{X}_{h,\eps}(t),\hat{ \mathfrak{K}}_{h,\eps}(t) \big) \Big) 
\end{align*}
where the last line is obtained by changing to spherical coordinates with $s=\cos(\theta)$, and 
\[
\calG(\varphi, k) := \cos(\varphi)\hat k^\perp_1 + \sin(\varphi)\hat k^\perp_2.
\]
Above, $(\hat k^\perp_1, \hat k^\perp_2)$ forms an orthonormal basis of the plane $\hat k^\perp$. Note that the choice of $(\hat k^\perp_1, \hat k^\perp_2)$ does not play any role since the variable $\varphi$ is integrated. Now, writing
\begin{align*}
\tilde \calI^\eps_>(\mathfrak{D}_{h,\eps}(t))   = \frac{\lambda(\mathfrak{X}_{h,\eps}(t))}{2^{1+\alpha(\mathfrak{X}_{h,\eps}(t))/2}} & \int_0^{2\pi} d\varphi \int_{-1}^{1-\eps} ds\int_0^1  dv\, \rho(\mathfrak{X}_{h,\eps}(t), s)\\
&\times \big((s-1)\hat{\mathfrak{K}}_{h,\eps}(t) + \sqrt{1-s^2}\,\calG(\varphi, \hat{\mathfrak{K}}_{h,\eps}(t)) \big)\\
& \cdot \nabla_k \check b \big(t,\mathfrak{X}_{h,\eps}(t), (1+v(s-1))\hat{\mathfrak{K}}_{h,\eps}(t) + v\sqrt{1-s^2}\calG(\varphi, \hat{\mathfrak{K}}_{h,\eps}(t)) \big),
\end{align*}
and using that $\calG(\varphi + \pi, k)=-\calG(\varphi, k)$, we just have to focus on
\begin{align*}
\tilde \calI^\eps_>(\mathfrak{D}_{h,\eps}(t))   = \frac{\lambda(\mathfrak{X}_{h,\eps}(t))}{2^{1+\alpha(\mathfrak{X}_{h,\eps}(t))/2}} & \int_0^{2\pi} d\varphi \int_{-1}^{1-\eps} ds\int_0^1  dv\, \rho(\mathfrak{X}_{h,\eps}(t), s) (s-1)\hat{\mathfrak{K}}_{h,\eps}(t)\\
& \cdot \nabla_k \check b \big(t,\mathfrak{X}_{h,\eps}(t), (1+v(s-1))\hat{\mathfrak{K}}_{h,\eps}(t) + v\sqrt{1-s^2}\, \calG(\varphi, \hat{\mathfrak{K}}_{h,\eps}(t)) \big).
\end{align*}
Before applying the It\^o formula to this term, we rewrite $\calG$ as
\[
\calG(\varphi,\hat k) = \Big(I_3 + \sin(\varphi)Q(\hat k) + (1-\cos(\varphi))Q^2(\hat k)\Big)\calH_1(\hat k)
\]
where $I_3$ is the $3\times 3$ identity matrix, $Q$ is defined by \eqref{def:Q_mat}, and where 
\[
\calH_1(\hat k) := \frac{1}{\sqrt{\hat k_1^2 + \hat k_2^2}}
\begin{pmatrix} 
\hat k_2\\
-\hat k_1\\
0
\end{pmatrix} = \frac{1}{\sqrt{k_1^2 + k_2^2}}
\begin{pmatrix} 
k_2\\
-k_1\\
0
\end{pmatrix},
\]
which is orthogonal to $\hat k = k/|k|$. 
In fact, $\calG(\varphi,\hat{k})$ corresponds to the rotation of $\calH_1(\hat k)\in\hat k^\perp$ with angle $\varphi$ and axis $\hat k$. This choice simplifies calculations. 
Now, note that
\begin{align*}
 \int_{-1}^{1-\eps}  \rho(x, s) (s-1) ds & \leq \int_{-1}^{1} \rho(x, s) (s-1) \leq \|a\|_\infty \frac{2^{1-\eta_m/2}}{1-\eta_M/2},
\end{align*}
so that $E^{(7)}_{n,m}$ does not depends on $\eps$. Applying the It\^o formula, we obtain 
\begin{align*}
\sum_{n}\sum_{m} |E^{(7)}_{n,m}| \leq & h T \, (1+(1+2^7 \cdot 3^3)\sigma^2_{\infty,\eps_0}) \, \|a\|_\infty \frac{2^{1-\eta_m/2}}{1-\eta_M/2}\Big(1+\frac{1}{1-\eta_M/2} \Big)\\
& \times  \big(\|\nabla_x \lambda\|_\infty + \|\lambda\|_\infty + \|\nabla_x \alpha\|_\infty  \big)\big( \|D^1_k \check b\|_\infty + \|D^2_{k,k} \check b\|_\infty \big) =: h F_4(b)
\end{align*}
Setting finally $F_1:=F_2+F_3+F_4$ and gathering all previous results concludes the proof of Proposition \ref{convE}. $\blacksquare$
\end{proof}

\subsubsection{Step 3 and conclusion}

We remark first that the error bound in Proposition \ref{convE} does not depend on the starting point $(x,\hat k)$. Then, from this \emph{pointwise} result, we find
\[
\big| \E_{\mu_0}[f(D_{h,\eps} (T))] -  \E_{\mu_0}[f(\tilde D_\eps (T))] \big| \leq \int_{\mathbb{R}^3 \times \mathbb{S}^2} \big| \E_{x,\hat k}[f(D_{h,\eps} (T))] -  \E_{x,\hat k}[f(\tilde D_\eps (T))] \big| \mu_0(dx, d\hat k),
\]
where $\mu_0$ is the probability measure given by \eqref{eq:def_mu0}. Let now
\[
\mu(t,f) = \int_{\mathbb{R}^3\times \mathbb{S}^2} u(t, x, \hat k) f(x,\hat k )\,dx\,\sigma(d\hat k)\qquad\text{and}\qquad \mu_{h,\eps}(t,f) = \E_{\mu_0}[f( D_{h,\eps}(t))].
\]
Using Propositions \ref{prop:approx_RTE} and \ref{convE}, and that $\int_{\mathbb{R}^3\times \mathbb{S}^2} u_\eps(T, x, \hat k) f(x,\hat k )\,dx\,\sigma(d\hat k)= \E_{\mu_0}[f(D_\eps (T))]=\E_{\mu_0}[f(\tilde D_\eps (T))]$ according to Lemma \ref{lemdis}, we have
\[
|\mu_{h,\eps}(T,f) - \mu(T,f)| \leq \eps^{2-\alpha_M/2} \,\sqrt{2T} E(u) \|f\|_{L^2(\R^3 \times \bbS^2)} + \eps^{2-\alpha_M/2} \,2T E_\infty(b) + h F_1(b).
\]
In order to end the proof of Theorem \ref{mainth}, it suffices to remark now that
\[
|\mu_{N, h, \eps}(T,f)-\mu(T,f)| \leq |\mu_{N, h, \eps}(T,f)-\mu_{h,\eps}(T,f)|+|\mu_{h,\eps}(T,f)-\mu(T,f)|,
\]
so that
\[\begin{split}
\mathbb{P}\Big(& |\mu_{N, h, \eps}(T,f)-\mu(T,f)|  > \frac{\eta \Sigma_{h,\eps}}{ \sqrt{N}} + \eps^{2-\alpha_M/2} F_0(u,b,f)+ \eps^{2-\alpha_M/2} \,2T E_\infty(b)+ h \, F_{1}(b)\Big) \\
& \leq \mathbb{P}\Big(|\mu_{N, h, \eps}(T,f)-\mu_{h,\eps}(T,f)|> \frac{\eta \Sigma_{h,\eps}}{ \sqrt{N}} + \underbrace{\eps^{2-\alpha_M/2} F_0(u,b,f) + h \,F_1(b)- |\mu_{h,\eps}(T,f)-\mu(T,f)|}_{\geq 0}\Big) \\ 
& \leq \mathbb{P}\Big(|\mu_{N, h, \eps}(T,f)-\mu_{h,\eps}(T,f)|> \frac{\eta \Sigma_{h,\eps}}{ \sqrt{N}}\Big)
\end{split}\]
where
\[
F_0(u,b,f):=\sqrt{2T} E(u) \|f\|_{L^2(\R^3 \times \bbS^2)} + 2T E_\infty(u).
\]
We conclude by applying the central limit theorem \cite{Feller} together with the Portmanteau theorem \cite[Theorem 2.1 pp.16]{Billingsley}. $\blacksquare$

\section{Conclusion}

We have derived an efficient MC method for the resolution of the RTE with non-integrable scattering kernels. It is based on a small jumps/large jumps decomposition that allows us to simulate the small jumps part at a low cost by solving a standard SDE. The large jumps are obtained by using the stochastic collocation technique with a candidate distribution function that captures the singular behavior of the kernel.
We have moreover demonstrated the necessity to include the small jumps component in order to obtain a good accuracy at a manageable computational cost, and investigated practical situations in optical tomography and atmospheric turbulence where the singular RTE is of interest. We in particular highlighted the role of the singularity strength $\alpha$ on the qualitative behavior of the solution.

Future investigations include the estimation of the scattering kernel, with an emphasis on the parameter $\alpha$, from either simulated or experimental data obtained e.g. from light propagation in biological tissues. This problem is of practical interest in biomedical applications and will require the development of appropriate inverse techniques.

\appendix

\section{Stochastic collocation}\label{sec_sto_col}

In this section, we describe the stochastic collocation method, see e.g. \cite{grzelak}, and consider the situation of Section \ref{sec:NKt} as an illustration. The goal is to simulate a real-valued random variable $W$ (for which direct simulation is not possible or too costly) from an auxiliary variable $V$ that can be generated efficiently. In our context, we want to simulate $W$ with probability density function (PDF)
\[
f_W(w) := \frac{a(\sqrt{2w})}{C_W w^{1+\alpha/2}} \mathbf{1}_{(\eps,2)}(w),
\]
where $C_W$ is a normalization constant. As already noticed in Section \ref{sec:MC_algo}, a direct method is available when  $a\equiv 1$. Therefore, we take $V$ with PDF 
\[
f_V(v) := \frac{1}{C_V v^{1+\alpha/2}} \mathbf{1}_{(\eps,2)}(v),
\]
that can be simulated with
\[V = F^{-1}_V(U) = \eps(1- (1 - (\eps /2 )^{\alpha/2}) U )^{-2/\alpha},\]
where $U\sim\calU(0,1)$ and where $F_V$ is the cumulative distribution function (CDF) of $V$. 
The stochastic collocation method is based on the following three observations. First, we have $F_V(V) \sim \calU(0,1)$. 
Second, denoting by $F_W$ the CDF of $W$, we note that $W$ can be (theoretically) simulated with
\[ F^{-1}_W ( U) = F^{-1}_W ( F_V(V)) =: G(V),\]
with $G = F^{-1}_W \circ F_V$ and $U=F_V(V)$.
Last, we only need to approximate $G$ and not $F^{-1}_W$, and with a good candidate $V$, $G$ behaves better than $F^{-1}_W$. In order to approximate $G$, we use Gauss polynomial interpolation and only need to invert $F_W$ at a small number of points. 

In our example, $V$ captures the "singular" behavior of $W$, and is as a consequence a good candidate. The function $G$ is then direct to approximate with just a few quadrature points for a reduced computational cost. Because $G$ needs only to be approximated over $(\eps,2)$, with known values at the extremes, we rather use a Gauss-Lobatto-Jacobi quadrature rule. In Figure \ref{fig:quad}, we illustrate the polynomial approximation of $G$, with 5 and 10 interpolation points for $\alpha=5/3$, $\eps=0.01$, and $a(r) = \exp(-r^2/(2\times 0.8^2)).$ Because of our choice for $V$, one can observe that the overall behavior of the PDF $f_W$ is well captured with just 5 quadrature points, even for strongly singular kernels with $\alpha=5/3$. However, the fast decay of the function $a$, which is the main source of error between $F_W$ and $F_V$, requires more quadrature points for an accurate approximation and 10 points seem sufficient. 

\begin{figure}[h!]
\begin{center}
\includegraphics[scale=0.35]{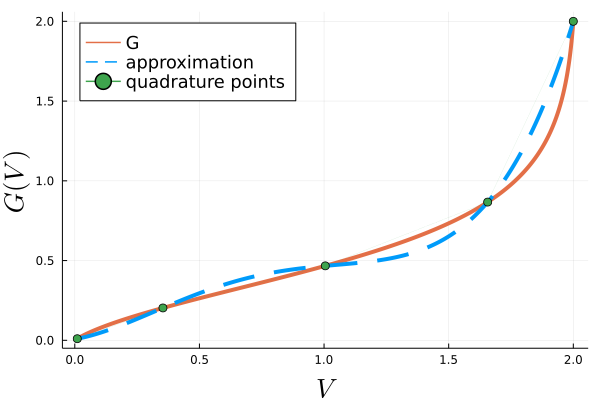}
\includegraphics[scale=0.35]{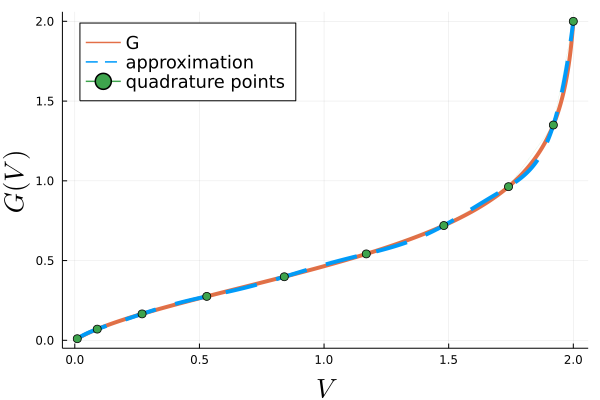} 
\end{center}
\caption{\label{fig:quad} Illustration of the polynomial approximation to $G$ with 5 (left picture) and 10 (right picture) interpolations points. We use the library Jacobi.jl to compute these quadrature points.}
\end{figure}

\small{
}

\end{document}